\newcommand{\E}{E}
\newcommand{\Q}{{\mathbb Q}}
\newcommand{\bQ}{\overline{{\mathbb Q}}}
\newcommand{\R}{{\mathbb R}}
\newcommand{\Z}{{\mathbb Z}}
\newcommand{\C}{{\mathbb C}}
\newcommand{\A}{{\mathbb A}}
\newcommand{\G}{{\mathbb G}}
\newcommand{\M}{{\mathbb M}}
\newcommand{\fr}{{\mathfrak r}}
\newcommand{\fs}{{\mathfrak s}}
\newcommand{\fS}{\mathsf{s}}
\newcommand{\fT}{\mathsf{t}}
\newcommand{\fC}{\mathsf{c}}
\newcommand{\cF}{\mathcal{F}}
\newcommand{\cT}{\mathcal{T}}
\newcommand{\sF}{\mathsf{F}}
\newcommand{\GL}{{\rm GL}}
\newcommand{\OO}{{\rm O}}
\newcommand{\GO}{{\rm GO}}
\newcommand{\GSO}{{\rm GSO}}
\newcommand{\GSp}{{\rm GSp}}
\newcommand{\Sym}{{\rm Sym}}
\newcommand{\As}{{\rm As}}
\newcommand{\Hom}{{\rm Hom}}
\newcommand{\End}{{\rm End}}
\newcommand{\Gal}{{\rm Gal}}
\newcommand{\Res}{{\rm Res}}
\newcommand{\Spec}{{\rm Spec}}
\newcommand{\Aut}{{\rm Aut}}
\theoremstyle{plain}
\newtheorem{thm}[subsubsection]{Theorem}
\newtheorem{lem}[subsubsection]{Lemma}
\newtheorem{cor}[subsubsection]{Corollary}
\newtheorem{prop}[subsubsection]{Proposition}
\theoremstyle{definition}
\newtheorem{remark}[subsubsection]{Remark}
\newtheorem{Def}[subsubsection]{Definition}
\newtheorem{ex}[subsubsection]{Example}
\newtheorem{lem*}{Lemma}[subsubsection]
\newtheorem{prop*}{Proposition}[subsubsection]
\begin{document}

\title{\bf Motives, Periods, and Functoriality}

\author{\bf Pierre Deligne \ \ \& \ \ A. Raghuram}
\address{Institute for Advanced Study, 1 Einstein Drive, Princeton, NJ 08540, USA.}
\email{deligne@ias.edu}

\address{Dept.\,of Mathematics, Fordham University at Lincoln Center, New York, NY 10023, USA.}
\email{araghuram@fordham.edu}

\thanks{}
\date{\today}
\subjclass[2000]{Primary 11F67; Secondary 11G09, 22E55}

\begin{abstract}
Given a pure motive $M$ over $\Q$ with a multilinear algebraic structure $\fS$ on $M$, and given a representation $V$ of the group respecting $\fS$, we describe 
a functorial transfer $M^V$. We formulate a criterion that guarantees when the two periods of $M^V$ are equal. This has an implication for the critical values of the $L$-function attached to $M^V.$ The criterion is explicated in a variety of examples such as: tensor product motives and Rankin--Selberg $L$-functions; orthogonal motives and the standard $L$-function for even orthogonal groups; twisted tensor motives and Asai $L$-functions. 
\end{abstract}

\maketitle

{\tiny  \tableofcontents}

\section{Introduction}
\label{sec:intro}

The second author, in independent collaborations with G\"unter Harder, Chandrasheel Bhagwat, and Muthu Krishnamurthy, has found a number of cases where 
for some reductive group $G$ over a number field $F$, some cuspidal automorphic representation $\pi$ of $G/F$, and some finite-dimensional representations $\rho$ of the Langlands dual ${}^LG$ of $G$, the 
corresponding $L$-function $L(s, \pi, \rho) =: L(s)$ is such that for some values of $s$, the ratio $L(s)/L(s+1)$ is algebraic: 
\begin{itemize}
\item Rankin--Selberg $L$-functions for $\GL_n \times \GL_m$ over a totally real field \cite{harder-raghuram-book}; 
\item Rankin--Selberg $L$-functions for $\GL_n \times \GL_m$ over a totally imaginary field \cite{raghuram-imaginary}, or $L$-functions for 
$\GL_n$ over a CM field \cite{raghuram-CM}; 
\item Degree-$2n$ $L$-functions for $\GL_1 \times \OO(2n)$ over a totally real field \cite{bhagwat-raghuram};
\item Degree-$n^2$ Asai $L$-functions over $F$ attached to cuspidal representations of $\GL_n/E$; with $E/F$ a quadratic extension of totally real fields \cite{muthu-raghuram}. 
\end{itemize} 
In all cases, building on previous work of Harder \cite{harder-inventiones}, \cite{harder-tifr}, 
rationality results follow from a study of rank-one Eisenstein cohomology on an ambient reductive group having $G$ as a Levi quotient, giving a cohomological interpretation to some aspects of the Langlands--Shahidi method \cite{shahidi-book}.  

\smallskip

In all cases, there is conjecturally a motive $M$ such that, possibly after a shift $s' = s+a$, $L(s)$ is the $L$-function attached to $M$:
$$
L(s) \ = \ L(M, s')
$$
and the values $s$ and $s+1$ considered for $L(s)$ correspond to values $s'$ and $s'+1$ which are critical for $M$. 

\smallskip

In \cite{deligne}, the first author proposed a conjecture on the critical values $L(M, s)$ of motivic $L$-functions. The $L$-functions considered were without the $L$-factors at infinity. 
For a motive $M$ with coefficients in a number field $E$, the $L$-function takes values in $E \otimes \C$, and for $s$ critical for $M$, the conjecture determined 
$L(M, s)$ up to multiplication by an element of $E$. It was given by some determinant of periods $c^+(M(s))$ of the motive $M(s)$. Here, $s$ is always an integer, and 
$M(s)$ is a Tate-twist of $M$. There is another determinant of periods $c^-(M(s))$, such that $c^+(M(s+1)) = c^-(M(s))$ up to multiplication by some power of $2\pi i$. Reintroducing 
the $L$-factors at infinity also introduces powers of $\pi$ and $i.$   

\smallskip
The aim of the present work is to prove rationality results for some ratios $c^+(M)/c^-(M)$ and to explain how they {\it a posteriori} predict some of the results 
of the first author mentioned above. The interest is two-fold. In one direction, it gives partial confirmations of \cite[Conj.\,2.8]{deligne}. In the other direction, it has led 
to corrections to the results on special values of automorphic $L$-functions in the literature (see the commentary in \cite[Sect.\,6]{raghuram-pias}), 
and in the results of the first author (especially, see \cite[Sect.\,5.4]{raghuram-imaginary} and \ref{sec:tot-imag-field-comparison} below). It 
also suggested new cases to be investigated. 

\smallskip
In Sect.\,\ref{sec:periods-motives}, after reviewing the basics of a critical motive and its periods, we describe the realizations of a motive $M^V$ attached to an algebraic 
representation $V$ of the group of automorphisms of a realization of a motive $M$ that respects some given multilinear algebraic structure $\fS$ on $M.$ The assignment 
$M \mapsto M^V$ is to be construed as the motivic analogue of Langlands transfer.  
We then state and prove the main result on period relations; see Thm.\,\ref{thm:criterion-M-V}; the proof is inspired by the methods of Yoshida \cite{yoshida}.
In Sect.\,\ref{sec:examples} we discuss a variety of concrete examples relevant to the above mentioned automorphic $L$-functions; especially, see 
Prop.\,\ref{prop:tensor-product-even-even}, Thm.\,\ref{thm:c+/c-over_total_imag}, Prop.\,\ref{prop:tensor_L_E}, 
Thm.\,\ref{thm:asai-motives}, and Thm.\,\ref{thm:gsp-go}.

\medskip
\section{The main theorem on critical motives}
\label{sec:periods-motives}

The $L$-function of a motive over a number field $F$ coincides with the $L$-function of the motive over $\Q$ deduced from it by restriction of scalars. Because of
this, we will often consider only (pure) motives over $\Q.$ They will be thought in terms of corresponding system of realizations in Betti, 
de~Rham, and $\ell$-adic cohomologies, and the comparison isomorphisms. We will also consider motives with coefficients in a number field $\E$, that 
is given with $\E \to \mathrm{End}(M).$

\smallskip

Let $M$ be a motive with coefficients in $\E$. We will review what it means for $M$ to be critical, and we will discuss the definition of the periods 
$c^\pm(M) \in (\E \otimes \C)^*/\E^*$ attached by the first author (\cite{deligne}) to critical motives. We will exhibit cases where $c^+(M) = c^-(M).$ The inspiration for the proof comes from Yoshida \cite{yoshida}. 

\smallskip

The periods $c^\pm(M)$ are interesting because of their conjectural relation with the critical values of the $L$-function attached to $M.$ The $\ell$-adic realizations are needed to define this $L$-function, but the definition of $c^\pm(M)$ depends only on the Betti and de~Rham realizations, and our criterion
for their equality can be stated and proved for systems of Betti and de~Rham realizations which are defined in Sect.\,\ref{sec:Betti--deRham} below.  

\smallskip

Motives admit a tensor product and duals, so that one can can consider a motive $M$ together with a multilinear algebraic structure $\fS$. From 
$(M, \fS)$, one can obtain new motives $M'$ by multilinear algebraic constructions. Here is a way to formalize this. Fix a realization functor $R$ with 
values in $\Q$-vector spaces, for instance Betti or de~Rham. Let $G_R \subset \GL(M_R)$ ($R = \mathrm{B}$ or $\mathrm{dR}$) be the algebraic group
of automorphisms of $(M_R, \fS_R)$. The comparison isomorphism $M_{\rm B} \otimes \C \to M_{\rm dR} \otimes \C$ will induce an isomorphism 
$(G_B)_\C \to (G_{\rm dR})_\C.$ Then to any representation $V$ of $G_R$ corresponds a motive $M^V$ deduced from $M$; this construction being functorial 
and compatible with tensor product, and $M_R$ corresponding to $M.$ The transfer $M \mapsto M^V$ is the motivic analogue of Langlands transfer on the automorphic side.

\smallskip

Our aim is to find structures $\fS$ and representations $V$ for which $c^+(M^V) = c^-(M^V).$

\bigskip

\subsection{Betti--de Rham systems of realizations}
\label{sec:Betti--deRham}

A weight $w$ Betti--de Rham system of realizations 
 $M$ over $\Q$ consists of the following data (i)-(iii), obeying (iv): 

\begin{enumerate}
\item[(i)] {\it Betti realization:} for $C$ an algebraic closure of $\R$, a $\Q$-vector space $M_B\{C\}$, depending functorially on $C$. If 
$\C := \R[i]/(i^2+1)$ is the standard algebraic closure of $\R$, this amounts to the data of $M_B := M_B\{\C\},$ and of the 
involution $\sF_\infty$ induced by the complex conjugation automorphism of $\C$. We call the involution $\sF_\infty$ the Frobenius at 
infinity. It gives a decomposition $M_B = M^+ \oplus M^-$ of $M_B$ into its $\pm 1$ eigenspaces. 

\smallskip
\item[(ii)] {\it de Rham realization:} a $\Q$-vector space $M_{\rm dR}$ together with a finite decreasing filtration $\cF$, {\it the Hodge filtration}. 

\smallskip
\item[(iii)] {\it comparison isomorphism:} an isomorphism 
$M_{\rm B}\{C\} \otimes C \to M_{\rm dR} \otimes C$ of $C$-vector spaces that is functorial in $C.$

\end{enumerate}
Let $\sigma_{\rm B}$ (resp., $\sigma_{\rm dR}$) be the antilinear involution of $M_{\rm B} \otimes \C$ (resp., $M_{\rm dR} \otimes \C$) which is the tensor product of the identity and of the complex conjugation on $\C$. The action by functoriality of 
the complex conjugation on $M_B \otimes \C$ is $\sF_\infty \cdot \sigma_{\rm B}$. The data in (iii) amounts to the data of an isomorphism 
$M_{\rm B} \otimes \C \to M_{\rm dR} \otimes \C$ which carries $\sF_\infty \cdot \sigma_{\rm B}$ into $\sigma_{\rm dR}.$ In other words, 
$M^+ \otimes \R \, \oplus \, M^- \otimes i\R$ maps isomorphically onto $M_{\rm dR} \otimes \R.$

\begin{enumerate}
\smallskip
\item[(iv)] One requires that if the complexified Hodge filtration is transported by the comparison isomorphism to a filtration of $M_B \otimes \C,$ still 
denoted $\cF$, it and its complex conjugate $\bar{\cF} = \sigma_{\rm B}(\cF)$ are $w$-opposite, meaning that $M_{\rm B} \otimes \C$ is the direct sum 
of the $M^{p,q} := \cF^p \cap \bar{\cF}^q$ for $p+q = w.$ One calls the decomposition $M_{\rm B} \otimes \C = {\oplus}_{p+q = w} M^{p,q}$ 
{\it the Hodge decomposition}. 
One has $\cF^a = \oplus_{p \geq a} M^{p,q},$ and $M^{p,q}$ and $M^{q,p}$ are complex conjugates. As $\cF^p$ is dR-rational, 
it is stable under $\sigma_{\rm B} \cdot \sF_\infty$, identified with $\sigma_{\rm dR}.$ It follows that $\sF_\infty$ maps $\cF$ into $\bar{\cF}$, hence 
$M^{p,q}$ to $M^{q,p}.$
\end{enumerate}

One defines morphisms, tensor products, and duals in an obvious way. The Hodge decomposition is functorial. It follows that any morphism is strictly compatible with the Hodge filtrations and that the Betti--de Rham systems of realizations of weight $w$ form an abelian category. The tensor product of systems of realizations of weight
$w$ and $w'$ is of weight $w+w'$, and 
the dual of $M$ of weight $w$ is of weight $-w.$

\medskip

\subsubsection*{\bf Examples}
The motivating example is $M = H^n(X)$, the cohomology in degree $n$ of a smooth projective variety $X$ over $\Q$ with $M_{\rm B}\{C\} := H^n(X(C), \Q)$, and $M_{\rm dR}$ the algebraic de Rham cohomology of $X$; the 
weight is $n.$ 

\smallskip

The Tate object $\Q(1)$, of weight $-2$ and Hodge type $(-1,-1),$ is defined by $\Q(1)_{\rm B}\{C\} = 2\pi i \Q \subset C$, and 
$\Q(1)_{\rm dR} = \Q$ purely of filtration $-1$. As $\Q(1)$ is of rank one, its $n^{\rm th}$ tensor power, denoted $\Q(n)$, is defined for $n \in \Z$. For $n=-1$, $\Q(-1)$ is 
the dual of $\Q(1)$.  It is also $H^1(\G_m),$ and the $2\pi i$ comes from 
$\oint \frac{dz}{z} = 2\pi i.$ The $n^{\rm th}$ Tate twist of $M$ is defined as 
$M(n) := M \otimes \Q(n).$

\medskip

\subsection{Critical Betti--de Rham realizations}
\begin{Def}
We say that $M$ is critical if the comparison isomorphism induces an isomorphism 
\begin{equation}
\label{eqn:critical}
M^+ \otimes \C \ \longrightarrow \ (M_{\rm dR}/\cF^0) \otimes \C.
\end{equation}
\end{Def}

The $L$-factor at infinity $L_\infty(M,s)$ attached to $M$ is defined as follows (\cite[5.2]{deligne}, \cite{Se}). 
One defines $\Gamma_\C(s):= 2\cdot(2\pi)^{-s}\Gamma(s), \ \Gamma_\R(s):= \pi^{-s/2}\ \Gamma (s/2), \ h^{p,q}:=\dim M^{p,q};$ for $M$ of even 
weight $w,$ $w=2p,$ for $\epsilon= \pm 1$ define $h^{p,p}(\epsilon)$ to be dimension of the $\epsilon\cdot(-1)^p$ eigenspace of $\sF_{\infty}$ acting on $M^{p,p};$ 
then 
\begin{equation*}
L_{\infty} (M, s):= \left(\prod_{p<q}\ \Gamma_\C (s-p)^{h^{p,q}} \right) \cdot \Gamma_\R(s-p)^{h^{p,p}(+1)} \cdot \Gamma_\R(s+1-p)^{h^{p,p}(-1)}, 
\end{equation*}
where the last two factors are possibly there only when $M$ is of even weight $w=2p.$ 
One has $L_{\infty}(M(n),s)=L_{\infty}(M,s+n)$. 
Criticality of $M$ is equivalent to finiteness of both the values $L_\infty(M,0)$ and $L_\infty(M^{\sf v}(1), 0)$.

\smallskip

\subsubsection{\bf $c^+(M)$ for a critical $M$}
If $M$ is critical, one defines $c^+(M)$ to be the determinant of the isomorphism 
(\ref{eqn:critical}) computed in terms of $\Q$-bases of 
$M^+$ and $M_{\rm dR}/\cF^0.$ It is well-defined modulo multiplication by
$\lambda \in \Q^*$. As $M^+$ maps to $M_{\rm dR} \otimes \R$, the isomorphism in (\ref{eqn:critical})
is the complexification of an isomorphism 
\begin{equation}
\label{eqn:critical-R}
M^+ \otimes \R \ \longrightarrow \ (M_{\rm dR}/\cF^0) \otimes \R.
\end{equation}
Hence, if $M$ is critical, then $c^+(M)$ is a nonzero real number. 
If $\Lambda \subset M^+$ is a lattice, then 
$\Lambda \backslash ((M_{\rm dR}/\cF^0) \otimes \R)$  is a torus whose Lie algebra has the rational structure 
$(M_{\rm dR}/\cF^0)$, and $c^+(M)$ is the volume of that torus for a rational volume element. If $M$ has weight $w < 0$, this torus modulo torsion, 
that is $M^+ \backslash ((M_{\rm dR} / \cF^0)\otimes \R)$ is the group of extensions of $\Q(0)$ by $M$ in the category of mixed Hodge structures with a real Frobenius. 

\smallskip

\subsubsection{\bf $c^+(M)$ for a critical $M$ with coefficients}
More generally, if $M$ is with coefficients in $\E$, that is given with $\E \to \mathrm{End}(M)$, then $M^+$ and $M_{\rm dR}/\cF^0$ are $\E$-vector spaces. If $M$ is critical then 
(\ref{eqn:critical}) is an isomorphism of $\E \otimes \C$-modules. One then defines $c^+_{\E}(M)$ to be the determinant of this isomorphism, computed using $\E$-bases 
of $M^+$ and $M_{\rm dR} / \cF^0$. This determinant is real, that is belongs to $(\E \otimes \R)^*\subset (\E \otimes \C)^*$ and is well-defined modulo $\E^*$. The $c^+(M)$ previously defined is related to $c^+_\E(M)$ by
\begin{equation*}
c^+(M) = N_{\E/\Q}\ c^+_\E(M).
\end{equation*}
When this will not lead to ambiguity, we will write simply $c^+(M)$ for $c^+_\E(M)$.

The $L$-factor at infinity is defined as before, using dimensions over $\E$.

\smallskip

\subsubsection{\bf Conjecture on critical $L$-values}
If $M$ comes from a motive $\M$, and if enough conjectures are verified so that the $L$-function $L(\M,s)$ is defined, \cite{deligne} conjectures that 
for a critical $\M,$ 
$\ L(\M,0)$ can vanish only for $\M$ of weight $-1$, and that if $L(\M,0)\neq 0$, one has 
\begin{equation}
\label{eqn:conj-Q}
L(\M,0) \ \approx \ c^+(M) \quad \pmod{\Q^*}.
\end{equation}
The $L$-function considered is the finite part of the global $L$-function, i.e., without its natural factors at infinity $L_\infty(\M,s)$. 

\smallskip

For example, $\Q(n)$ is critical for $n> 0$ and even, in which case $\cF^0 = 0$, $\Q(n)^+ = \Q(n)_{\rm B}$, 
or for $n < 0$ and odd, in which case $\Q(n)_{\rm dR}/\cF^0 = 0 = \Q(n)^+.$ One has $L(\Q(n),s) = \zeta(s+n)$, and 
for $\Q(n)$ the above conjecture is a consequence of a famous theorem of Euler. 

\smallskip

More generally, if $\M$ is with coefficients in $\E$, one gets an $L$-function $L_\sigma(\M,s)$ for each embedding 
$\sigma : \E \to \C$. 
Using the isomorphism $\E \otimes \C \xrightarrow{\sim} \C^{\Hom(\E,\C)}$, one defines $L (\M,s)$, with values in $\E \otimes \C$ by 
$L(\M,s) = \{L_{\sigma}(\M,s)\}_{\sigma : \E \hookrightarrow \C}$. The conjecture in \cite{deligne} is that if $M$ is critical and that $L(\M,s)\neq 0,$ one has
\begin{equation}
\label{eqn:conj-E}
L(\M,0) \ \approx \ c^+_E(\M) \quad \pmod{\E^*}.
\end{equation}

\medskip

\subsection{Other determinant of periods}

\smallskip

\subsubsection{\bf The invariants $c^\pm(M)$}
We will say that $M$ has vanishing middle Hodge number if either its weight $w$ is odd, or its weight $w$ is even
and $M^{w/2, w/2} = 0.$ If $M$ of weight $2n$ or $2n+1$ has vanishing middle Hodge number, then, 
as $\sF_\infty$ exchanges 
$M^{p,q}$ with $M^{q,p}$, and that $\cF^n$ contains exactly one of them, the maps 
$$
M^\pm \otimes \C \ \to \ (M_{\rm dR}/\cF^n) \otimes \C
$$
are isomorphisms. One defines $c^\pm(M)$ to be their determinants computed using rational bases. As $M^+$ (resp., $M^-$) maps to 
$M_{\rm dR} \otimes \R$ (resp., $M_{\rm dR} \otimes i\R$) we get 
$$
c^+(M) \in \R \quad \mbox{(resp., $c^-(M) \in i^{\dim(M_{\rm B})/2} \R$)}. 
$$
If $w$ is an even integer, say $w = 2n$, and $\sF_\infty$ acts as the identity (resp., by $-1$) on $M^{n,n}$, 
the maps 
$$
M^+ \otimes \C \ \to \ (M_{\rm dR}/\cF^{n+1}) \otimes \C, \quad 
\mbox{(resp., $M^+ \otimes \C \ \to (M_{\rm dR}/\cF^{n}) \otimes \C$),}
$$
and 
$$
M^- \otimes \C \ \to \ (M_{\rm dR}/\cF^{n}) \otimes \C, \quad 
\mbox{(resp., $M^- \otimes \C \ \to \ (M_{\rm dR}/\cF^{n+1}) \otimes \C$)} 
$$
are isomorphisms. Again, one defines $c^\pm(M)$ to be their determinants computed using rational bases, and we get 
$$
c^+(M) \in \R \quad \mbox{(resp., $c^-(M) \in i^{\dim(M^-)}\R$)}. 
$$

In the case where $M$ has vanishing middle Hodge number, the comparison between $c^+(M)$ and $c^-(M)$ is made easier by the fact that 
the maps used in their definition have the same target space.

\smallskip

If $M$ is with coefficients in $\E$, one similarly defines invariants  
$c^\pm_E(M) \in (\E \otimes \C)^*$ which are well-defined modulo $\E^*.$

\smallskip

For $M$ to be critical of weight $w$, $\sF_\infty$ must, when $w$ is even, act by $+1$ or by $-1$ on $M^{w/2, w/2},$ and $\cF^0$ 
must coincide with $\cF^n$ appearing in the above definition of $c^+(M)$, which generalizes the definition given in the critical case. 
If $w \geq 0$ (resp., $w< 0$) this is equivalent to the vanishing of $M^{p,q}$ for which $p,q \geq 0$ (resp., $p,q < 0$), except for 
$p=q$, in which case $\sF_\infty$ must act by $-1$ (resp., $+1$) on $M^{p,p}.$

\smallskip
\subsubsection{\bf Behaviour of $c^\pm(M)$ under Tate-twists}
If the integer $n$ is even (resp., odd), so that the involution $\sF_\infty$ of $\Q(n)_{\rm B}$ is $+1$ (resp., $-1$), one has 
$M(n)^+ = M^+ \otimes (2\pi i)^n\Q$ (resp., $M(n)^+ = M^- \otimes (2\pi i)^n\Q$), so that 
$$
c^+(M(n)) = (2\pi i)^{n \dim(M^+)}c^+(M), \quad 
\mbox{(resp., $c^+(M(n)) = (2\pi i)^{n \dim(M^-)}c^-(M))$}.
$$
The case $n=1$ can be rewritten 
$$
c^-(M) = (2\pi i)^{- \dim(M^-)}c^+(M(1)).
$$

\smallskip
\subsubsection{\bf Yoshida invariants}

For $M$ a weight $w$ Betti--de Rham system of realizations over $\Q$, with coefficients in $E$, Yoshida \cite{yoshida} defines additional invariants, again in $(E \otimes \C)^*/E^*.$
The basic ones are indexed by the $p$ such that $p \leq q := w-p$ and $h^{p,q} \neq 0$, as well as by $p = -\infty.$ The invariant $c_p(M)$ is defined to be the determinant, 
computed in Betti and de~Rham rational bases of a map 
\begin{equation}
\label{eqn:yoshida}
M_{\rm B} \otimes\C \   \longrightarrow \ M_{\rm dR}/\cF^{p+1}\otimes \C \  \oplus \ M_{\rm dR}/\cF^{q} \otimes \C. 
\end{equation}
The map to the first (resp., second) factor is the projection of $M_{\rm dR}$ to $M_{\rm dR}/\cF^{p+1}$ (resp., to $M_{\rm dR}/\cF^q$), complexified, 
composed with the comparison isomorphism ${\sf comp}$ (resp., ${\sf comp} \circ \sF_\infty$). 
The map \eqref{eqn:yoshida} is an isomorphism because 
the kernel of the surjective map to the first (resp., second) factor is the sum of the $M^{a,b}$ for $a \leq p$ (resp., $a > p$, as $\sF_\infty$ exchanges $M^{a,b}$ and $M^{b,a}$). 
For $p = -\infty$, $c_p(M)$ is the determinant $\delta(M)$ of the comparison isomorphism. When $c^\pm(M)$ are defined, the last $c_p(M)$ is the product 
$c^+(M)c^-(M).$

\smallskip

Suppose we have a family of systems $M_\alpha$, and a system $M$ derived from the $M_\alpha$ by a multilinear algebraic construction. If $\dim(M_\alpha) = d_\alpha$; such constructions
correspond to representations of $\prod_\alpha \GL(d_\alpha)$ (such that the center acts by scalars not to mix weights). Yoshida proves that the invariant $c_p(M)$ of $M$, and the invariants $c^\pm(M)$ when defined, are monomials in the invariants for the $M_\alpha$. As an application of this theorem, he shows that when $\dim(M^+) = \dim(M^-)$ and 
$\dim(N^+) = \dim(N^-)$, and when $M \otimes N$ has vanishing middle Hodge number, then by Cor.\,1 to Prop.\,12 on p.\,1188 of \cite{yoshida} one has:
$$
c^+(M \otimes N) \ = \ c^-(M \otimes N). 
$$

\smallskip

To understand, it is convenient to work in a graded tensor category larger than Betti--de Rham systems over $\Q$. The objects of weight $w$ consist of 
\begin{enumerate}
\item[(a)] a $\Q$-vector space $H_{\rm B}$, with an involution $\sF_\infty$, giving a decomposition $H_{\rm B} = H_{\rm B}^+ \oplus H_{\rm B}^-$; 
\item[(b)] a $\Q$-vector space $H_{\rm dR}$, with a decreasing filtration $\cF$; 
\item[(c)] a comparison isomorphism ${\sf comp} : H_{\rm B} \otimes \C \to H_{\rm dR} \otimes \C$, 
\end{enumerate}
such that the involution $\sF_\infty$ of $H_{\rm dR} \otimes \C$, transported by ${\rm comp}$ of the involution $\sF_\infty$ of $H_{\rm B}$, transforms the filtration 
$\cF$ into a filtration that is $w$-opposite to $\cF$ (giving us a decomposition of $H_{\rm dR} \otimes \C$ into $\oplus_{p+q = w} H^{p,q}$). 

\smallskip

Given $(H_{\rm B},  \sF_\infty)$ and $(H_{\rm dR}, \cF)$ of the same dimension, the space of possible comparison isomorphisms is not empty when 
$\dim {\rm Gr}_\cF^p(H_{\rm dR}) = \ \dim {\rm Gr}_\cF^q(H_{\rm dR})$ for $p+q = w$ and either the weight is odd, or in even weight $2p$, 
$|\dim (H_{\rm B}^+) - \dim (H_{\rm B}^-)| \leq \dim {\rm Gr}_\cF^p(H_{\rm dR}).$ When this space is not empty, it is the space of complex points of the 
open orbit of $Z(\sF_\infty) \times P_{\rm dR}$ acting on $\Hom(H_{\rm B},  H_{\rm dR})$. Here, $Z(\sF_\infty)$ is the centralizer of 
$\GL(H^+) \times \GL(H^-)$ of $\sF_\infty$, and $P_{\rm dR}$ is the parabolic subgroup of $\GL(H_{\rm dR})$ respecting $\cF$. The existence of an open orbit 
is a special case of the theorem that for any reductive group, the subgroup fixed by an involution acts with finitely many orbits on flag varieties. 

\smallskip

The invariants we considered are the values at ${\sf comp}$ of functions, defined over $\Q$, and characterized up to a constant factor by their transformation law 
$$
f(p x z) \ = \ \lambda_1(p) f(x) \lambda_2(z),
$$
with $\lambda_1$ (resp., $\lambda_2$) a character of $P_{\rm dR}$ (resp., $Z(\sF_\infty)$). The invariants deduced from such functions $f$ are monomials, with exponents in $\Z$, 
in the basic invariants and $c^\pm$ when defined. In \cite{yoshida}, the attention is restricted to the $f$ being polynomial in the whole of $\Hom(H_{\rm B},  H_{\rm dR})$, which correspond
to monomials with nonnegative integral exponents. 

\smallskip

In the larger category, the invariant $c_p(M)$ can be described as being $c^+(M\otimes N)$ for $N$ of rank $2$, type $\{(-p-1,-q), (-q, -p-1)\},$ and rational comparison isomorphism.

\smallskip

\subsection{Two successive critical points}
Let us suppose that $M$ comes from a motive $\M$ and that enough conjectures are satisfied so that the $L$-function $L(\M,s)$ is defined. One has $L(\M(n),s)=L(\M,n+s)$. Suppose that $M$ and $M(1)$ are critical. When the weight $w$ of $M$ is even, $w=2p$, and that $M^{p,p} \neq 0$, 
this happens only for $w=0$, and $\sF_{\infty}$ has to act by -1 on $M^{0,0}$.

The completed $L$-function is
$$
\Lambda(\M,s) := L_{\infty}(M,s)\ L(\M,s).
$$

\begin{lem}
\label{lem:2.4.1}
If $M$ and $M$(1) are critical, then 
$$
L_{\infty} (M,1)/L_{\infty}(M,0) \ \approx \ \pi^{-\dim(M^-)} \ \pmod{\Q^*}. 
$$
\end{lem}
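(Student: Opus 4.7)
The plan is a direct, case-by-case computation from the explicit formula for $L_\infty(M,s)$ given just above the lemma, tracking only the power of $\pi$ since we work modulo $\Q^*$. The hypothesis that both $M$ and $M(1)$ are critical is used to guarantee that every individual $\Gamma$-value appearing in $L_\infty(M,0)$ and in $L_\infty(M,1) = L_\infty(M(1),0)$ is finite and nonzero, so the ratio is a well-defined nonzero real and it is legitimate to divide the factors one by one.

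The easy contribution comes from the $\Gamma_\C$ factors. From $\Gamma_\C(s) = 2(2\pi)^{-s}\Gamma(s)$ and the functional equation of $\Gamma$, one has $\Gamma_\C(s+1)/\Gamma_\C(s) = s/(2\pi) \equiv \pi^{-1} \pmod{\Q^*}$ for any nonzero rational $s$. Applied to each factor $\Gamma_\C(s-p)^{h^{p,q}}$ (with $p<q$), this contributes $\pi^{-\sum_{p<q} h^{p,q}}$ to the ratio. The $\Gamma_\R$ factors appear only in even weight $w=2p$; the paragraph immediately preceding the lemma observes that the joint criticality of $M$ and $M(1)$ with $M^{p,p}\neq 0$ forces $w=0$ and $\sF_\infty$ acting by $-1$ on $M^{0,0}$. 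Hence $h^{0,0}(+1)=0$, $h^{0,0}(-1)=h^{0,0}$, and only the factor $\Gamma_\R(s+1)^{h^{0,0}}$ is present; its ratio $\Gamma_\R(2)/\Gamma_\R(1) = \pi^{-1/2}\Gamma(1)/\Gamma(1/2)$ is $\equiv \pi^{-1} \pmod{\Q^*}$, contributing $\pi^{-h^{0,0}}$.

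To conclude, I match the total power of $\pi$ with $-\dim(M^-)$. Since $\sF_\infty$ exchanges $M^{p,q}$ with $M^{q,p}$, each pair with $p<q$ contributes equally $h^{p,q}$ to $\dim(M^+)$ and to $\dim(M^-)$, giving $\dim(M^-) = \sum_{p<q} h^{p,q} + h^{w/2,w/2}(-1)$, the second term being zero in odd weight and in the even-weight subcase where $M^{w/2,w/2}=0$. Case by case: in odd weight or in even weight with vanishing middle Hodge piece the total power is $-\sum_{p<q} h^{p,q} = -\dim(M^-)$; in the remaining case $w=0$ with $\sF_\infty=-1$ on $M^{0,0}$, the $\Gamma_\R$ contribution adds exactly the missing $-h^{0,0} = -h^{0,0}(-1)$, again yielding $-\dim(M^-)$. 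The only real obstacle is this bookkeeping in the even-weight case: one must use the simultaneous criticality of $M$ and $M(1)$ to exclude the subcases where a $\Gamma_\R$ ratio would be forced at a different parity of $p$ and the power of $\pi$ would not align with $\dim(M^-)$.
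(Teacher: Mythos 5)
Your proof is correct and follows the same route as the paper's: the two ratios $\Gamma_\C(s+1)/\Gamma_\C(s)\approx 1/\pi$ and $\Gamma_\R(2)/\Gamma_\R(1)=1/\pi$, combined with the observation (from the paragraph preceding the lemma) that the only even-weight case with $M^{w/2,w/2}\neq 0$ is $w=0$ with $\sF_\infty=-1$ there. You merely make explicit the exponent bookkeeping identifying $\sum_{p<q}h^{p,q}$ plus the middle contribution with $\dim(M^-)$, which the paper leaves implicit.
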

This follows from 
$$\Gamma_\C(s+1)/\Gamma_\C(s)\ \approx \ 1/\pi \pmod{\Q^*},
$$ 
for $s$ an integer, and 
$$
\Gamma_\R(2)/\Gamma_\R(1)\ =\ \pi^{-1}/(\pi^{-\frac{1}{2}}\cdot\ \pi^{\frac{1}{2}}) \ = \ 1/\pi.
$$

\medskip
\begin{cor}
Suppose that $M$ and $M(1)$ are critical, that $c^+(M) \approx c^-(M) \pmod{\Q^*}$, and that $L(\M,s)$ does not vanish for $s=0$ or for $s=1$. If the conjecture relating $c^+$ and $L$-values hold for $M$ and $M(1)$, then one has
$$
\Lambda (\M,1)\ \approx \ \Lambda(\M,0) \ \pmod{\Q^*}.
$$
\end{cor}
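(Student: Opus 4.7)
The plan is to express the ratio $\Lambda(\M,1)/\Lambda(\M,0)$ as a product of three factors, each of which is controlled by a result stated earlier in the excerpt. Explicitly, write
\begin{equation*}
\frac{\Lambda(\M,1)}{\Lambda(\M,0)} \ = \ \frac{L_\infty(M,1)}{L_\infty(M,0)} \cdot \frac{L(\M,1)}{L(\M,0)}.
\end{equation*}
The first factor is handled directly by Lemma~\ref{lem:2.4.1}, which says $L_\infty(M,1)/L_\infty(M,0) \approx \pi^{-\dim(M^-)} \pmod{\Q^*}$. For the second factor, the nonvanishing assumption lets me take a genuine ratio, and the conjecture applied to both $M$ and $M(1)$ (noting $L(\M(1),0) = L(\M,1)$) gives
\begin{equation*}
\frac{L(\M,1)}{L(\M,0)} \ \approx \ \frac{c^+(M(1))}{c^+(M)} \pmod{\Q^*}.
\end{equation*}

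Next I invoke the behavior of $c^+$ under Tate twist with $n=1$ (odd): $c^+(M(1)) = (2\pi i)^{\dim(M^-)} c^-(M)$. Combined with the standing hypothesis $c^+(M) \approx c^-(M) \pmod{\Q^*}$, this yields
\begin{equation*}
\frac{c^+(M(1))}{c^+(M)} \ \approx \ (2\pi i)^{\dim(M^-)} \pmod{\Q^*}.
\end{equation*}
Multiplying the two factors gives
\begin{equation*}
\frac{\Lambda(\M,1)}{\Lambda(\M,0)} \ \approx \ \pi^{-\dim(M^-)} \cdot (2\pi i)^{\dim(M^-)} \ = \ (2i)^{\dim(M^-)} \pmod{\Q^*},
\end{equation*}
and the powers of $\pi$ cancel cleanly.

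The main obstacle, modest as it is, is to verify that the surviving factor $(2i)^{\dim(M^-)}$ lies in $\Q^*$, i.e., that $\dim(M^-)$ is even. This is forced by the hypotheses: under the criticality of both $M$ and $M(1)$, the invariant $c^-(M)$ takes values in $i^{\dim(M^-)}\R$ as recalled in the definition of $c^\pm(M)$, while $c^+(M) \in \R$; the assumption $c^+(M) \approx c^-(M) \pmod{\Q^*}$ therefore compels $i^{\dim(M^-)} \in \Q^*$, hence $\dim(M^-)$ is even. This rationality check is really the only subtlety in what is otherwise a direct bookkeeping argument chaining Lemma~\ref{lem:2.4.1}, the conjecture \eqref{eqn:conj-Q}, and the Tate-twist formula for $c^+$.
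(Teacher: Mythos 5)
Your proof is correct and follows essentially the same route as the paper's: the same factorization of $\Lambda(\M,1)/\Lambda(\M,0)$ into the $L_\infty$ ratio (handled by Lemma~\ref{lem:2.4.1}) and the finite $L$ ratio (handled by the conjecture plus the Tate-twist formula $c^+(M(1)) = (2\pi i)^{\dim(M^-)}c^-(M)$), together with the observation that $c^+(M)\approx c^-(M)$ forces $\dim(M^-)$ to be even since $c^+(M)$ is real while $c^-(M)\in i^{\dim(M^-)}\R$. No gaps.
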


If $c^+(M)\ \approx\ c^-(M)$ then $\dim M^-$ is necessarily even, as otherwise $c^+(M)$ would be real and $c^-(M)$ imaginary. 
From \eqref{eqn:conj-Q} and Lem.\,\ref{lem:2.4.1} one has: 
$$
\frac{\Lambda (\M,0)}{\Lambda(\M,1)} \ = \ 
\frac{L_{\infty}(M,0)}{L_{\infty}(M,1)} \cdot \frac{L(\M,0)}{L(\M,1)} \ \approx \ 
\pi^{\dim(M^-)} \cdot \frac{c^+(M)}{c^+(M(1))} \ \approx \ 
i^{\dim(M^-)}\cdot \frac{c^+(M)}{c^-(M)}. 
$$
Replacing $\Q^*$  by $\E^*$, the same applies to motives with coefficients in $\E$.

\medskip

\subsection{Multilinear algebra constructions}
\label{sec:mult-lin-alg}

Suppose $(M_\alpha)_{\alpha \in A}$ is a finite family of Betti--de~Rham systems of realizations, with $M_\alpha$ of weight $w_\alpha.$ 
If $\fr = (r_\alpha)$ and $\fs = (s_\alpha)$ are systems of non-negative integers, we define 
$T_{\fr, \fs}$ to be $(\otimes M_\alpha^{\otimes r_\alpha}) \otimes (\otimes M_\alpha^{{\sf v}\otimes s_\alpha}).$ This tensor system is 
 of weight $\sum_\alpha (r_\alpha-s_\alpha) w_\alpha$. A multilinear algebra structure $\fS$ on $(M_\alpha)$ is the data of a family of morphisms 
$$
t_i : \Q(0) \ \to \ T_{\fr(i), \fs(i)}((M_\alpha)),
$$
(with the targets necessarily of weight zero) and of sub-objects of sums of tensor systems all of the same weight. 

\smallskip

Let $\fS$ be such a structure and let $\upsilon \in \{{\rm B}, {\rm dR}\}.$ We define $G(\fS)_\upsilon$ to be the algebraic subgroup of 
$\prod_\alpha \GL(M_{\alpha \, \upsilon})$
that respects the $\upsilon$-realizations $\fS_\upsilon$ of $\fS.$ One has $\Q(0)_\upsilon = \Q$ and $G(\fS)_\upsilon$ is the subgroup of 
$\prod_\alpha \GL(M_{\alpha \, \upsilon})$ which fixes
the tensors $t_{i \upsilon}(1)$ 
and stabilizes the $\upsilon$-realizations of the given subobjects.
Let the multiplicative group $\G_m$ act on $M_{\alpha \, \upsilon}$ by 
$z \mapsto \mbox{multiplication by $z^{w_\alpha}$}.$ The morphism 
$\G_m \to \prod_\alpha \GL(M_{\alpha \, \upsilon})$ defining these actions factors through $\boldsymbol{w} : \G_m \to G(\fS)_\upsilon$, the  central {\it weight morphism}. A representation $V$ of $G(\fS)_\upsilon$ is of {\it weight} $w$ if $\boldsymbol{w}(z)$ acts on $V$ by multiplication by $z^w.$ 

\smallskip

The construction which follows formalizes the notion of a {\it system $M$ constructed by multilinear algebra from $(M_\alpha)$ and $\fS.$} The construction is that of a functor $V \mapsto M^V$, compatible with direct sums, tensor products, and duals, from representations of some weight $w$ of 
$\mathrm{G}(\fS)_{\upsilon}$ to systems of realizations. Applied to the representation $M_{\alpha\, \upsilon}$, it gives back $M_{\alpha}$. This construction is a corollary of the fundamental theorem on tannakian categories. We will first explain this, and then give a more down to earth description of $V \mapsto M^V.$ 

\subsubsection{\bf Tannakian formalism giving $V \mapsto M^V$}
\label{sec:tannakian}
Let $\cT$ be the category of finite formal sums of Betti--de Rham systems of realizations of different weights. In other words, let $\cT$ be the category of systems $N$ consisting of graded vector spaces $N_{\rm B} = \oplus N_{\rm B}^w$, and $N_{\rm dR} = \oplus N_{\rm dR}^w$, and for each $w$ 
of the data turning $(N_{\rm B}^w, N_{\rm dR}^w)$ into a Betti--de Rham system of realization of weight $w$. With its natural associative and commutative 
tensor product, the category $\cT$ is tannakian, and the $\otimes$-functors $N \mapsto N_{\rm B}$ and $N \mapsto N_{\rm dR}$ are fiber functors. 

\smallskip

Let $\langle (M_\alpha) \rangle$ be the subcategory of $\cT$ generated from the $(M_\alpha)$ by iteratively taking direct sums, tensor products, duals, and 
subquotients. It is again tannakian. 
The {\it fundamental group} $\pi(\langle (M_\alpha) \rangle, \upsilon)$ is the linear algebraic group of automorphisms of 
the  $\otimes$-functor $\upsilon$ from $\langle (M_\alpha) \rangle$ to $\Q$-vector spaces. 
This fundamental group maps to $\prod \GL(M_{\alpha \, \upsilon})$ and this map identifies it with the closed subgroup of $\prod \GL(M_{\alpha \, \upsilon})$ which respects the $\upsilon$-realization of \underline{all} multilinear algebra structures on the $M_\alpha.$ Now, $\upsilon$ being a fiber functor induces an equivalence from $\langle (M_\alpha) \rangle$ to the category
of representations of $\pi(\langle (M_\alpha) \rangle, \upsilon).$ 
The group $\pi(\langle (M_\alpha) \rangle, \upsilon)$ is a subgroup of $G(\fS)_\upsilon$. If $V$ is a representation of $G(\fS)_\upsilon$, then 
$M^V$ is obtained by applying the inverse equivalence to the restriction of $V$ to $\pi(\langle (M_\alpha) \rangle, \upsilon).$ 

\smallskip
The same argument applies to systems of realizations including in addition $\ell$-adic realizations and comparison isomorphisms with Betti realization tensored with $\Q_{\ell}$.

\medskip

\subsubsection{\bf Explicating the construction of $M^V$}
Here is a more down to earth description of $V \mapsto M^V$, 
for Betti--de Rham systems of realizations,
when $\upsilon = {\rm B}.$ For each $\alpha$, the Hodge decomposition of 
$M_{\alpha \, {\rm B}} \otimes \C$ defines the action of $\G_m \times \G_m$ on $M_{\alpha \, {\rm B}} \otimes \C$ for which 
$(z_1, z_2)$ acts on $M_\alpha^{p,q}$ by multiplication by $z_1^{-p}z_2^{-q}.$ Each of the tensors $t_{i\, {\rm B}}(1)$ of the 
structure $\fS_{\rm B}$ is of type $(0,0)$ and fixed by $\sF_\infty$, being in the image of the morphism from $\Q(0)$ to a system of realizations. 
Similarly, for each subobject $N$, part of the 
multilinear algebra structure, $N_{\rm B} \otimes \C$ is sum of its $N^{a,b}$ and is stable by $\sF_\infty$. 
The action $\G_m \times \G_m \to \prod \GL(M_{\alpha\, {\rm B}} \otimes \C)$ hence factors through $G(\fS)_{\rm B}(\C)$, and 
$\sF_\infty \in \prod \GL(M_{\alpha\, {\rm B}})$ is in $G(\fS)_{\rm B}(\Q)$.

\smallskip

Let $T$ be the scheme of isomorphisms between the collection of the $M_{\alpha \, {\rm B}}$, provided with the structure $\fS_{\rm B}$, and 
the collection of the $M_{\alpha \, {\rm dR}}$, provided with the structure $\fS_{\rm dR}.$ While there are interesting cases where the scheme 
$T$ has no rational points, it is not empty: the collection of comparison isomorphisms 
$$
\fC \ := \ (M_{\alpha \, {\rm B}} \otimes \C \to M_{\alpha \, {\rm dR}} \otimes \C)_{\alpha \in A}
$$
is a complex point of $T$. As $T$ is not empty, it is a principal homogeneous space (torsor) for the right composition action of 
$G(\fS)_{\rm B}$, as well as for the left composition action of $G(\fS)_{\rm dR}.$

\smallskip

For a representation $(\sigma, V)$ of $G(\fS)_{\rm B}$, one defines
$$
M^V_{\rm B} \ := \ V, \quad \mbox{$\sF_\infty$ for $M^V$ is $\sigma(\sF_\infty).$}
$$
For the de Rham realization we define
$$
M^V_{\rm dR} \ := \ \mbox{$V$ twisted by the $G(\fS)_{\rm B}$-torsor $T$.}
$$
It is characterized, up to unique isomorphism, as being given with $ \rho : T \to {\rm Isom}(M^V_{\rm B}, M^V_{\rm dR})$, 
equivariant for the right action of $G(\fS)_{\rm B}$. Over any extension $L$ of $\Q$, a point $p \in T(L)$ defines an isomorphism
$\rho(p)$ of $L$-vector spaces from $M^V_{\rm B} \otimes L = V \otimes L$ to $M^V_{\rm dR} \otimes L,$ and 
$\rho(pg) = \rho(p) \sigma(g),$ for $g \in G(\fS)_{\rm B}$. This defines $M^V_{\rm dR} \otimes L$ up to unique isomorphism 
when $T(L)$ is not empty, and $M^V_{\rm dR}$ is defined from these $M^V_{\rm dR} \otimes L$ by descent. 
One defines the comparison isomorphism $M^V_{\rm B} \otimes \C  \to M^V_{\rm dR} \otimes \C$ to be $\rho(\fC)$.

\smallskip

The Hodge decompositions of the $M_{\alpha \, {\rm B}} \otimes \C$ define a morphism over $\C$ from 
$\G_m(\C) \times \G_m(\C)$ to $G(\fS)_{\rm B}(\C)$. In turn, this morphism defines a Hodge decomposition 
of $V\otimes \C = M^V_{\rm B} \otimes \C = \oplus(M^V)^{p,q},$ and the Hodge filtration 
$\cF^a = \oplus_{p \geq a} (M^V)^{p,q}$ over $\C$ of $M^V_{\rm dR} \otimes \C$. 

\begin{lem*}
The above filtration $\cF^\bullet$ of $M^V_{\rm dR} \otimes \C$ is rational, i.e., it is the complexification of a filtration 
$\cF_{\rm dR}$ of $M^V_{\rm dR}.$
\end{lem*}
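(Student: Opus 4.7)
My plan is to apply the tannakian formalism of Sect.\,\ref{sec:tannakian} so as to transfer the $\Q$-rationality of the Hodge filtration on each generator $M_{\alpha,{\rm dR}}$ --- which is built into the definition of a Betti--de~Rham system --- to $\Q$-rationality on $M^V_{\rm dR}.$

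First I would identify $M^V$ with an object of $\langle (M_\alpha)\rangle.$ Restrict $V$ from $G(\fS)_{\rm B}$ to the fundamental group $\pi := \pi(\langle (M_\alpha)\rangle,{\rm B})$ and apply the tannakian equivalence between ${\rm Rep}_\Q(\pi)$ and $\langle (M_\alpha)\rangle$; this produces an object $N$ built from the $M_\alpha$ by iterated direct sums, tensor products, duals, and subquotients. To see that $N$ coincides with $M^V$: the Betti realizations are both $V$ (as $\pi$-modules), while the de~Rham realizations both twist $V$ by the torsor of isomorphisms between Betti and de~Rham fiber functors. Explicitly, the sub-torsor $T_\pi$ of isomorphisms on all of $\langle (M_\alpha)\rangle$ sits inside the torsor $T$ of $\fS$-isomorphisms introduced in Sect.\,\ref{sec:tannakian}; since $T_\pi$ is a $\pi$-subtorsor of the $G(\fS)_{\rm B}$-torsor $T$, one has $T = T_\pi\times^{\pi}G(\fS)_{\rm B}$, whence $V\times^{G(\fS)_{\rm B}}T = V|_\pi\times^{\pi}T_\pi = N_{\rm dR}.$

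Next, rationality of the Hodge filtration on the de~Rham realization is preserved by each of the elementary operations of $\langle (M_\alpha)\rangle$: by the usual formulas for $\oplus$, $\otimes$, and dual, and for subquotients because morphisms of Betti--de~Rham systems are strictly compatible with the Hodge filtration (Sect.\,\ref{sec:Betti--deRham}). Since the Hodge filtration on each $M_{\alpha,{\rm dR}}$ is $\Q$-rational by assumption, an induction on the construction of $N$ shows that the Hodge filtration on $N_{\rm dR}$ descends to a $\Q$-filtration $\cF_{\rm dR}$ of $N_{\rm dR}.$

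Finally, I would verify that $\cF_{\rm dR}$ produced in the previous step, complexified, recovers the filtration $\cF^\bullet$ of the statement. Both filtrations on $N_{\rm dR}\otimes\C = M^V_{\rm dR}\otimes\C$ are obtained, via the comparison isomorphism, from the filtration on $V\otimes\C$ coming from the Hodge decomposition --- and this decomposition is functorial under direct sum, tensor product, dual, and subquotient, so that both sides are computed from the Hodge decompositions of the $M_{\alpha,{\rm B}}\otimes\C$ by the same multilinear-algebra recipe encoded in $N.$ The principal obstacle, I expect, is the first step: verifying carefully that the concrete torsor-twisting construction of $M^V_{\rm dR}$ matches the abstract tannakian output $N_{\rm dR}.$ Once this identification is nailed down, $\Q$-rationality becomes a formality from the axioms of a Betti--de~Rham system and the standard compatibility of Hodge theory with multilinear operations.
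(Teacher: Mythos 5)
Your proof is correct, but it descends to $\Q$ by a different mechanism than the paper. The paper's argument stays entirely inside the ``down to earth'' torsor picture: it first takes $V$ to be a subrepresentation of a tensor space $S_{\rm B}$, observes that the image of $V\otimes\C$ under any $t\in T(\C)$ is independent of $t$ (so that, $T$ being a nonempty scheme over $\Q$, this image is the complexification of a rational subspace $V_{\rm dR}\subset S_{\rm dR}$), identifies $M^V_{\rm dR}$ with $V_{\rm dR}$ carrying the trace of the rational de~Rham filtration of $S_{\rm dR}$, and then passes to subquotients $V_1/V_2$ by taking images of rational filtrations. You instead outsource the descent to the fundamental theorem on tannakian categories: you identify $M^V$ with the object $N$ of $\langle(M_\alpha)\rangle$ attached to $V|_\pi$, via the contraction $T=T_\pi\times^{\pi}G(\fS)_{\rm B}$, and then rationality of $\cF_{\rm dR}$ is automatic because $N$ is by construction an object of the category of Betti--de~Rham systems; the remaining check that the two filtrations on $M^V_{\rm dR}\otimes\C$ coincide follows, as you say, from functoriality of the Hodge decomposition and the fact that it agrees with the $\G_m\times\G_m$-eigenspace decomposition on the generators $M_\alpha$. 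This is essentially the route the paper sets up in its tannakian paragraph and then deliberately avoids in the proof of the lemma; what your version buys is that the only nontrivial verification is the torsor identification $V\times^{G(\fS)_{\rm B}}T\cong V|_\pi\times^{\pi}T_\pi$ (which you rightly single out as the crux, and which does hold: the map $T_\pi\times^{\pi}G(\fS)_{\rm B}\to T$ is an equivariant morphism of $G(\fS)_{\rm B}$-torsors over $\Q$, hence an isomorphism), while the paper's version buys a self-contained argument whose only input is Galois descent for the scheme $T$ and strictness of morphisms for the Hodge filtration. Both ultimately rest on realizing $V$ as a subquotient of tensor constructions, so the difference is one of packaging rather than substance.
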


Granting the lemma one completes the construction of $M^V$ by defining the Hodge filtration of $M^V_{\rm dR}$ to be $\cF_{\rm dR}.$ 

\begin{proof}[Proof of Lemma]
Let $S$ be a sum of $T_{\fr(i), \fS(i)}(M_\alpha)$, and let us first suppose that $V$ is a subrepresentation of $S_{\rm B}$. The
image of $V \otimes \C$ by $t \in T(\C)$ is independent of $t$. As $T$ is defined over $\Q$, this image is the complexification 
of a subspace $V_{\rm dR}$ of $S_{\rm dR}$. The morphism $T \to {\rm Isom}(V, V_{\rm dR})$ identifies $M^V_{\rm dR}$ with
$V_{\rm dR}.$ The filtration $\cF^\bullet$ is rational because it is the 
trace of the de Rham filtration of $S_{\rm dR} \otimes \C$. For $V$ a subrepresentation of $S_{\rm B}$, this exhibits $M^V$ as a 
subobject of $S$. Any $V$ can be realized as a subquotient $V_1/V_2$ of some $S_{\rm B},$ and the filtration $\cF^\bullet$ of 
$M^{V_1/V_2}_{\rm dR} \otimes \C$ is rational because it is the image of the rational filtration of $M^{V_1} \otimes \C.$
\end{proof}

\subsubsection{\bf The image of $G(\fS)_{\rm B}$ under a representation.}

\begin{prop*}
If $(\sigma, V)$ is a representation of $G(\fS)_{\rm B}$, and if $M^V$ is the corresponding system of realizations deduced from 
the data $((M_{\alpha}), \fS, (\sigma, V))$, then the image of $G(\fS)_{\rm B}$ in $\GL(M^V_{\rm B}) = \GL(V)$ is the group 
$G(\fT)_{\rm B}$ for some multilinear algebra structure $\fT$ on $M^V.$ 
\end{prop*}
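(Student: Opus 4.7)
The plan is to invoke a theorem of Chevalley to present $H := \sigma(G(\fS)_{\rm B})$ as the common stabilizer of a finite collection of subspaces sitting inside tensor constructions on $V$, and then to promote those subspaces to bona fide subobjects of the corresponding tensor constructions on $M^V$ in the category of Betti--de~Rham systems of realizations. The structure $\fT$ will be precisely this collection of subobjects.

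Since $\sigma$ is a morphism of affine algebraic groups over $\Q$, its image $H$ is a closed $\Q$-subgroup of $\GL(V)=\GL(M^V_{\rm B})$. By the Tannakian form of Chevalley's theorem (every closed subgroup of $\GL(V)$ is the common stabilizer of a finite family of subspaces in tensor constructions on $V$), one obtains subspaces $U_j\subseteq T_{\fr(j),\fs(j)}(V)$ such that $H=\{g\in\GL(V):gU_j=U_j \text{ for all } j\}$.

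The heart of the proof is to show that each $U_j$ is the Betti realization of a subobject of $T_{\fr(j),\fs(j)}(M^V)$. By construction, each $U_j$ is $H$-stable, hence stable under the $G(\fS)_{\rm B}$-action on $T_{\fr(j),\fs(j)}(V)$ obtained via $\sigma$. Three compatibilities then have to be checked. First, $\sF_\infty$ for $M^V$ is by definition $\sigma(\sF_\infty)\in H$, so it preserves $U_j$. Secondly, the Hodge decomposition of $T_{\fr(j),\fs(j)}(V)\otimes\C$ is induced from the cocharacter $\G_m\times\G_m\to G(\fS)_{\rm B}(\C)$ that gives the Hodge decompositions of the $M_{\alpha,{\rm B}}\otimes\C$; its image under $\sigma$ lies in $H(\C)$ and therefore preserves $U_j\otimes\C$, so $U_j\otimes\C$ is a sub-Hodge structure. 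Thirdly, for the de~Rham realization, we use that $M^V_{\rm dR}$ is obtained by twisting $V$ by the $G(\fS)_{\rm B}$-torsor $T$: because $U_j$ is $G(\fS)_{\rm B}$-stable, this torsor-twist produces a $\Q$-rational subspace $U_{j,{\rm dR}}$ of $T_{\fr(j),\fs(j)}(M^V_{\rm dR})$ whose complexification matches $U_j\otimes\C$ under the comparison isomorphism $\rho(\fC)$. This is precisely the same argument that gave the rational Hodge filtration in the Lemma of Sect.~\ref{sec:mult-lin-alg}, applied to the subrepresentation $U_j$ of $T_{\fr(j),\fs(j)}(V)$.

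Define $\fT$ to be the multilinear algebra structure on $M^V$ consisting of the subobjects corresponding to the $U_j$ just constructed. Then, by the very definition in Sect.~\ref{sec:mult-lin-alg}, $G(\fT)_{\rm B}$ is the subgroup of $\GL(V)$ stabilizing each $U_j$, and by our choice via Chevalley this coincides with $H=\sigma(G(\fS)_{\rm B})$, proving the proposition. The only nontrivial step is the de~Rham rationality of the $U_j$ in the previous paragraph; this is the main obstacle, and it is handled by the torsor-twist construction of $M^V_{\rm dR}$ already exploited in Sect.~\ref{sec:mult-lin-alg}.
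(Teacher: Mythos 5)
Your proposal is correct and follows essentially the same route as the paper: invoke Chevalley's theorem to realize the image as the stabilizer of a multilinear algebra structure on $V$, then observe that, being $G(\fS)_{\rm B}$-stable, this structure is the Betti realization of a structure $\fT$ on $M^V$. You merely make explicit the compatibility checks (with $\sF_\infty$, the Hodge decomposition, and de Rham rationality via the torsor twist) that the paper leaves implicit by referring back to its Lemma.
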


This results from the theorem that any algebraic subgroup of $\GL(V)$ is the subgroup which respects some multilinear algebra 
structure $\fT$ on $V$. For an algebraic subgroup of $\GL(V)$ containing the image of $G(\fS)_{\rm B}$, such a structure 
$\fT$ is necessarily the Betti realization of a multilinear algebra stucture on $M^V$.

\medskip
\subsection{A criterion for $c^+(M)\approx_\E c^-(M)$}
\label{sec:criterion}

Let $M$ be a Betti--de Rham system of realizations, and $\fS$ be a multilinear algebra structure on $M.$ The subgroup $G(\fS)_{\rm B}$ of 
$\GL(M_{\rm B})$ respecting the Betti realization $\fS_{\rm B}$ of $\fS$ contains the Frobenius at infinity $\sF_\infty.$ Let $Z(\sF_\infty)$ be 
its centralizer. It acts on the $+1$ and $-1$ eigenspaces $M^+$ and $M^-$ of $\sF_\infty.$ Let $\chi^+$ and $\chi^-$ be the characters of 
$Z(\sF_\infty)$ which are the determinants of the action of $Z(\sF_\infty)$ on $M^+$ and $M^-.$ 
If $M$ is with coefficients in $\E$, and 
if $\E \to {\rm End}(M)$ is part of the structure $\fS$, then $G(\fS)_{\rm B}$ is contained in $\GL_\E(M_{\rm B}),$ and one similarly defines
algebraic homomorphisms $\chi_\E^+$ and $\chi^-_\E$ from $Z(\sF_\infty)$ to $\E^*:$ 
$$
\xymatrix{
Z(\sF_\infty) \ar[r] \ar[rd]_{\chi_\E^\pm} & \GL_\E(M^\pm) \ar[d]^{\det} \\ 
& \E^*
}$$
To view $\E \to {\rm End}(M)$ as a multilinear algebra 
structure, one encodes it by endomorphisms of $M$ indexed by a basis of $\E$ and viewed as morphisms $\Q(0) \to M^{\sf v} \otimes M;$ 
this uses that ${\rm End}(M) = \Hom(\Q(0), M^{\sf v} \otimes M).$ 
Let $P$ be the subgroup of $G(\fS)_{\rm B}(\C)$ which respects 
the Hodge filtration of $M_{\rm B} \otimes \C.$ 

\smallskip

As in 2.3.2, let $T$ be the scheme of isomorphisms from $(M_{\rm B}, s_{\rm B})$ to $(M_{\rm dR}, s_{\rm dR})$, and $P_{\rm dR}$ be the algebraic subgroup of $G(s)_{\rm dR}$ respecting 
$\cF_{\rm dR}$. For $\bQ$ the algebraic closure of ${\Q}$ in $\C$, $\Gal(\bQ/\Q)$ acts on set $\pi_0(T(\C))$ on connected components of $T(\C)$. Let $K_0$ be the field of definition of the connected  component $T(\C)_0$ containing the comparison isomorphism $\fC \in T(\C)$. 
It is the finite extension of $\Q$ in $\C$ corresponding to the stabilizer of $T(\C)_0 \in \pi_0(T(\C))$. Let $K_1 \subset K_0$ the field of definition of the union $P_{\rm dR} (\C)\ T(\C)_0\ Z(\sF_{\infty})(\C)$ of connected components. The most interesting case is when the union is all of $T(\C)$ (it is so when $G(s)_{\rm B}$ is connected). In that case, $K_1 = \Q$.

\smallskip
\begin{thm}
\label{thm:criterion-M-V}
Suppose that $M$ is with coefficients in $\E$, and $\E \to {\rm End}(M)$ is part of the multilinear algebra structure $\fT$. 
If the weight $w$ of $M$ is even, then suppose also that 
then $M^{w/2,w/2} = 0.$ 

\smallskip

\begin{enumerate}
\item[(i)] If the restrictions of 
$\chi^+_E$ and $\chi^-_E$ to the connected component $Z(\sF_\infty)^0$ of $Z(\sF_\infty)$ are equal, then 
$c^+(M)/c^-(M)$ is algebraic. More precisely, $c^+(M)/c^-(M) \in (\E \otimes K_0)^*.$ 
\smallskip
\item[(ii)] If $\chi^+_\E = \chi^-_\E$, 
then $c^+(M)/c^-(M) \in (\E^* \otimes K_1)^*.$ 
Moreover, if $G(\fS)_{\rm B}$ is connected, then $c^+(M)/c^-(M) \in E^*$.
\end{enumerate}
\end{thm}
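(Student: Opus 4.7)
The plan is to realize both $c^+_\E(M)$ and $c^-_\E(M)$ as values at the comparison isomorphism $\fC$ of two regular functions on the torsor $T$, work out their equivariance under the left action of $P_{\rm dR}$ and the right action of $Z(\sF_\infty)$, and then descend the ratio along an orbit of this product action. This follows the general strategy by which Yoshida expresses basic periods as determinants of orbit maps on $\Hom(M_{\rm B}, M_{\rm dR})$.

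First I would fix $\E$-bases of $M^+$, $M^-$, and $M_{\rm dR}/\cF^n$, where $\cF^n$ is the piece of the Hodge filtration appearing in the definition of $c^\pm(M)$ under the vanishing middle Hodge hypothesis. For each $t \in T(\C)$, composing the inclusion $M^\pm \hookrightarrow M_{\rm B}$ with $t$ and the projection to $M_{\rm dR}/\cF^n$ yields complex $\E$-linear maps $\varphi^\pm_t \colon M^\pm \otimes \C \to (M_{\rm dR}/\cF^n)\otimes \C$, and setting $f^\pm(t) := \det_\E \varphi^\pm_t$ defines regular functions $f^\pm$ on $T$, defined over $\Q$ with values in $\E\otimes(\cdot)$, such that $f^\pm(\fC) \equiv c^\pm_\E(M) \pmod{\E^*}$. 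The vanishing middle Hodge assumption guarantees $\dim_\E M^+ = \dim_\E M^- = \dim_\E M_{\rm dR}/\cF^n$, so these determinants make sense and both $f^\pm$ land in the same $\E$-valued line. A direct computation then yields the equivariance
\[
f^\pm(p \cdot t \cdot z) \ = \ \lambda_\E(p) \cdot \chi^\pm_\E(z) \cdot f^\pm(t), \qquad p \in P_{\rm dR},\ z \in Z(\sF_\infty),
\]
where $\lambda_\E$ is the character by which $P_{\rm dR}$ acts on $\det_\E(M_{\rm dR}/\cF^n)$. Crucially $\lambda_\E$ does not depend on the sign $\pm$, because the targets of $\varphi^+_t$ and $\varphi^-_t$ coincide under the vanishing middle Hodge hypothesis. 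Hence the ratio $g := f^+/f^-$ is left-$P_{\rm dR}$-invariant and transforms on the right by $g(p\cdot t\cdot z) = (\chi^+_\E/\chi^-_\E)(z)\cdot g(t)$.

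For part (i), the assumption forces $g$ to be invariant under $P_{\rm dR}(\C) \times Z(\sF_\infty)^0(\C)$. The orbit-finiteness theorem cited in Section 2.3.3 above --- finitely many orbits of the product of a parabolic with the fixed subgroup of an involution on a flag variety --- implies that this action has an open orbit in the irreducible component $T(\C)_0$, so the algebraic function $g$ must be constant on $T(\C)_0$. Since $T(\C)_0$ is $\Gal(\bQ/K_0)$-stable by definition of $K_0$, and $g$ is $\Q$-rational, the constant value $g(\fC)$ is Galois-invariant and therefore lies in $(\E \otimes K_0)^*$. Part (ii) is the same argument with $Z(\sF_\infty)^0$ replaced by $Z(\sF_\infty)$: the function $g$ is now constant on each $Z(\sF_\infty)$-translate of $T(\C)_0$, hence on the union $P_{\rm dR}(\C)\,T(\C)_0\,Z(\sF_\infty)(\C)$, which is defined over $K_1$, giving $g(\fC) \in (\E \otimes K_1)^*$. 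When $G(\fS)_{\rm B}$ is connected the torsor $T$ is geometrically connected, so this union exhausts $T(\C)$, we have $K_1 = \Q$, and $g(\fC) \in \E^*$.

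The hard part will be the openness assertion in the previous paragraph: that the $P_{\rm dR} \times Z(\sF_\infty)^0$-orbit through $\fC$ is open in $T(\C)_0$, since it is this openness that promotes orbit-invariance of $g$ to constancy on the whole component. This rests on the vanishing middle Hodge hypothesis --- which equalizes the relevant dimensions so that the orbit through $\fC$ is of maximal dimension --- together with the Matsuki/Richardson-type theorem on orbits of symmetric pairs on flag varieties. A secondary issue is keeping careful track of $\E$-linearity throughout, so that $\chi^\pm_\E$ and $\lambda_\E$ are genuinely $\E$-valued and the final answer lands in $(\E\otimes K_\bullet)^*$ rather than just $K_\bullet^*$.
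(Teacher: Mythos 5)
Your overall architecture coincides with the paper's: define $f^\pm(t)=\det_\E(t^\pm)$ on the torsor $T$, observe $f^\pm(\fC)=c^\pm_\E(M)$, establish the equivariance $f^\pm(ptz)=\chi_{\rm dR}(p)f^\pm(t)\chi^\pm_\E(z)$ with the $P_{\rm dR}$-character independent of the sign because the two maps share the target $M_{\rm dR}/\cF^n$, and then descend the constant value of $f^+/f^-$ to $K_0$ (resp.\ $K_1$) using $\Q$-rationality of $f^\pm$ and the fields of definition of $T(\C)_0$ and of $P_{\rm dR}(\C)T(\C)_0 Z(\sF_\infty)(\C)$. All of that matches.

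The gap is exactly where you locate it: the openness of the double coset $P_{\rm dR}(\C)\,\fC\,Z(\sF_\infty)^0(\C)$ in $T(\C)_0$, which is what upgrades orbitwise invariance of $f^+/f^-$ to constancy on the component. Your proposed tool --- the Matsuki/Richardson finiteness of orbits of a symmetric subgroup on a flag variety --- requires $G(\fS)^0$ to be reductive, and the theorem makes no such hypothesis; the paper explicitly relegates that route to a remark, noting it is available only ``in all the examples'' where $G(\fS)^0$ happens to be reductive. Moreover, even granting reductivity, finiteness of orbits only guarantees that \emph{some} orbit is open, not that the orbit through $\fC$ is; and your suggestion that the vanishing middle Hodge hypothesis is what makes the orbit through $\fC$ maximal-dimensional is off target (that hypothesis only ensures $c^\pm$ are defined with a common target). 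The correct and fully general argument, which is the actual content of the paper's Lemma at this point, is an infinitesimal one at $\fC$ itself: identify $T(\C)$ with $G(\fT)_{\rm B}(\C)$ via $g\mapsto \fC g$, so openness of the double coset at $\fC$ amounts to
\[
{\rm Lie}\bigl(G(\fT)_{\rm B}\bigr)\otimes\C \ = \ {\rm Lie}\bigl(Z(\sF_\infty)\bigr)\otimes\C \ + \ {\rm Lie}\bigl(P_{\rm dR}\bigr)\otimes\C .
\]
This follows because the adjoint representation of $G(\fT)_{\rm B}$ is the Betti realization of a weight-$0$ system of realizations $N$, in which ${\rm Lie}(Z(\sF_\infty))=N^+$ and ${\rm Lie}(P_{\rm dR})=\cF^0$, and for \emph{any} weight-$0$ system one has $N_{\rm B}\otimes\C=N^+_{\rm B}\otimes\C+\cF^0$ (a consequence of the Hodge decomposition and the action of $\sF_\infty$ on it). Replacing your appeal to the symmetric-pair orbit theorem by this Lie-algebra identity closes the gap and removes the hidden reductivity assumption.
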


\begin{proof}
Fix bases over $\E$ of $M^\pm$, and of $M_{\rm dR}/\cF^n$ where $2n$ or $2n-1$ is the weight of $M.$  Given $t \in T$, define $t^\pm$ by the diagram: 
$$
\xymatrix{
M^{\pm} \ar@{^{(}->}[d] \ar[rr]^{t^{\pm}} & & M_{\rm dR}/\cF^n\\
M_{\rm B} \ar[rr]^{t}& & M_{\rm dR}  \ar@{->}[u]
}
$$
Define algebraic functions $f^\pm : T \to \E$ as 
$f^\pm(t)  := \det(t^\pm).$  
Here, $E$ is viewed as the scheme over $\Q$ whose points with coefficients in a $\Q$-algebra $R$ is $E \otimes R,$ and 
$f^\pm$ maps $t \in T(R)$ to the determinant of $t^\pm : M^\pm \otimes R \to M_{\rm dR}/\cF^n \otimes R,$ computed with 
respect to the $E$-bases fixed above; $f^\pm(t)$ is in $E \otimes R$ and as an 
element of $(E \otimes R)/ E^*$ it is independent of choice of bases. 
 
\smallskip

Let $\chi_{\rm dR} : P_{\rm dR} \to \E^*$ be the determinant of the action of $P_{\rm dR}$ on $M_{\rm dR}/\cF^n.$
One has 
\begin{equation}
\label{eqn:c-pm-alg}
c^\pm(M) \ = \ f^\pm(\fC), 
\end{equation}
where $\fC \in T(\C)$ is the comparison isomorphism. Furthermore, for all $t \in T$, $p \in P_{\rm dR}$ and $z\in Z(\sF_\infty)$ 
we have the equivariance: 
\begin{equation}
\label{eqn:equivariance}
f^\pm(p t z) \ = \ \chi_{\rm dR}(p) f^\pm(t) \chi^\pm(z).
\end{equation}

\begin{lem}
\label{lem:f+/f-}
Under the assumption (i), on the connected component of $T(\C)$ containing $\fC$, $f^+$ and $f^-$ are generically invertible and 
$f^+/f^-$ is a constant. 
\end{lem}

\begin{proof}
By \eqref{eqn:c-pm-alg}, $f^+$ and $f^-$, with values in $\E \otimes \C,$ are invertible at the point $\fC \in T(\C)$, and hence in a Zariski neighbourhood 
of $\fC$. In this neighbourhood, it follows from \eqref{eqn:equivariance} that $f^+/f^-$ is constant on the double coset $P_{\rm dR}(\C) \fC Z(\sF_\infty)^0(\C).$
It suffices to check that this double coset contains an open neighbourhood of $\fC$. If we identify $G(\fT)_{\rm B}(\C)$ with $T(\C)$ by $g \mapsto \fC g,$ 
then this means showing that ${\rm Lie}(G(\fT)_{\rm B}(\C))$ is the sum of ${\rm Lie}(Z(\sF_\infty)(\C))$ and ${\rm Lie}(P_{\rm dR}(\C)).$
Indeed, the adjoint representation of $G(\fT)_{\rm B}$ is the Betti realization of the system of realizations of weight $0$ it defines; call it $N$. In it, 
${\rm Lie}(Z(\sF_\infty))$ is $N^+$ while ${\rm Lie}(P_{\rm dR})$ is $\cF^0.$ One concludes by observing that in any system of realizations $N$ of weight $0$, 
$N_{\rm B} \otimes \C$ is the sum of $N_{\rm B}^+ \otimes \C$ and $\cF^0.$
\end{proof}

\medskip 
\noindent{\it Proof of Thm.\,\ref{thm:criterion-M-V} (i).}
The scheme $T$ is over $\Q$. The connected component $T(\C)_0$ of $T(\C)$ containing $\fC$ is defined over 
$K_0 \subset \C,$ 
meaning that $T_{K_0}$ has a connected component $T_{K_0}^0$ with $T_{K_0}^0(\C) = T(\C)_0.$ On the scheme $T_{K_0}^0$ over $K$, $f^+/f^-$ is constant. The constant value must be in 
$(\E \otimes K_0)^*.$ 

\medskip 
\noindent{\it Proof of Thm.\,\ref{thm:criterion-M-V} (ii).}
The union $U:=P_{\rm dR} (\C) T(\C)_0 Z(\sF_\infty)(\C)$ of connected components of $T(\C)$ is defined over $K_1$, and it follows from (\ref{lem:f+/f-}) that $f^+$ and $f^-$ are generically invertible on this union. As $f^+$ and $f^-$ are defined over $\Q$, the union $U_1$ of the connected components of $U$ on which $f^+/f^-$ is constant is again defined over $K_1$. On it, $f^+/f^-$ is constant. The constant value must be in $\E \otimes K_1.$ 
\end{proof}

\medskip
\begin{remark}{\rm 
Suppose that $M$ has even weight 0, that $M^{0,0}\neq 0$, and that $\sF_\infty$ acts on $M^{0,0}$ either by $+1$, or by $-1$, so that $c^\pm(M)$ is defined. 
Let $\varphi : P_{\rm dR}\rightarrow E^*$ be the determinant character of the action of the action $P_{\rm dR}$ on $Gr^0_\cF(M_{\rm dR})$. Then, the conclusion of theorem continues to hold, 
under the additional assumption, for (i), that $\varphi$ is trivial on the connected component $P^0_{\rm dR}$ of $P_{\rm dR}$, and for (ii), that $\varphi$ is trivial. This is a special case of the results of the next section, and the proof is essentially the same.
}\end{remark}

\medskip

\begin{remark}{\rm 
In all the examples that we consider in the next section, $G(\fS)^0$ is reductive. 
When this is the case, one could have argued that 
$P_{\rm dR} \cap G(\fS)^0(\C)$ is a parabolic subgroup, and that $Z(\sF_\infty) \cap G(\fS)^0$ is the fixed points of an involution, to obtain 
that on any connected component of $T(\C)$, some double coset for the left action of $P_{\rm dR}(\C)^0$ and the right action of 
$Z(\sF_\infty)^0(\C)$ is dense. 
}\end{remark}

\bigskip

\subsection{Multiplicative relations between determinants of periods}
\label{sec:Multiplicative relations}

\subsubsection{} 

As in \ref{sec:mult-lin-alg}, let $(M_{\alpha})_{\alpha \in A}$ be a finite family of Betti--de Rham realizations, let $\fS$ be a multilinear structure, and define $G(s)_{\rm B}$ to be its automorphism group in the Betti realization.

\smallskip

Let $(V_i)$ be a finite family of representations of $G(s)_{\rm B}$ of weights $w_i$ on $E$-vector spaces. The $M_i := M^{V_i}$ are with coefficients in $E$, and one can attach to each of them the following determinants of periods, in $(\E^* \otimes \C)/E^*$ :

\begin{enumerate}
\item[(a)] $c^{\pm}_E(M_i).$ When $w_i$ is even, $w_i=2p$, they are defined only under the condition that $\sF_{\infty}$ acts on $(M^{V_i})^{p,p}$ either by $+1$, or by $-1$.
\smallskip
\item[(b)] The determinant $\delta_E(M_i)$ of the comparison isomorphism $M_{i, {\rm B}} \otimes \C \xrightarrow{\sim} M_{i, {\rm dR}} \otimes \C$, computed using $E$-bases of 
$M_{i, {\rm B}}$ and $M_{i, {\rm dR}}.$
\end{enumerate}

\smallskip

To each of those is associated characters $\chi^\pm : Z(\sF_{\infty})\to E^*$ and a character $\varphi : P_{\rm dR}\to \E^*.$ The character $\chi^\pm$ of $Z(\sF_{\infty})$ is respectively the determinant of the action of $Z(\sF_{\infty})$ on $M^+_i$ and $M^-_i.$ The character of $P_{\rm dR}$ is the determinant of the action of $P_{\rm dR}$ on $M_{i_{\rm dR}}/\cF^p_{\rm dR}$, where this quotient of $M_{i_{\rm dR}}$, complexified, is the target used in the definition of the respective $c^{\pm}$ or $\delta$.

\smallskip

Let now $m$ be a monomial (with exponents in $\Z$) in the $c^{\pm}_E (M_i)$ and $\delta(M_i)$, and let $\chi_m$ and $\varphi_m$ be the corresponding product of powers of the associated characters of $Z(\sF_\infty)$ and $P_{\rm dR}$. With the same definition of $K_0$ and $K_1$ as before, one has

\smallskip

\begin{thm}
\label{thm:fancy}
The image of $m$ in $(E^* \otimes \C)/(E^* \otimes K^0)^*$ (resp., in $(\E^* \otimes \C)/(E\otimes K^1)^*$) is uniquely determined the restriction of $\chi_m$ and $\varphi_m$ to $Z(\sF_\infty)^0$ and $P^0$ (resp., by $\chi_m$ and $\varphi_m$). Special cases: If $G(s)_{\rm B}$ is connected, the image of $m$ in 
$(\E^* \otimes \C)/E^*$ is uniquely determined by $\chi_m$ and $\varphi_m$. If further $\chi_m$ and $\varphi_m$ are trivial, $m$ is in $\E^*$.
\end{thm}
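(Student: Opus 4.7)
The plan is to imitate and extend the argument of Theorem~\ref{thm:criterion-M-V}, reading each monomial in the basic invariants as the value at the comparison point $\fC \in T(\C)$ of a rational function on the torsor $T$ that is equivariant for the left action of $P_{\rm dR}$ and the right action of $Z(\sF_\infty)$ via its associated pair of characters $(\varphi, \chi)$.

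First, I would verify that every basic invariant fits into this framework. For $c^\pm_\E(M_i)$ this was already exhibited in \eqref{eqn:c-pm-alg}--\eqref{eqn:equivariance}. For $\delta_\E(M_i)$, one has by definition $\delta_\E(M_i) = f_\delta(\fC)$ where $f_\delta(t) := \det_\E(t : M_{i,{\rm B}} \to M_{i,{\rm dR}})$, and this function on $T$ is equivariant under the full determinant characters of $P_{\rm dR}$ on $M_{i,{\rm dR}}$ and of $Z(\sF_\infty)$ on $M_{i,{\rm B}}$. A monomial $m$ then corresponds to a product $f_m := \prod f_j^{n_j}$, a rational function on $T$ which is $(\varphi_m, \chi_m)$-equivariant. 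Each basic $f_j$ is invertible at $\fC$ (by the criticality/vanishing-middle-Hodge hypothesis for $c^\pm$; automatically for $\delta$, since a comparison isomorphism has nonzero determinant), so $f_m$ is invertible in a Zariski neighbourhood of $\fC$.

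The heart of the argument is then invariance. Given two monomials $m_1, m_2$ with $\chi_{m_1}|_{Z(\sF_\infty)^0} = \chi_{m_2}|_{Z(\sF_\infty)^0}$ and $\varphi_{m_1}|_{P_{\rm dR}^0} = \varphi_{m_2}|_{P_{\rm dR}^0}$, the ratio $f_{m_1}/f_{m_2}$ is $(P_{\rm dR}^0 \times Z(\sF_\infty)^0)$-invariant as a rational function on $T$. The density argument of Lemma~\ref{lem:f+/f-}, applied to the adjoint representation of $G(\fS)_{\rm B}$ (a weight-$0$ Betti--de~Rham system), shows that $P_{\rm dR}^0(\C) \cdot \fC \cdot Z(\sF_\infty)^0(\C)$ is open in the connected component $T(\C)_0$ of $\fC$; so $f_{m_1}/f_{m_2}$ is constant on $T(\C)_0$. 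Since $T(\C)_0$ is defined over $K_0$, this constant lies in $\E \otimes K_0$, proving assertion (i). For the sharper assertion (ii), equality of the full characters $\chi_{m_1} = \chi_{m_2}$ and $\varphi_{m_1} = \varphi_{m_2}$ upgrades the invariance to all of $P_{\rm dR} \times Z(\sF_\infty)$, so $f_{m_1}/f_{m_2}$ is constant on the single orbit $U = P_{\rm dR}(\C) \cdot \fC \cdot Z(\sF_\infty)(\C)$, which by construction is defined over $K_1$; this places the constant in $\E \otimes K_1$. The two listed special cases are then formal: $G(\fS)_{\rm B}$ connected forces $U = T(\C)$ and $K_1 = \Q$ as recorded in the paper, and comparing $m$ with the unit monomial under the additional assumption that $\chi_m$ and $\varphi_m$ are trivial places $m$ itself in $\E^*$.

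The proof is essentially an exercise in bookkeeping: no genuinely new geometric input is needed beyond Theorem~\ref{thm:criterion-M-V}, since the crucial openness of the $(P_{\rm dR}^0, Z(\sF_\infty)^0)$-double coset through $\fC$ is already encapsulated in Lemma~\ref{lem:f+/f-}. The one subtle point to check carefully for (ii) is that $U$ is a single $(P_{\rm dR} \times Z(\sF_\infty))$-orbit defined over $K_1$, on which any $(P_{\rm dR} \times Z(\sF_\infty))$-invariant rational function is literally constant; this is clear from the construction of $K_1$ as the field of definition of $U$ and from the fact that $U$ is the left-right saturation of the open orbit through $\fC$.
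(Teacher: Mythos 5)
Your proposal is correct and follows exactly the route the paper intends: the paper's own proof is the one-line remark that $m = f(\fC)$ for a function on $T$ with the covariance $f(ptz)=\varphi_m(p)f(t)\chi_m(z)$, and then the argument of Thm.\,\ref{thm:criterion-M-V} (openness of the double coset via Lemma \ref{lem:f+/f-}, rationality of $T(\C)_0$ over $K_0$, resp.\ of $U$ over $K_1$) applies verbatim. The only slight imprecision is calling $U$ a ``single orbit'': it is the $(P_{\rm dR},Z(\sF_\infty))$-saturation of the whole component $T(\C)_0$, i.e.\ a union of connected components, and one should argue (as the paper does) that the invariant function takes the same constant value on all of them because they are swept out from $T(\C)_0$ by the group actions under which $f_{m_1}/f_{m_2}$ is invariant.
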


\medskip

The proof is the same as before: $m$ is the value at $\fC$ of a function $f$ on $U \subset T$, with values in $\E$, with the covariance property 
$$
f(p t z)=\varphi_m (p) f(t) \chi_m (z).
$$

\bigskip
\subsubsection{\bf Grothendieck period conjecture.}

If the $M_{\alpha}$'s come from motives $\M_\alpha$'s, and if $\fS$ consists of all multilinear structures on the motives $\M_\alpha$, so that $G(\fS)$ is the 
``motivic Galois group" of the family of $\M_\alpha$, Grothendieck conjectures that $\fC$ is Zarisky dense in $T$.
This conjecture implies that if monomials $m(j)$ as in Sect.\,\ref{sec:Multiplicative relations} 
give rise to multiplicatively independent characters $(\chi (j), \varphi(j)),$ the $m(j)$ are algebraically independent. 

\medskip

\begin{ex}
Let $M$ be a Betti--de Rham system of realizations of dimension 2, weight $w$ and two nonzero Hodge numbers $h^{pq} = h^{qp} = 1.$ We give it no multilinear structure, so that $G_{\rm B} := G_{\rm B}(\phi)$ is $\rm GL(M_{\rm B}).$ In a suitable basis $e^+,\ e^-$ of $M_{\rm B},$ $\sF_{\infty}$ is the diagonal matrix ${\rm diag}(+1, -1)$ and its centralizer $Z(\sF_{\infty})$ is, in that basis, the group of diagonal matrices. The characters of $Z(\sF_{\infty})$ are the ${\rm diag}(a, b) \mapsto a^m b^n,$ for integers $m, n.$ Suppose $p > q.$ 
In a basis $(e, f)$ of $M_{\rm dR},$ with $e$ spanning $\cF^p_{\rm dR}, P_{\rm dR}$ is the group of upper triangular matrices. Its characters are 
$$
\begin{pmatrix} a & b \\0 & d \end{pmatrix} \longmapsto a^r\, d^s
$$
for integers $r,s$. If $V$ is an irreducible representation of $G_{\rm B}$, the corresponding characters of $Z(\sF_{\infty})$ and $P_{\rm dR}$ are hence given by four integers $(m, n, r, s).$ Expressing that the scalar matrices are in both $Z(\sF_{\infty})$ and $P$, one finds that $m+n = r+s.$
For the representation $M_{\rm B}$ itself, one finds that the characters corresponding to $c^+, c^-$ and $\delta$ are respectively given by the $4$-tuples 
$$
c^+ \ :\ (1,0,0,1), \quad \ 
c^-  \ :\ (0,1, 0, 1), \quad \ 
\delta \ :\ (1,1,1,1).
$$
The $\Z$-span of these three $4$-tuples is the additive group of all integral $4$-tuples $(m, n, r, s)$ for which $m+n = r+s.$ It follows that for any irreducible representation $V$ of 
$\rm GL(M_{\rm B}),$ $c^+ (M^V),$ $c^-(M^V),$ and $\delta(M^V)$ are, mod $\Q^*,$ monomials in $c^+(M),$ $c^-(M),$ and $\delta(M).$ Computing the relevant $m, n, r, s,$ one finds the formula of \cite[Prop.\,7.7]{deligne} computing the $c^{\pm}(\Sym^n M).$

\smallskip

If $M$ comes from a motive $\M$ and if ${\sf s}$ is all multilinear structures of $M.$ The group $G_{\rm B} (\C)$ is at least as big as a Cartan normalizer. Indeed, it contains the $\G_m \times \G_m$ giving the Hodge decomposition for $M_{\rm B }\otimes \C,$ as well as $F_{\infty}$ which permutes the two factors $\G_m.$ If it is strictly bigger, it must be the full $\rm GL.$ In this non-CM case, Grothendieck period conjecture implies that $c^+(M)$ and $c^-(M)$ are algebraically independent.

\smallskip

From the automorphic point of view, we are here considering classical holomorphic new cuspforms of weight $k \geqslant 2.$ Such forms give rise to $M$ as above, with 
$h^{k-1, 0} = h^{0, k-1} = 1.$

\end{ex}

\bigskip

\section{Examples}
\label{sec:examples}

\subsection{Tensor product motives over $\Q$}
\label{sec:tensor-product-motives}

Let $M'$ and $M''$ be Betti--de Rham systems of realizations over $\Q$ with coefficients in $\E,$ and $M$ their tensor product over $E.$

\begin{prop}
\label{prop:tensor-product-even-even}
Suppose that $M'$ and $M$ have vanishing middle Hodge numbers. For $M''$, let 
$\delta'' = \dim(M''^+) - \dim(M''^-).$ Then 
$$
\frac{c^+(M)}{c^-(M)} \ \approx_\E \  \left(\frac{c^+(M')}{c^-(M')}\right)^{\delta''}. 
$$
In particular, if we assume $\dim(M^{''+}) = \dim(M^{''-})$, which is the case if $M''$ also has vanishing middle Hodge number, then 
$c^+(M) \approx_\E c^-(M).$
\end{prop}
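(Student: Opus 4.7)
The plan is to deduce the proposition from Theorem~\ref{thm:fancy} applied to the family $(M_\alpha) = (M', M'')$ with the multilinear structure $\fS$ encoding only the two $E$-actions (expressed, as in \S\ref{sec:criterion}, by morphisms $\Q(0) \to M'^{\sf v} \otimes M'$ and $\Q(0) \to M''^{\sf v} \otimes M''$ indexed by an $E$-basis of $E$). For this $\fS$ one has
$$G(\fS)_{\rm B} \ = \ \GL_E(M'_{\rm B}) \times \GL_E(M''_{\rm B}),$$
which is connected, and the representations recovering $M'$, $M''$, and $M$ are the two tautological ones and their outer $E$-tensor product. In the connected special case of Theorem~\ref{thm:fancy}, a monomial $m$ in the $c^\pm$'s lies in $E^*$ as soon as both associated characters $\chi_m$ of $Z(\sF_\infty)$ and $\varphi_m$ of $P_{\rm dR}$ are trivial. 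I therefore form
$$m \ := \ \frac{c^+(M)}{c^-(M)} \cdot \left(\frac{c^-(M')}{c^+(M')}\right)^{\delta''}$$
and verify the two trivialities in turn.

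For $\chi_m$, write $Z(\sF_\infty) = \GL_E(M'^+) \times \GL_E(M'^-) \times \GL_E(M''^+) \times \GL_E(M''^-)$ with typical element $(g_1^+, g_1^-, g_2^+, g_2^-)$, and use the eigenspace splittings
$$M^+ \ = \ (M'^+ \otimes_E M''^+) \oplus (M'^- \otimes_E M''^-), \qquad M^- \ = \ (M'^+ \otimes_E M''^-) \oplus (M'^- \otimes_E M''^+).$$
A direct determinant computation gives
$$\frac{\chi^+_M}{\chi^-_M}(g_1^+, g_1^-, g_2^+, g_2^-) \ = \ \left(\frac{\det g_1^+}{\det g_1^-}\right)^{\delta''} \left(\frac{\det g_2^+}{\det g_2^-}\right)^{\dim_E M'^+ \, - \, \dim_E M'^-}.$$
The vanishing-middle-Hodge hypothesis on $M'$ makes $\sF_\infty$ pair $M'^{p,q}$ with $M'^{q,p}$ without any middle contribution, so $\dim_E M'^+ = \dim_E M'^-$ and the second factor drops. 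What remains is precisely $(\chi^+_{M'}/\chi^-_{M'})^{\delta''}$, so $\chi_m$ is trivial.

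For $\varphi_m$, the vanishing middle Hodge number of $M$ means that $c^+(M)$ and $c^-(M)$ are the determinants of maps with the \emph{same} target $M_{\rm dR}/\cF^n$ (where $2n$ or $2n+1$ is the weight of $M$); hence the two associated characters of $P_{\rm dR}$ coincide and their ratio contributes trivially. The identical argument handles the $M'$ side, so $\varphi_m$ is also trivial. Theorem~\ref{thm:fancy} then yields $m \in E^*$, which is the claimed relation; the ``in particular'' clause is the case $\delta'' = 0$. The principal piece of bookkeeping is making the character calculation on $Z(\sF_\infty)$ collapse cleanly --- the parity forced by the vanishing-middle-Hodge hypotheses is the crucial input, and once one commits to the $\GL_E$'s (rather than $\GL_\Q$'s) in the setup, everything else is essentially formal.
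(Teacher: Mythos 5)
Your proof is correct and takes essentially the same route as the paper's: the same group $\GL_E(M'_{\rm B})\times\GL_E(M''_{\rm B})$, the same eigenspace decomposition of $M^{\pm}$, the same determinant computation yielding $\chi^+_M/\chi^-_M=(\chi^+_{M'}/\chi^-_{M'})^{\delta''}$ after using $\dim M'^+=\dim M'^-$, and the same cancellation of the $P_{\rm dR}$-characters coming from the vanishing middle Hodge numbers of $M$ and $M'$. There is nothing to add.
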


This result is due to Yoshida \cite{yoshida}, and our Sect.\,\ref{sec:criterion} formalizes his argument. In Sect.\,\ref{sec:tensor-product-motives-general}, we will show that the same result holds for systems of realizations over a totally real field. 

\begin{proof}
As multilinear structures over $M'$ and $M''$, we take only the structures of $E$-modules. The group $G(\fS)_{\rm B}$ to consider is 
$\GL_E(M'_{\rm B}) \times \GL_E(M''_{\rm B})$, and $M$ corresponds to the representation $M'_{\rm B} \otimes M''_{\rm B}$. The centralizer of 
$\sF_\infty$ is the product of its centralizers in $\GL_E(M'_{\rm B})$ and $\GL_E(M''_{\rm B}).$ The assumption of vanishing middle Hodge number of 
$M'$ implies that its $E$-dimension is even, say, $2n'$; one has 
$\dim(M'^+) = \dim(M'^-) = n'$. For $M''$, define
$$
n''(+) := \dim(M''^+), \quad n''(-) \ := \ \dim(M''^-),
$$
hence $\delta'' = n''(+) - n''(-).$
If $V$ and $W$ are vector spaces (over $E$), then the determinant $\det(A \otimes B)$ of the action of $(A,B) \in \GL(V) \otimes \GL(W)$ 
on $V \otimes W$ is given by $\det(A)^{\dim(W)} \det(B)^{\dim(V)}.$ From the isomorphisms 
$$
M^+ \ = \ M'^+ \otimes M''^+ \ \oplus \ M'^- \otimes M''^-, \quad M^- \ = \ M'^+ \otimes M''^- \ \oplus \ M'^- \otimes M''^+,
$$
we get for the determinant of the action of $Z(\sF_\infty)$
\begin{eqnarray*}
\chi_M^+ & = & (\chi_{M'}^+)^{n''(+)} (\chi_{M'}^-)^{n''(-)} \cdot (\chi_{M''}^+ \chi_{M''}^-)^{n'} \\ 
\chi_M^- & = & (\chi_{M'}^+)^{n''(-)} (\chi_{M'}^-)^{n''(+)} \cdot (\chi_{M''}^+ \chi_{M''}^-)^{n'}, 
\end{eqnarray*}
so that $\chi_M^+/\chi_M^- = (\chi_{M'}^+/\chi_{M'}^-)^{\delta''}.$ 
We deduce the following relation from Thm.\,\ref{thm:fancy}:
\begin{equation}
\label{eqn:tensor-product-even-odd}
\frac{c^+(M)}{c^-(M)} \ \approx_\E \  \left(\frac{c^+(M')}{c^-(M')}\right)^{\delta''}. 
\end{equation}
It remains to check that both sides correspond to the same character of $P_{\rm dR}$, which is the product of the 
$P_{\rm dR}$ for $\GL(M'_{\rm dR})$ and $\GL(M''_{\rm dR}).$ Indeed, as $M$ as well as $M'$ have vanishing middle Hodge numbers, 
the relevant characters are trivial. 

\smallskip

Furthermore, if we assume $\delta'' = 0$, then the second assertion in the proposition trivially follows from the first.  
\end{proof}

\medskip



\medskip
\subsubsection{}
\label{sec:odd-odd}
Let us now assume only that $c^+$ and $c^-$ are defined for $M'$, $M''$, and $M$, and that both $M'$ and $M''$ have nonvanishing 
middle Hodge numbers. Let $2p'$ and $2p''$ be the weights of $M'$ and $M''$, and define $p = p'+p''.$ By assumption, 
$\sF_\infty$ acts by a sign, i.e., by either $+1$ or $-1$, on $M'^{p'p'};$ call $s(M')$ this sign. Similarly for $M''.$ The assumption that 
$c^\pm(M)$ are defined implies that $M'^{p'p'} \otimes M''^{p''p''} \simeq M^{pp},$ and that $s(M) = s(M')s(M'').$ With the notations 
of Prop.\,\ref{prop:tensor-product-even-even}, one has $\delta'' = s(M'') \dim(M''^{p''p''}).$ Defining similarly $\delta'$ for $M'$, we get the following proposition:

\begin{prop}
\label{prop:tensor-product-odd-odd}
Under the assumptions of \ref{sec:odd-odd}, one has: 
$$
\frac{c^+(M)}{c^-(M)} \ \approx_\E \  \left(\frac{c^+(M')}{c^-(M')}\right)^{\delta''}\left(\frac{c^+(M'')}{c^-(M'')}\right)^{\delta'}. 
$$
\end{prop}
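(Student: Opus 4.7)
The plan is to mimic the proof of Prop.\,\ref{prop:tensor-product-even-even}, applying Thm.\,\ref{thm:fancy} to the monomial
$$
m \ := \ \frac{c^+(M)}{c^-(M)} \cdot \left(\frac{c^+(M')}{c^-(M')}\right)^{-\delta''} \cdot \left(\frac{c^+(M'')}{c^-(M'')}\right)^{-\delta'},
$$
and showing that both attached characters $\chi_m$ of $Z(\sF_\infty)$ and $\varphi_m$ of $P_{\rm dR}$ are trivial. The key new feature compared to the vanishing-middle-Hodge-number case is that the target spaces used in defining $c^+(N)$ and $c^-(N)$ are no longer identical; they differ by one step in the Hodge filtration, with the step determined by the sign $s(N)$. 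Hence $\varphi_m$ does not vanish term-by-term but must cancel across the three factors.

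As before, I would take $G(\fS)_{\rm B}=\GL_\E(M'_{\rm B}) \times \GL_\E(M''_{\rm B})$, so that $M$ corresponds to the tensor product representation and the centralizer of $\sF_\infty$ splits as $Z' \times Z''$. Using the decompositions
$$
M^+ = (M'^+ \otimes M''^+) \oplus (M'^- \otimes M''^-), \qquad M^- = (M'^+ \otimes M''^-) \oplus (M'^- \otimes M''^+),
$$
together with $\det(A \otimes B) = \det(A)^{\dim W}\det(B)^{\dim V}$ and now tracking $n'(\pm)$ and $n''(\pm)$ separately (not assuming $n'(+)=n'(-)$), I would compute
$$
\chi_M^+/\chi_M^- \ = \ (\chi_{M'}^+/\chi_{M'}^-)^{n''(+)-n''(-)} \cdot (\chi_{M''}^+/\chi_{M''}^-)^{n'(+)-n'(-)}.
$$
Since $n'(+)-n'(-)=\delta'$ and $n''(+)-n''(-)=\delta''$, this shows $\chi_m$ is trivial.

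For $\varphi_m$ the main work begins. For any system $N$ of even weight $2p_N$ with $N^{p_N,p_N}\neq 0$ on which $\sF_\infty$ acts by the sign $s(N)$, I would observe that the targets of $c^+(N)$ and $c^-(N)$ are $N_{\rm dR}/\cF^{p_N+1}$ and $N_{\rm dR}/\cF^{p_N}$ in the order dictated by $s(N)$, so that the $P_{\rm dR}$-character attached to $c^+(N)/c^-(N)$ is
$(\det_{P_{\rm dR}} \mathrm{Gr}^{p_N}_\cF N_{\rm dR})^{s(N)}$.
The two facts to exploit for cancellation are the tensor-product identification $\mathrm{Gr}^p_\cF M_{\rm dR} \cong \mathrm{Gr}^{p'}_\cF M'_{\rm dR} \otimes_\E \mathrm{Gr}^{p''}_\cF M''_{\rm dR}$ and the multiplicativity $s(M)=s(M')s(M'')$; combined with $\delta' = s(M')\dim\mathrm{Gr}^{p'}_\cF M'_{\rm dR}$ and $\delta'' = s(M'')\dim\mathrm{Gr}^{p''}_\cF M''_{\rm dR}$, the same determinant bookkeeping that trivialized $\chi_m$ trivializes $\varphi_m$ as well.

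Finally, since $G(\fS)_{\rm B}$ is a product of general linear groups and hence connected, Thm.\,\ref{thm:fancy} forces $m\in\E^*$. The main obstacle is precisely the $\varphi_m$ step: one must track the sign $s$ through the definition of $c^\pm$ and recognize that $s(M)=s(M')s(M'')$ is exactly what aligns the three distinct graded pieces of the Hodge filtration into a single matching determinant character, with exponents on the right-hand side that must in turn exactly match the $Z(\sF_\infty)$-exponents $\delta'$ and $\delta''$.
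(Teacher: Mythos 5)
Your proposal is correct and is essentially the paper's own proof: the $Z(\sF_\infty)$-computation is the unsymmetrized version of the one in Prop.\,\ref{prop:tensor-product-even-even}, and the $P_{\rm dR}$-characters are matched via $\det\mathrm{Gr}^{p}_{\cF}(M_{\rm dR})^{s(M)}=\bigl[\det\mathrm{Gr}^{p'}_{\cF}(M'_{\rm dR})^{s(M')}\bigr]^{\delta''}\cdot\bigl[\det\mathrm{Gr}^{p''}_{\cF}(M''_{\rm dR})^{s(M'')}\bigr]^{\delta'}$, exactly as in the text. The only step worth writing out more explicitly is that the identification $\mathrm{Gr}^p_\cF(M_{\rm dR})\cong\mathrm{Gr}^{p'}_\cF(M'_{\rm dR})\otimes_\E\mathrm{Gr}^{p''}_\cF(M''_{\rm dR})$ you invoke is not formal for a tensor product (a priori $\mathrm{Gr}^p(M_{\rm dR})$ contains all summands $\mathrm{Gr}^a(M'_{\rm dR})\otimes\mathrm{Gr}^{p-a}(M''_{\rm dR})$); it holds here because those cross-terms with $a\neq p'$ vanish, which is exactly what the standing assumption of \ref{sec:odd-odd} that $\sF_\infty$ acts by a sign on $M^{p,p}$ (equivalently $M^{p,p}\simeq M'^{p'p'}\otimes M''^{p''p''}$) forces.
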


Concerning the relevant characters of $Z(\sF_\infty)$, the proof is the same, but the relevant characters of $P_{\rm dR} = P'_{\rm dR} \times P''_{\rm dR}$ 
are now nontrivial. For $c^+(M)/c^-(M)$, it is given by $\det {\rm Gr}^p_\cF(M_{\rm dR})^{s(M)}.$ Similarly for
$c^+(M')/c^-(M')$ and $c^+(M'')/c^-(M'').$ One concludes by using that
\begin{eqnarray*}
\det {\rm Gr}^p_F(M_{\rm dR})^{s(M)} 
& = & 
\left( \det {\rm Gr}^{p'}_\cF(M'_{\rm dR})^{\dim(M''^{p''p''})} \cdot \det {\rm Gr}^{p''}_\cF(M''_{\rm dR})^{\dim(M'^{p'p'})}   \right)^{s(M)} \\ 
& = & 
\left[ \det {\rm Gr}^{p'}_\cF(M'_{\rm dR})^{s(M')} \right]^{\delta''} \cdot 
\left[ \det {\rm Gr}^{p''}_\cF(M''_{\rm dR})^{s(M'')} \right]^{\delta'}. 
\end{eqnarray*}

\medskip
\subsubsection{}
As explained in \cite{bhagwat-raghuram-mrl}, Prop.\,\ref{prop:tensor-product-even-even} 
gives a direct motivic explanation, via \cite{deligne}, of the main results of \cite{harder-raghuram-book} when the base field is $\Q$. 

It is worth noting that if $M'$ (resp., $M''$) is the motive of even (resp., odd) dimension, say, $2n'$ (resp., $2n''+1$), conjecturally attached to a cohomological cuspidal automorphic representation $\sigma'$ (resp., $\sigma''$)  of $\GL(2n')$ (resp., $\GL(2n''+1)$), then because the Langlands parameter of the representation at infinity is regular, one has $\delta' = 0$ and 
$\delta'' = \pm 1$. This explains why in \cite{harder-raghuram-book}, for the theorem 
on ratios of $L$-values for a Rankin--Selberg $L$-function $L(s, \sigma' \times \sigma'')$ for $\GL(2n') \times \GL(2n''+1)$, one sees the relative period $\Omega(\sigma')$ of $\sigma'$ 
or its reciprocal; the relative period $\Omega(\sigma')$ corresponds to the ratio $c^+(M')/c^-(M')$ in the right hand side of Prop.\,\ref{prop:tensor-product-even-even}, and whether it or its reciprocal appears is dictated by $\delta'' = \pm 1$.

\medskip
\subsubsection{Example: The tensor product of two rank-two motives-I}
\label{sec:prod-rank-two-motives-1}
For $i = 1,2$, let $\varphi_i \in S_{k_i}(N, \omega_i)$ be a primitive holomorphic cuspidal modular form of weight $k_i \geq 2$, for $\Gamma_0(N)$, and nebetypus character 
$\omega_i$; primitive means that it is an eigenform, a newform, and normalized $a_1(\varphi_i) = 1.$ Assume (only for convenience) that $k_1$ and $k_2$ are even. 
Let $E$ be a number field which is large enough to contain all the Fourier 
coefficients of $\varphi_1$ and $\varphi_2$. Let $M_i = M(\varphi_i)$ be the rank-two motive over $\Q$ with coefficients in $E$ associated to $\varphi_i$ (see \cite{scholl}). 
Put 
$M = M_1 \otimes M_2.$ Assume that $k_1 \neq k_2$. Then $M_1$, $M_2,$ and $M$ have vanishing middle Hodge numbers; furthermore, the $L$-function of $M$, which is the 
degree-4 Rankin--Selberg $L$-function, has at least two critical points. 
We know from Prop.\,\ref{prop:tensor-product-even-even} that $c^+(M) \approx_\E c^-(M).$ This is compatible
with the statement that for the completed Rankin--Selberg $L$-function $L(m,\varphi_1 \times \varphi_2) \approx_E L(m+1,\varphi_1 \times \varphi_2),$ for $\min\{k_1,k_2\} \leq m <m+1 < \max\{k_1,k_2\}$, which follows from Shimura \cite[Thm.\,4]{shimura-mathann}. See also Blasius \cite{blasius-appendix}. This example also works Hilbert modular forms.

\medskip
\subsubsection{}
When the base field is a general number field the situation is more delicate. See below for results that hold for systems of realizations over number fields $L$ which 
require additional arguments: as is the case for 
$L$-functions, $c^+$ and $c^-$ are defined by first restricting from $L$ to $\Q$, but this operation does not commute with taking a tensor product or 
with the construction of group of automorphisms $G(\fS).$

\medskip
\subsection{Artin Motives}
\label{sec:artin}

Let us review Artin motives, which will be used in the next section. 
If, in Grothendieck's definition of motives, one considers only varieties of dimension zero, one obtains the Artin motives. 
They give rise to the Artin $L$-functions. 
An equivalent, more down to earth definition is that an Artin motive is a representation of $\Gal(\bQ/\Q)$ on a $\Q$-vector space, 
for $\bQ$ an algebraic closure of $\Q$. This category does not depend on a choice of $\bQ$. 
Let us take for $\bQ$ the algebraic closure of $\Q$ in $\C.$ The Betti--de Rham system of realizations attached to 
the representation $V$ is then given as follows: 
\begin{itemize}
\item $V_{\rm B}$: the vector space $V$; $\sF_\infty$ is the action of complex conjugation. 

\item $V_{\rm dR}$: $(V \otimes \bQ)^{\Gal(\bQ/\Q)},$ purely of Hodge filtration $0.$ 

\item Comparison isomorphism: the inclusion of $V_{\rm dR}$ in $V \otimes \bQ$ extends to an isomorphism of $V_{\rm dR} \otimes \bQ$ to $V \otimes \bQ$, 
which one complexifies. 
\end{itemize}

\medskip
\noindent{\bf Example:} 
Let $L$ be a finite extension of $\Q.$ Then $M := H^0(\Spec \,L)$ of Sect.\,\ref{sec:Betti--deRham} is an Artin motive. The set of complex points of 
$\Spec \,L$ is the set $\Hom(L,\C)$ of complex embeddings of $L$; $M_{\rm B}$ is hence $\Q^{\Hom(L,\C)}$, the sum of copies of $\Q$ indexed by 
$\Hom(L,\C).$ The de Rham cohomology $M_{\rm dR}$ is $L.$ That $L \otimes \C$ is a direct sum of copies of $\C$ indexed by $\Hom(L,\C)$ gives
the comparison isomorphism. This turns $H^0(\Spec \,L)$ into a ring in the category of Artin motives. (The reader is also referred to \cite[Sect.\,6]{deligne}.)

\medskip
\begin{prop}
\label{prop:artin-motives}
The functor from Artin motives (i.e., representations $V$ of $\Gal(\bQ/\Q)$) to Betti--de Rham system of realizations is fully faithful. 
\end{prop}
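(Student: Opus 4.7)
The plan is to verify faithfulness and fullness of the functor $V \mapsto (V, V_{\rm dR}, {\sf comp})$ separately. First I unpack the target morphisms: a morphism of Betti--de Rham systems from the image of $V$ to that of $W$ is a pair $(f_{\rm B}, f_{\rm dR})$, where $f_{\rm B}\colon V \to W$ is $\Q$-linear and commutes with $\sF_\infty$, where $f_{\rm dR}\colon V_{\rm dR}\to W_{\rm dR}$ is $\Q$-linear (the condition of preserving the Hodge filtration is automatic, since for Artin motives the entire space sits in filtration degree $0$), and where the two are compatible with the comparison isomorphism after complexification. On the source side, morphisms of Artin motives are $\Q$-linear, $\Gal(\bQ/\Q)$-equivariant maps $V\to W$, and the functor sends $f$ to $(f,\,f\otimes 1|_{V_{\rm dR}})$.

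Faithfulness is automatic: $V_{\rm B}= V$ as a $\Q$-vector space, so the Betti component of the image already determines the original map.

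For fullness, take a morphism $(f_{\rm B}, f_{\rm dR})$ in the target category; I claim $f_{\rm B}$ is $\Gal(\bQ/\Q)$-equivariant. The comparison isomorphism for $V$ is the $\bQ$-linear extension of the inclusion $V_{\rm dR}\hookrightarrow V\otimes\bQ$, and similarly for $W$. Compatibility with the comparison asserts that the $\bQ$-linear map $f_{\rm B}\otimes_\Q\bQ\colon V\otimes\bQ\to W\otimes\bQ$ restricts on the invariant subspace $V_{\rm dR}=(V\otimes\bQ)^{\Gal(\bQ/\Q)}$ to $f_{\rm dR}$; in particular it carries $V_{\rm dR}$ into $W_{\rm dR}$. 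By Galois descent, the inclusion $V_{\rm dR}\subset V\otimes\bQ$ induces an isomorphism $V_{\rm dR}\otimes_\Q\bQ\xrightarrow{\sim} V\otimes\bQ$ transforming the diagonal Galois action on the right into the action on the second factor alone on the left (since Galois acts trivially on $V_{\rm dR}$). A $\bQ$-linear map that carries $V_{\rm dR}$ into $W_{\rm dR}$ automatically commutes with this latter action, hence also with the diagonal action on $V\otimes\bQ$; restricting to $\Q$-points shows $f_{\rm B}$ itself is Galois-equivariant. The original pair $(f_{\rm B},f_{\rm dR})$ is then precisely the image of $f_{\rm B}$ under the functor.

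The whole argument is an instance of the standard Galois-descent equivalence between continuous representations of $\Gal(\bQ/\Q)$ on finite-dimensional $\Q$-vector spaces and their associated $\Q$-forms of semilinear Galois modules over $\bQ$, repackaged in Betti--de Rham vocabulary. No serious obstacle is present; the only point to handle with care is the translation of compatibility with the comparison isomorphism into the assertion that $f_{\rm B}\otimes\bQ$ preserves the subspace of Galois invariants, after which Galois descent delivers the conclusion at once.
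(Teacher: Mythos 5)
Your proof is correct and follows essentially the same route as the paper's: the paper's one-line argument recovers the Galois action on $V=V_{\rm B}$ by embedding $V_{\rm B}$ into $V_{\rm dR}\otimes\bQ$ via the comparison isomorphism and restricting the natural Galois action on the second factor, which is exactly the descent identification you spell out. Your version merely makes explicit the (correct) bookkeeping that a $\bQ$-linear map preserving the invariant subspaces commutes with the Galois action, hence that compatibility with the comparison forces equivariance of $f_{\rm B}$.
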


\begin{proof}
One recovers the action of $\Gal(\bQ/\Q)$ on $V = V_{\rm B}$ by embedding $V_{\rm B}$ in $V_{\rm dR} \otimes \bQ$ by 
the comparison isomorphism, and by restricting to $V_B$ the action of $\Gal(\bQ/\Q)$ on $V_{\rm dR} \otimes \bQ$. 
\end{proof}

\medskip
\begin{remark}
\label{rem:artin-motives}
Let $M$ be the Betti--de Rham system defined by the representation $V$ of $\Gal(\bQ/\Q)$, and let $\Gal$ be the image of $\Gal(\bQ/\Q)$ in 
$\GL(V)$. From the Tannakian point of view of Sect.\,\ref{sec:tannakian}, what Prop.\,\ref{prop:artin-motives} tells us is that if on $M$ one puts all
the multilinear structures it admits, the corresponding $G(\fS)$ is $\Gal.$ {\it Special case:} if $M = H^0(\Spec \,L),$ where $L$ is a finite extension of $\Q$, and if $N$ is the normal closure of $L$ in $\bQ \subset \C$, 
generated by the images of the embeddings of $L$ in $\bQ$, one has $G(\fS) = \Gal(N/\Q).$
\end{remark}

\medskip
\subsection{Systems of Betti--de Rham realizations over a number field $L$}
\label{sec:over-L}

The motivating example is the cohomology $H^n(X)$ of a projective nonsingular variety $X$ over $L$: for such a variety, if $\sigma$ is an embedding of 
$L$ into an algebraic closure $C$ of $\R$, $X(C)$ is defined and so is $H^n(X(C), \Q).$ One also has the de Rham cohomology $H_{\rm dR}^n(X)$, a 
vector space over $L.$

\medskip
\subsubsection{}
\label{sec:realizations_over_number_field_L}

A weight $w$ Betti--de Rham system of realization over $L$ consists of the following data $({\rm i}_L)-({\rm iii}_L)$ obeying $({\rm iv}_L)$:

\smallskip
\begin{enumerate}
\item[$({\rm i}_L)$] For $\sigma$ an embedding of $L$ into an algebraic closure $C$ of $\R$, a $\Q$-vector space $M_\sigma$, depending functorially
on $C.$ If for $C$ we take the standard algebraic closure $\C$ of $\R$, the functoriality in $C$ amounts to an involutive system  
$\sF_\infty : M_\sigma \to M_{c \sigma}$ of isomorphisms.  If $\sigma$ is real, i.e., when $\sigma = c \sigma$, the involution $\sF_\infty$ of $M_\sigma$ defines
a decomposition $M_\sigma = M_\sigma^+ \oplus M_\sigma^-$ into the $\pm 1$ eigenspaces.  

\smallskip
\item[$({\rm ii}_L)$] An $L$-vector space $M_{\rm dR}$, endowed with a finite decreasing filtration $\cF_{\rm dR}$, the Hodge filtration. 

\smallskip
\item[$({\rm iii}_L)$] For $\sigma$ as in $({\rm i}_L)$, a comparison isomorphism $M_\sigma \otimes_\Q C \to M_{\rm dR} \otimes_{L,\sigma} C$, which is 
functorial in $C$. 

\smallskip
\item[$({\rm iv}_L)$] One requires that for $\sigma$ as in $({\rm i}_L)$, if the complexified Hodge filtration is transported by the comparison isomorphism 
to a filtration, still denoted $\cF$, of $M_\sigma \otimes_\Q C$, it and its complex conjugate are $w$-opposite.
\end{enumerate}

By $({\rm iv}_L)$, for each $\sigma$, $M_\sigma$ is endowed with a Hodge structure of weight $w$. Its Hodge numbers 
$h_\sigma^{p,q} = \dim_L {\rm Gr}_{\cF}^p(M_{\rm dR})$ are independent of $\sigma.$

\medskip

The restriction of base field, from $L$ to $\Q$, of such a system is defined by:
\begin{eqnarray*}
\Res_{L/\Q}(M)_{\rm B} \ & := & \bigoplus_{\sigma : L \to \C} M_\sigma, \\
\Res_{L/\Q}(M)_{\rm dR} & := & M_{\rm dR} \ \mbox{viewed as a $\Q$-vector space.}
\end{eqnarray*}
One has 
$\Res_{L/\Q}(M)_{\rm dR} \otimes_\Q \C \ \simeq \ \oplus_{\sigma : L \to \C} \, M_{\rm dR} \otimes_{L,\sigma}\C,$ and the comparison isomorphism for 
$\Res_{L/\Q}(M)$ is the sum over all $\sigma: L \to \C$ of the comparison isomorphisms of $M.$ 

\medskip

One defines $c^\pm(M)$ to be $c^\pm(\Res_{L/\Q}(M))$. This parallels the fact that when $M$ comes from a motive $\M$ with an $L$-function 
$L(\M,s)$ (which is defined as an Euler product over all places of $L$), one has 
$$
L(\M,s) \ = \ L(\Res_{L/\Q}(\M),s).
$$

\medskip

In the case of the motivating example, if for $X \to \Spec\,L$, a variety over $L$, one defines $\amalg_{L/\Q}(X)$ to be the variety over $\Q$ as 
$X \to \Spec\,L \to \Spec\,\Q$, then one has 
$$
\Res_{L/\Q}(H^n(X)) \ = \ H^n(\amalg_{L/\Q}(X)). 
$$
For the Betti realization, $\amalg_{L/\Q}(X)(\C)$ is the disjoint union over embeddings $\sigma : L \to \C$ of the $\C$-points of $X_\sigma := X \times_{L, \sigma} \C$; 
hence, $H^n(\amalg_{L/\Q}(X)(\C), \Q) = \oplus_{\sigma : L \to \C} H^n(X_\sigma(\C), \Q).$

\medskip
\subsubsection{Description as module}
\label{sec:realizations_over_L_modules}

The functor $\Res_{L/\Q}$ identifies systems of realizations over $L$ with systems of realizations $M$ over $\Q$ provided with a module structure 
over $H^0(\Spec\,L)$. The inverse functor is described as follows. The module structure of $M_{\rm B}$ over $\Q^{\Hom(L,\C)}$ gives a decomposition 
of $M_{\rm B}$ into pieces $M_\sigma$ indexed by the embedding of $L$ into $\C.$ The module structure of $M_{\rm dR}$ over $L$ turns $M_{\rm dR}$ 
into the required de Rham realization. 

\medskip

Let $M$ be a system of realizations over $L$. With the notations as in \ref{rem:artin-motives}, if on $\Res_{L/\Q}(M)$ and 
$H^0(\Spec\,L)$ we take as multilinear structures all possible structures on $H^0(\Spec\,L)$ and the module structure of 
$\Res_{L/\Q}(M)$, the resulting group $G(\fS)_{\rm B}$ is an extension of $\Gal(N/\Q)$ by $\prod_\sigma \GL(M_\sigma)$, the 
product is over all embeddings $\sigma : L \to \C,$ where $N$ is the normal closure of $L$ in $\bQ \subset \C$. 
The group elements above $\alpha \in \Gal(N/\Q)$ are the systems 
of isomorphisms $M_\sigma \to M_{\alpha \sigma}.$ 

\medskip
\subsubsection{A remark on Hodge numbers}
\medskip

As before, a system with coefficients in $E$ is a system $M$ provided with an $E$-module structure $E \to \End(M).$ A crucial complication is that 
the $L \otimes E$-modules ${\rm Gr}^p(M_{\rm dR})$ are usually not free. A consequence is that when $M$ comes from a motive and that 
for each $\sigma: L \to \C$ a corresponding $L$-function $L^\sigma$ is defined, the factors $L^\sigma_v$ at infinite places $v$ of $L$ depend on 
$\sigma.$ But their product over $v$ is independent of $\sigma.$  Very briefly, a weight $w$ Betti--de Rham system of realization $M$ over a number field $L$ with coefficients in a number field $E$ consists of: 
\smallskip
\begin{itemize}
\item (Betti) For each embedding $\sigma$ of $L$ into $\C$, an $E$-vector space $M_\sigma$; together with a Hodge decomposition 
$M_\sigma \otimes \C = \oplus_{p+q = w} M_\sigma^{p,q}$ of $E \otimes \C$-modules; 
\item (Real Frobenius) an involutive isomorphism $\sF_\infty : M_\sigma \to M_{c \sigma}$ of $E$-vector spaces which upon complexification 
maps $M_\sigma^{p,q}$ to $M_{c \sigma}^{q, p};$
\item (de Rham) an $E \otimes L$-module $M_{\rm dR}$, endowed with a Hodge filtration $\cF_{\rm dR}$; and
\item (Comparison) a comparison isomorphism $M_\sigma \otimes_\Q \C \to M_{\rm dR} \otimes_{L,\sigma} \C$ of $E \otimes \C$-modules, 
inducing 
$M_\sigma^{p,q} = {\rm Gr}^p \cF_{\rm dR} \otimes_{L, \sigma} \C.$
\end{itemize}
In the Hodge decomposition, the summand $M_\sigma^{p,q}$ being an $E \otimes \C$-module amounts to specifying a $\C$-vector space 
$M_{\sigma, \tau}^{p,q}$ for every embedding $\tau : E \to \C.$ Define the Hodge numbers of $M$ as: 
\begin{equation}
\label{eqn:hodge-numbers}
h_{\sigma, \tau}^{p,q} \ = \ \dim_\C(M_{\sigma, \tau}^{p,q}).
\end{equation}

\medskip
\begin{prop}
\label{prop:strong-purity}
With notations as above, the Hodge number 
$h_{\sigma, \tau}^{p,q}$ depends only on the restriction of $\sigma$ to the largest CM or totally real subfield of the base field $L$, and 
only on the restriction of $\tau$ to the largest CM or totally real subfield of the coefficient field $E$.
\end{prop}

\begin{proof}
From $\sF_\infty$, one has
\begin{equation}
\label{eqn:hodge-number-complex-conj}
h_{\sigma, \tau}^{p,q} \ = \ h_{c\sigma, \tau}^{q,p}.
\end{equation}
Since $L$ and $E$ are number fields, $\sigma$ and $\tau$ take values in the algebraic closure $\bQ$ of $\Q$ in $\C$. There is an action 
of $L \otimes E$ on ${\rm Gr}^p \cF_{\rm dR}$, and $h_{\sigma, \tau}^{p,q} = \dim_{\bQ}({\rm Gr}^p \cF_{\rm dR} \otimes_{L, \sigma} \bQ);$
hence, for any $\gamma \in \Gal(\bQ/\Q)$ one has 
\begin{equation}
\label{eqn:hodge-number-gal}
h_{\sigma, \tau}^{p,q} \ = \ h_{\gamma \sigma, \gamma \tau}^{p,q}.
\end{equation}
From \eqref{eqn:hodge-number-complex-conj} and \eqref{eqn:hodge-number-gal}, 
$$
h_{\sigma, \tau}^{p,q} \ = \ 
h_{\gamma \sigma, \gamma \tau}^{p,q} \ = \ 
h_{c \gamma \sigma, \gamma \tau}^{q,p} \ = \ 
h_{\gamma^{-1} c \gamma \sigma, \tau}^{q,p} \ = \ 
h_{c \gamma^{-1} c \gamma \sigma, \tau}^{p,q}.
$$
Hence, $h_{\sigma, \tau}^{p,q} = h_{\eta\sigma, \tau}^{p,q}$ for all $\eta$ fixing the maximal CM subfield of $\bQ.$ The Galois set 
$\Hom(L, \bQ)$, modulo the action of all such $\eta$ corresponds to $\Hom(L_1, \bQ)$ for 
the maximal CM or totally real subfield $L_1$ of $L$. Applying \eqref{eqn:hodge-number-gal} to $\gamma = c$ we get 
\begin{equation}
\label{eqn:hodge-number-complex-conj-2}
h_{\sigma, \tau}^{p,q} \ = \ h_{\sigma, c\tau}^{q,p}, 
\end{equation}
and the same argument applies to $\tau.$
\end{proof}

\medskip
\begin{cor}
\label{cor:strong-purity}
If $L$ or $E$ admits a real embedding, one has 
$$
h_{\sigma, \tau}^{p,q} \ = \ h_{\sigma, \tau}^{q,p}.
$$
\end{cor}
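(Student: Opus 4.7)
The plan is to combine the invariance of $h_{\sigma, \tau}^{p,q}$ under restriction of $\sigma$ (respectively $\tau$) to the maximal CM or totally real subfield, which is the content of Prop.\,\ref{prop:strong-purity}, with the two basic symmetries $h_{\sigma, \tau}^{p,q} = h_{c\sigma, \tau}^{q,p}$ and $h_{\sigma, \tau}^{p,q} = h_{\sigma, c\tau}^{q,p}$ already extracted from $\sF_\infty$ in the proof of that proposition (equations \eqref{eqn:hodge-number-complex-conj} and \eqref{eqn:hodge-number-complex-conj-2}).

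The key observation is that if $L$ admits a real embedding $\sigma_0$, then the maximal CM or totally real subfield $L_1 \subset L$ must be totally real: the restriction $\sigma_0|_{L_1}$ is a real embedding of $L_1$, but a CM field admits no real embeddings, so the ``CM or totally real'' alternative forces $L_1$ to be totally real. Consequently every embedding of $L_1$ into $\C$ is fixed by complex conjugation, and for an arbitrary $\sigma : L \to \C$ we have $(c\sigma)|_{L_1} = c(\sigma|_{L_1}) = \sigma|_{L_1}$.

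The corollary then follows immediately: by Prop.\,\ref{prop:strong-purity}, the equality $(c\sigma)|_{L_1} = \sigma|_{L_1}$ gives $h_{c\sigma, \tau}^{q,p} = h_{\sigma, \tau}^{q,p}$; combining with $h_{\sigma, \tau}^{p,q} = h_{c\sigma, \tau}^{q,p}$ yields the desired identity. If instead $E$ admits a real embedding, the parallel argument applied to $\tau$ — using $h_{\sigma, \tau}^{p,q} = h_{\sigma, c\tau}^{q,p}$ together with the analogous reduction to the maximal CM or totally real subfield $E_1$ of $E$, which is forced to be totally real — yields the same conclusion. There is no real obstacle: the entire content of the corollary is the observation that the ``CM or totally real'' dichotomy of Prop.\,\ref{prop:strong-purity} collapses to ``totally real'' as soon as a real embedding exists, at which point the $\sF_\infty$-symmetry $(p,q) \leftrightarrow (q,p)$ becomes unconditional.
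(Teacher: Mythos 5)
Your proof is correct and follows the same route as the paper: observe that a real embedding forces the maximal CM-or-totally-real subfield to be totally real, invoke Prop.~\ref{prop:strong-purity} to get $h_{c\sigma,\tau}^{q,p}=h_{\sigma,\tau}^{q,p}$ (resp.\ the analogue in $\tau$), and combine with \eqref{eqn:hodge-number-complex-conj} (resp.\ \eqref{eqn:hodge-number-complex-conj-2}). You have merely spelled out more explicitly than the paper why the dichotomy collapses to ``totally real.''
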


\begin{proof}
If $L$ (resp., $E$) has a real embedding, then its largest CM or totally real subfield of $L$ is necessarily totally real, so that 
$h_{\sigma, \tau}^{p,q} \ = \ h_{c\sigma, \tau}^{q,p}$ (resp., $h_{\sigma, \tau}^{p,q} \ = \ h_{\sigma, c\tau}^{q,p}$), and one 
applies \eqref{eqn:hodge-number-complex-conj} (resp., \eqref{eqn:hodge-number-complex-conj-2}). 
\end{proof}

\medskip
\begin{cor}
\label{cor:sigma-tau-indep-L-real-emb}
If $L$ has a real embedding $\sigma_0$, and suppose the weight is even, say $2p$, and the involution $\sF_\infty$ of $M_{\sigma_0}$ acts on 
$M_{\sigma_0}^{p,p}$ by either $+1$ or $-1$, then 
$$
\sum_{r<s} h_{\sigma, \tau}^{r,s} \ \ \ {\rm and} \ \ \ \sum_{r \leq s} h_{\sigma, \tau}^{r,s}
$$
are independent of $\sigma$ and $\tau.$

In the case of an even weight $2p$ (resp, odd weight $2p-1$), the conclusion can be rephrased as ${\rm Gr}^p(M_{\rm dR})$ and $\cF^{p+1}(M_{\rm dR})$ 
(resp., $\cF^p(M_{\rm dR})$) is a free $L \otimes E$-module. 
\end{cor}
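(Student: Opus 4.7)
The plan is to reduce to the single real embedding $\sigma_0$ via the transitive Galois action on $\Hom(L,\bQ)$, and then to use the $E$-rational decomposition $M_{\sigma_0}=M_{\sigma_0}^{+}\oplus M_{\sigma_0}^{-}$ afforded by $\sF_\infty$ to express the relevant sums of Hodge numbers in terms of invariants that are manifestly independent of $\tau$. Since $L/\Q$ is separable, $\Gal(\bQ/\Q)$ acts transitively on $\Hom(L,\bQ)$, so each $\sigma$ has the form $\gamma\sigma_0$ for some $\gamma\in\Gal(\bQ/\Q)$, and \eqref{eqn:hodge-number-gal} gives $h^{r,s}_{\sigma,\tau}=h^{r,s}_{\sigma_0,\gamma^{-1}\tau}$. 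Hence it suffices to show that $\sum_{r<s}h^{r,s}_{\sigma_0,\tau}$ and $\sum_{r\leq s}h^{r,s}_{\sigma_0,\tau}$ do not depend on $\tau$.

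Because $\sF_\infty$ is $E$-linear, $M_{\sigma_0}^{\pm}$ are $E$-vector spaces, and therefore $\dim_{\C}(M_{\sigma_0}^{\pm}\otimes_{E,\tau}\C)=\dim_{E}M_{\sigma_0}^{\pm}$ for every $\tau$, trivially $\tau$-independent. On the other hand, $\sF_\infty$ exchanges the complexified Hodge pieces $M_{\sigma_0}^{r,s}\leftrightarrow M_{\sigma_0}^{s,r}$, compatibly with the further $\tau$-refinement of the $(E\otimes\C)$-module structure; combined with the equality $h^{r,s}_{\sigma_0,\tau}=h^{s,r}_{\sigma_0,\tau}$ of Corollary \ref{cor:strong-purity} (available since $L$ has a real embedding), this shows that for $r<s$ the $\pm 1$-eigenspace of $\sF_\infty$ inside each $\tau$-component $M_{\sigma_0,\tau}^{r,s}\oplus M_{\sigma_0,\tau}^{s,r}$ has $\C$-dimension $h^{r,s}_{\sigma_0,\tau}$. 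On the middle piece $M_{\sigma_0}^{p,p}$, the hypothesis forces $\sF_\infty$ to act as a fixed sign $\epsilon\in\{+1,-1\}$. Summing over Hodge pieces then yields
$$\dim_{E}M_{\sigma_0}^{\epsilon}\,=\,\sum_{r<s}h^{r,s}_{\sigma_0,\tau}+h^{p,p}_{\sigma_0,\tau},\qquad\dim_{E}M_{\sigma_0}^{-\epsilon}\,=\,\sum_{r<s}h^{r,s}_{\sigma_0,\tau},$$
and since both left-hand sides are $\tau$-independent, so are $\sum_{r<s}h^{r,s}_{\sigma_0,\tau}$, $h^{p,p}_{\sigma_0,\tau}$, and therefore also $\sum_{r\leq s}h^{r,s}_{\sigma_0,\tau}$.

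For the rephrasing, note that $L\otimes_{\Q}E$ is a finite product of number fields, so a finitely generated module over it is free iff its $\C$-dimension after base change along every $\sigma\otimes\tau$ is constant. The ranks of $\cF^{p+1}(M_{\rm dR})$ and ${\rm Gr}^{p}(M_{\rm dR})$ at $(\sigma,\tau)$ are $\sum_{r>s}h^{r,s}_{\sigma,\tau}=\sum_{r<s}h^{r,s}_{\sigma,\tau}$ (by the symmetry) and $h^{p,p}_{\sigma,\tau}$, respectively, both now constant. The odd weight variant mentioned in the rephrasing is strictly simpler: with no middle Hodge piece, $\cF^{p}(M_{\rm dR})$ has rank $\sum_{r\geq p}h^{r,s}=\sum_{r<s}h^{r,s}$, and the same Galois-reduction plus $\pm$-eigenspace computation applies with $\epsilon$ vacuous.

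The only point requiring care, and hence the nearest thing to an obstacle, is matching the $E$-linear structure on the Betti side with the $(E\otimes\C)$-module refinement of the Hodge decomposition so that the $\tau$-indexing enters cleanly; this is bookkeeping rather than a genuine difficulty.
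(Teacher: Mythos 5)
Your proof is correct and follows essentially the same route as the paper's: the key point in both is that the $\pm 1$-eigenspaces of $\sF_\infty$ at the real embedding $\sigma_0$ are $E$-rational, hence have $\tau$-independent ranks equal to the two sums of Hodge numbers (up to the $M^{p,p}$ contribution), after which \eqref{eqn:hodge-number-gal} propagates the conclusion to all $(\sigma,\tau)$. The only cosmetic difference is that you count eigenspace dimensions directly in the Hodge decomposition, whereas the paper phrases the same count via the isomorphisms $M^\pm_{\sigma_0}\otimes\C \simeq (M_{\rm dR}/\cF^{p})\otimes_{L,\sigma_0}\C$ and $(M_{\rm dR}/\cF^{p+1})\otimes_{L,\sigma_0}\C$.
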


\begin{proof}
In the even weight $2p$ case, for the Hodge structure of $M_{\sigma_0}$, $M_{\sigma_0}^+$ (resp., $M_{\sigma_0}^-$), complexified, maps isomorphically to 
$(M_{\rm dR}/\cF^p) \otimes_{L, \sigma_0} \C$ (resp., $(M_{\rm dR}/\cF^{p+1}) \otimes_{L, \sigma_0} \C$). Since $M_{\sigma_0}^\pm$ are free $E \otimes \C$-modules, 
so are 
$(M_{\rm dR}/\cF^p) \otimes_{L, \sigma_0} \C$ and $(M_{\rm dR}/\cF^{p+1}) \otimes_{L, \sigma_0} \C$, meaning that 
$\sum_{r<p} h_{\sigma_0, \tau}^{r,s}$ and $h_{\sigma_0, \tau}^{p,p}$ are independent of $\tau$; one concludes by \eqref{eqn:hodge-number-gal}. 
The argument is similar in the odd weight case. 
\end{proof}

\medskip
\subsubsection{Highest weights}
Prop.\,\ref{prop:strong-purity} has an analogue on automorphic side: in \cite{raghuram-imaginary}, a condition called {\it strong-purity} is identified as a necessary condition on a dominant integral 
weight $\lambda$ to support cuspidal cohomology for $\GL(n)$ over a number field $L.$ Roughly speaking it says that such a highest weight $\lambda$ is the base-change from $L_1$ of a pure 
dominant integral weight for $\GL(n)/L_1$; see \cite[Prop.\,2.6]{raghuram-imaginary}; whereas the discussion in {\it loc.\,cit.}\ is for a totally imaginary base field $L$, 
it applies {\it mutatis mutandis} for
a general number field.

\medskip
\subsection{Systems of Betti--de Rham realizations over a totally imaginary field}
\label{sec:over-tot-imag}

Let $M$ be a Betti--de Rham system of realizations over a totally imaginary field $K$, with coefficients in a number field $E$. We assume vanishing middle
Hodge number. Our aim is to express $c^+_E(M)/c^-_E(M)$, an element of $(\C \otimes E)^*/E^*$, 
in terms of the invariant $\delta(K)$ of $K$ defined in \ref{sec:delta-K} below. This invariant is the image in 
$(\C \otimes E)^*/E^*$ of an element of order dividing $2$ of $\C^*/\Q^*$, by the injection:
$
\C^*/\Q^* \ = \  (\C \otimes \Q)^*/\Q^* \ \hookrightarrow \ (\C \otimes E)^*/E^*. 
$

\medskip
\subsubsection{}
\label{sec:delta-K}
Suppose first that $K$ is a CM field. Let $K^+$ be the totally real field of which it is a quadratic extension: $K = K^+(\sqrt{D})$ with 
$D$ totally negative. As $D \in K^+$ is well-defined up to multiplication by a square, the square root in $\C$ of $N_{K^+/\Q}(D)$ 
is well-defined modulo $\Q^*.$ Define $\delta(K)$ to be its image in $\C^*/\Q^*$, as well as in $(\C \otimes E)^*/E^*.$

\medskip

In general, let $K_1$ be the largest CM field or totally real subfield of $K.$ If $K_1$ is totally real then define $\delta(K) := 1.$ 
If $K_1$ is CM, then define 
$$
\delta(K) \ := \ \delta(K_1)^{[K:K_1]}.
$$
This formula holds more generally for $K_1$ any CM field contained in $K.$ 

\begin{thm}
\label{thm:c+/c-over_total_imag}
Let $M$ be a Betti--de Rham system of realizations over a totally imaginary field $K$, with coefficients in $E$. Assume that $M$ has no middle
Hodge number. If $M$ is of dimension $n$ over $E$ (i.e., $n = \dim_E(M_\sigma)$ for any embedding $\sigma : K \to \C$), 
then one has 
$$
c^+_E(M)/c^-_E(M) \ = \ \delta(K)^n.
$$
\end{thm}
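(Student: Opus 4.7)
Let $M' := \Res_{K/\Q}(M)$, so that $c^\pm_E(M) = c^\pm_E(M')$ by definition. The plan is to apply the machinery of Section \ref{sec:Multiplicative relations} to $M'$ with the multilinear structure $\fS$ consisting of the $E$-coefficient structure together with the $H^0(\Spec K)$-module structure that makes $M'$ the restriction of a system over $K$ (cf.\ \ref{sec:realizations_over_L_modules}). By that discussion, $G(\fS)_{\rm B}$ sits in an extension
$$
1 \longrightarrow \prod_{\sigma : K \hookrightarrow \C} \GL_E(M_\sigma) \longrightarrow G(\fS)_{\rm B} \longrightarrow \Gal(N/\Q) \longrightarrow 1,
$$
where $N$ is the normal closure of $K$ in $\bQ \subset \C$, and the Frobenius $\sF_\infty$ lies over the complex conjugation $c \in \Gal(N/\Q)$.

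The first step is to compute the ratio $\chi^+_E/\chi^-_E$ on $Z(\sF_\infty)$. Because $K$ is totally imaginary, $c$ has no fixed points on $\Hom(K,\C)$, so this set partitions into pairs $\{\sigma, c\sigma\}$; on each summand $M_\sigma \oplus M_{c\sigma}$ of $M'_{\rm B}$, the $\pm 1$-eigenspaces of $\sF_\infty$ are the diagonal and antidiagonal copies of $M_\sigma$, each an $E$-module of rank $n$. A direct calculation for $g \in Z(\sF_\infty)$ projecting to some $\gamma \in \Gal(N/\Q)$ that centralizes $c$ shows that on a $c$-orbit $\{\sigma,c\sigma\}$ the $E$-determinants of $g$ on the two $\pm$-eigenspaces coincide when $\gamma\sigma = \sigma$, while they differ by a factor of $(-1)^n$ when $\gamma\sigma = c\sigma$. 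Hence, setting $s(\gamma) := \#\{\text{pairs swapped by } \gamma\}$ and $\epsilon(\gamma) := (-1)^{s(\gamma)}$, one obtains $(\chi^+_E/\chi^-_E)(g) = \epsilon(\gamma)^n$. The vanishing middle Hodge hypothesis ensures that the target $M_{\rm dR}/\cF^n$ used to define $c^+_E$ and $c^-_E$ is the same, so the character $\varphi_m$ of $P_{\rm dR}$ attached to the monomial $m := c^+_E(M)/c^-_E(M)$ is trivial.

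The second step is to realize $\delta(K)^n$ as the value at $\fC \in T(\C)$ of a $\Q$-rational algebraic function on $T$ with the same equivariance: trivial under $P_{\rm dR}$ and transforming by $\epsilon^n$ under $Z(\sF_\infty)$. Using the definition $\delta(K) = \delta(K_1)^{[K:K_1]}$ I would reduce to the case $K = K_1$ CM over $K^+$, and identify $\delta(K)$ with a period of the $1$-dimensional Artin motive attached to the quadratic character $\eta_{K/K^+}$ of $\Gal(\bar K^+/K^+)$, verifying by a trace calculation that the associated character on the component group of $Z(\sF_\infty)$ is precisely $\epsilon$. Sanity check: for $K = \Q(i)$ and $n=1$, $\delta(K) = i$, and the $c^+/c^-$ of the Artin motive of the sign character of $\Gal(\Q(i)/\Q)$ (after an appropriate interpretation) equals $i$ as well.

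The third step is to conclude by Theorem \ref{thm:fancy}: since $m$ and $\delta(K)^n$ are values at $\fC$ of $\Q$-rational functions on $T$ with identical $(P_{\rm dR}, Z(\sF_\infty))$-equivariance, their ratio lies in $(E \otimes K')^*$ for the appropriate $K' \subset \bQ$. Both sides satisfy $(\cdot)^2 \in E^*$ (for $\delta(K)^n$ this is the definition; for $m$ it follows from $\epsilon^{2n}=1$), so this ambiguity collapses and yields the claimed equality in $(E \otimes \C)^*/E^*$. The main obstacle is Step 2---pinning down $\delta(K)^n$ as the period of an explicit multilinear construction from $(M_\sigma, \fS)$ with the correct component-group character---whereas Steps 1 and 3 are fairly mechanical consequences of the general formalism. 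An alternative to Step 2 would be a direct basis computation: choose an $E$-basis of each $M_\sigma$ compatible with $\sF_\infty$ and an $E \otimes K$-basis of $M_{\rm dR}$; the contributions of the comparison matrices $A_\sigma$ cancel from the ratio $c^+/c^-$, leaving a determinant depending only on the discriminant-style quantity $\delta(K)^n$.
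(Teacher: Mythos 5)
Your Steps 1 and 2 are in the right spirit, but the argument as assembled has a genuine gap at Step 3, precisely at the point where the theorem's content lies. Theorem \ref{thm:fancy} only determines a monomial up to $(E\otimes K_0)^*$ (or $(E\otimes K_1)^*$), where $K_0$ is the field of definition of the connected component of $T(\C)$ containing $\fC$; here $G(\fS)_{\rm B}$ is disconnected (its component group involves $\Gal(N/\Q)$), so $K_0$ need not be $\Q$, and in fact the quadratic field $\Q(\delta(K_1))$ is exactly the sort of field one expects $K_0$ to contain. So matching the characters of $Z(\sF_\infty)$ and $P_{\rm dR}$ only yields $c^+_E(M)/c^-_E(M)=\delta(K)^n\cdot u$ with $u\in(E\otimes K_0)^*$, and the general machinery cannot distinguish $\delta(K)^n$ from $1$. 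Your proposed collapse of this ambiguity --- ``both sides square into $E^*$'' --- does not work: an element of $K_0^*$ whose square lies in $\Q^*$ need not lie in $\Q^*$ (take $\sqrt{5}\in\Q(\sqrt5)^*$), which is exactly the ambiguity at stake. (Moreover, the claim that $m^2\in E^*$ itself only follows from Theorem \ref{thm:fancy} when $G(\fS)_{\rm B}$ is connected, which it is not here.) The missing ingredient is a Galois-equivariance argument pinning down the class in $(E\otimes\bQ)^*$ exactly, and your Step 2 --- which would have to supply it by exhibiting $\delta(K)^n$ as the value at $\fC$ of a $\Q$-rational function with the right equivariance --- is only sketched and flagged as the main obstacle.

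The paper's proof bypasses Theorem \ref{thm:fancy} entirely and is much closer to the ``alternative'' you mention in your last sentence. After identifying ${}_0M^{\pm}\otimes\C$ with $\C\otimes(\oplus_{\sigma\in\Phi}M_\sigma)$ for a CM type $\Phi$, the ratio $c^+_E/c^-_E$ becomes the determinant of an explicit automorphism in which the comparison isomorphisms cancel; decomposing $\C\otimes M_\sigma=\cF^+_\sigma\oplus\sF_\infty(\cF^+_{c\sigma})$ one finds this automorphism acts by $-1$ on the first summand and $+1$ on the second, so the ratio is literally the tuple of signs $\bigl(\prod_{\sigma\in\Phi}(-1)^{m(\sigma,\tau)}\bigr)_\tau$ with $m(\sigma,\tau)=\dim\cF^+_{\sigma,\tau}$. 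The identification with $\delta(K)^n$ then uses the $\Gal$-invariance of $m(\sigma,\tau)$ together with $m(\sigma,\tau)+m(c\sigma,\tau)=n$: both the tuple of signs and $\prod_{\sigma\in\Phi_1}\sigma(\sqrt D)^n$ transform under $g\in\Gal(A/\Q)$ by the same character $\beta(g)^n$, so their ratio is Galois-invariant, hence in $E^*$. This explicit cocycle comparison is exactly the descent from $(E\otimes K_0)^*$ to $E^*$ that your argument is missing; if you want to salvage your route, Step 2 must be carried out to the point of producing this identity, at which point Theorem \ref{thm:fancy} becomes unnecessary.
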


\begin{proof}
Let ${}_0M$ be deduced from $M$ by restriction of ground field from $K$ to $\Q$. By definition, 
$c_E^\pm(M) = c_E^\pm({}_0M).$ One has 
$$
{}_0M_{\rm B} \ = \ \bigoplus_{\sigma : K \to \C} M_\sigma, 
$$
and for $c$, the complex conjugation, the involution $\sF_\infty$ of ${}_0M_{\rm B}$ permutes $M_\sigma$ and $M_{c\sigma}.$
Let $\Phi$ be a CM type for $K$, i.e., it is a set of complex embeddings containing exactly one element of each pair $\{\sigma, c\sigma\}$. 
One has then:
\begin{equation}\label{eqn:0_M^+}
\begin{split}
{}_0M^+ & =  \ \bigoplus_{\sigma \in \Phi} \ (\mbox{graph of $\sF_\infty : M_\sigma \to M_{c\sigma}$}) \\
{}_0M^- & =  \ \bigoplus_{\sigma \in \Phi} \ (\mbox{graph of $-\sF_\infty : M_\sigma \to M_{c\sigma}$}), 
\end{split}
\end{equation}
and the projections to the $M_\sigma,$ for $\sigma \in \Phi$, induce an isomorphism of $E$-vector spaces:
\begin{equation}\label{eqn:0_M^+-}
{}_0M_{\rm B}^\pm \ = \ \bigoplus_{\sigma \in \Phi} M_\sigma. 
\end{equation}

\medskip

Let $p$ be the integer such that $M$ is of weight $2p-1$ or $2p-2$. The comparison isomorphism induces 
$\C \otimes E$-linear isomorphisms between the complexifications of the $E$-vector spaces ${}_0M^\pm$ and 
${}_0M_{\rm dR}/\cF^p{}_0M_{\rm dR},$ and the periods $c_E^\pm(M)$ are their determinants, computed in 
bases over $E$ of  ${}_0M^\pm$ and ${}_0M_{\rm dR}/\cF^p{}_0M_{\rm dR}.$

\medskip

The ratio $c_E^+(M)/c_E^-(M)$ is the image in $(\C \otimes E)^*/E^*$ of the determinant, 
in $(\C \otimes E)^*,$ of the automorphism of $\C \otimes (\oplus_{\sigma \in \Phi} M_\sigma),$ which
in the diagram below is the left vertical arrow, defined as the composite of the various isomorphisms or their inverses: 
\begin{equation}\label{eqn:certain_iso}
\xymatrix{
\C \otimes (\oplus_{\sigma \in \Phi} M_\sigma)  \ar[dd]  & 
\C \otimes {}_0M^+ = 
\C \otimes (\bigoplus_{\sigma \in \Phi} \ (\mbox{graph of $\sF_\infty : M_\sigma \to M_{c\sigma}$})) \ar@{>}[l] \ar@{>}[d] \\
 & \C \otimes ({}_0M_{\rm dR}/\cF^p{}_0M_{\rm dR}) \ar@{>}[d] \\
\C \otimes (\oplus_{\sigma \in \Phi} M_\sigma)  & 
\C \otimes {}_0M^- = 
\C \otimes (\bigoplus_{\sigma \in \Phi} \ (\mbox{graph of $-\sF_\infty : M_\sigma \to M_{c\sigma}$})) \ar@{>}[l] 
}
\end{equation}
As we now have an automorphism, the choices of bases are irrelevant; one should just use the same basis at the source 
and target. When we identify $\C \otimes E$ with a product of copies of $\C$ indexed by the complex embeddings 
$\tau : E \to \C$, the determinant of the isomorphism in the left vertical arrow in \eqref{eqn:certain_iso} becomes a 
family of elements of $\C$ indexed by $\tau : E \to \C.$

\medskip

Both ${}_0M_{\rm dR}$ and $\cF^p{}_0M_{\rm dR}$ are $K$-$E$-bimodules. The first is free as a $K \otimes E$-module. Indeed, 
if we choose $\sigma: K \to \C$, it becomes free after extension of scalars from $K \otimes E$ to $K \otimes \C$ by $\sigma$, 
becoming isomorphic to $\C \otimes M_\sigma$. The $K \otimes E$-module $\cF^p{}_0M_{\rm dR}$ is usually not free. 
Define $\cF^+ := \C \otimes \cF^p{}_0M_{\rm dR}.$ As a $\C$-algebra $\C \otimes K \otimes E$ is the product of copies 
of $\C$ indexed by pairs $(\sigma, \tau)$ of complex embeddings $\sigma$ of $K$ and $\tau$ of $E$; the module 
structure of $\cF^+$ amounts to a decomposition 
$$
\cF^+ \ = \ \bigoplus_{\sigma, \tau} \cF^+_{\sigma, \tau},
$$
where $\cF^+_{\sigma, \tau}$ is a finite-dimensional complex vector; we define 
$$
m(\sigma,\tau) \ := \ \dim_\C \cF^+_{\sigma, \tau}. 
$$
The set of integers $\{m(\sigma, \tau)\}_{\sigma,\tau}$ determine the isomorphism class of the $K$-$E$-bimodule $\cF^p{}_0M_{\rm dR}.$ 

\medskip

\begin{lem}
When we identify $\C \otimes E$ with a product of copies of $\C$ indexed by embeddings $\tau : E \to \C$, the determinant, in $\C \otimes E,$ of 
the automorphism of $\C \otimes (\oplus_{\sigma \in \Phi} M_\sigma),$ in \eqref{eqn:certain_iso}, is:
$$
\left(\prod_{\sigma \in \Phi} \ (-1)^{m(\sigma, \tau)}\right)_{\tau : E \to \C}.
$$
\end{lem}

\begin{proof}
We will identify $\cF^+$ with its image by the comparison isomorphism between $\C \otimes {}_0M_{\rm B}$ and $\C \otimes {}_0M_{\rm dR}.$
It is the sum of its intersections $\cF^+_\sigma$ with the summands $\C \otimes M_\sigma$, and 
$\sF_\infty(\cF^+_\sigma)$ is a supplement to $\cF^+_{c \sigma}$ in $\C \otimes M_{c \sigma}$. Indeed, $\cF^+_\sigma$ is the sum 
over $\tau$ of the $\cF_{\sigma, \tau}$, and $\sF_\infty(\cF^+)$ is the complex conjugation of $\cF^p(\C \otimes (\oplus M_\sigma)).$

\smallskip

The automorphism of $\C \otimes (\oplus_{\sigma \in \Phi} M_\sigma)$ in \eqref{eqn:certain_iso} is the product of the following automorphisms 
of the $\C \otimes M_\sigma$ ($\sigma \in \Phi$): 
\begin{equation}
\label{eqn:certain_iso_sigma}
\xymatrix{
\C \otimes M_\sigma  \ar[dd]  & 
\C \otimes (\mbox{graph of $\sF_\infty : M_\sigma \to M_{c\sigma}$}) \ar@{>}[l] \ar@{>}[d] \\
 & (\C \otimes M_\sigma)/\cF^+_\sigma + (\C \otimes M_{c\sigma})/\cF^+_{c\sigma} \ar@{>}[d] \\
\C \otimes M_\sigma  & 
\C \otimes (\mbox{graph of $-\sF_\infty : M_\sigma \to M_{c\sigma}$}) \ar@{>}[l] 
}
\end{equation}
We will conclude the proof of the lemma by showing that the determinant of the automorphism 
of $\C \otimes M_\sigma,$ the left vertical arrow in \eqref{eqn:certain_iso_sigma}, is $\{(-1)^{m(\sigma, \tau)}\}_{\tau : E \to \C}.$

\smallskip

Let us decompose $\C \otimes M_\sigma$ as the direct sum of $\cF_\sigma^+$ and $\sF_\infty(\cF^+_{c\sigma}).$ The automorphism of 
$\C \otimes M_\sigma$ in \eqref{eqn:certain_iso_sigma} is $-1$ on the first summand and $+1$ on the second. 
Indeed, as $x \in \cF^+_\sigma$ maps to $(x, \sF_\infty(x))$, equal modulo $\cF^+_\sigma$ to $(-x, \sF_\infty(x))$ in the graph of $-\sF_\infty$, 
projecting to $x$; and for $y \in \cF^+_{c\sigma}$, $\sF_\infty(y)$ maps to $\sF_\infty(y)+y$, congruent modulo $\cF^+_{c \sigma}$ to $\sF_\infty(y)-y$ projecting to $\sF_\infty(y)$. 
As $\cF^+_\sigma$ is the product of the $\cF_{\sigma, \tau}$, this gives the formula for the determinant of the automorphism 
in \eqref{eqn:certain_iso_sigma}. 
\end{proof}

\medskip
\subsubsection{}
Let $A \subset \C$ be a Galois extension of $\Q,$ large enough to contain the images of $K$ and $E$ by any complex embedding. One has 
$$
m(\sigma, \tau) \ = \ \dim_A(\cF^p {}_0M_{\rm dR} \, \otimes_{K \otimes E, \sigma \otimes \tau} \, A).
$$
It follows that the function $(\sigma, \tau) \mapsto m(\sigma,\tau)$ is $\Gal(A/\Q)$-invariant. We go through the steps in the proof of Prop.\,\ref{prop:strong-purity} one 
more time. The $\C \otimes E$-module $\C \otimes M_\sigma$ is free of rank $n$ and is the direct sum of $\cF^+_\sigma$ and 
$\sF_\infty \cF^+_{c\sigma}$. It follows that 
\begin{equation}
\label{eqn:n_sigma_n_c_sigma}
m(\sigma, \tau) + m(c\sigma, \tau) \ = \ n.
\end{equation}
For any $c' = g c g^{-1}$ conjugate in $\Gal(A/\Q)$ of the complex conjugation $c$, one has 
$$
m(\sigma, \tau) + m(c'\sigma, \tau) \ = \ n.
$$
Indeed, 
$$
m(c'\sigma, \tau) \ = \ m(g c g^{-1}\sigma, \tau) \ = \ m(c g^{-1}\sigma, g^{-1}\tau) \ = \ n - m(g^{-1}\sigma, g^{-1}\tau) \ = \ n - m(\sigma, \tau). 
$$
It follows that, as a function of $\sigma$, $m(\sigma,\tau)$ is invariant by the normal subgroup $H$ generated by $c' c''^{-1},$ for 
$c'$ and $c''$ any conjugates of $c$. The quotient of $\Gal(A/\Q)$ by $H$ is the largest quotient in which $c$ becomes central, 
and invariance of $m$ means that $m(\sigma,\tau)$ depends only, as a function of $\sigma$, on the restriction of 
$\sigma$ to the subfield $K_1$ of $K.$

\medskip

If $K_1$ is totally real, \eqref{eqn:n_sigma_n_c_sigma} shows that $m(\sigma,\tau) = n/2$, so that 
$\prod_{\sigma \in \Phi}(-1)^{m(\sigma,\tau)}$ is independent of $\tau$; it is either $+1$ or $-1$ 
for all $\tau,$ and is the image of $\pm 1 \in \E^*$ in $(\C \otimes E)^*$. As claimed in 
Thm.\,\ref{thm:c+/c-over_total_imag}, the image of $c^+_E(M)/c^-_E(M)$ in 
$(\C\otimes E)^*/E^*$ is trivial. 

\medskip

We now suppose that $K_1$ is a CM field. A subset $\Phi_1$ of the set of complex embeddings $\sigma_1$ of $K_1$ containing
exactly one element of each pair $\{\sigma_1,c\sigma_1\}$ defines a subset $\Phi$ of complex embeddings of $K$: the extensions
of elements of $\Phi_1$. Let $m_1(\sigma_1,\tau)$ be $m(\sigma,\tau)$ for any extension $\sigma$ of $\sigma_1.$ One has
$$
\prod_{\sigma \in \Phi} (-1)^{m(\sigma,\tau)} \ = \ 
\left(\prod_{\sigma_1 \in \Phi_1} (-1)^{m_1(\sigma_1,\tau)}\right)^{[K:K_1]}.
$$
To prove Thm.\,\ref{thm:c+/c-over_total_imag}, it remains to show that the image of
\begin{equation}\label{eqn:thm_reduce_K_1}
\left(\prod_{\sigma_1 \in \Phi_1} (-1)^{m_1(\sigma_1,\tau)}\right)_\tau \ \in \ (\C \otimes E)^*
\end{equation}
in $(\C \otimes E)^*/E^*$ coincides with $\delta(K_1)^n.$ This relies on the identity 
$m_1(\sigma_1,\tau) + m_1(c\sigma_1,\tau) = n,$ which implies that if we modify $\Phi_1$ to $\Phi_1'$, by replacing 
$\sigma_1 \in \Phi_1$ by $c\sigma_1$, the product \eqref{eqn:thm_reduce_K_1} gets multiplied, for each $\tau$, by $(-1)^n.$

\medskip

Let $K_1^+$ be the totally real field of which $K_1 = K_1^+(\sqrt{D})$ is a quadratic extension with $D$ a totally negative element of $K_1^+$, 
and let $d = N_{K^+_1/\Q}(D).$ We have a commutative square: 
$$
\xymatrix{
A \otimes E \ \ar@^{{(}->}[r] \ar[d]^{\simeq} & \ \C \otimes A \ar[d]^{\simeq} \\
\oplus_\tau A \ \ar@^{{(}->}[r] & \ \oplus_\tau \C.
}
$$ 
Let $d^{1/2}$ be a square root of $d$ in $A$; note that $d^{1/2} = \delta(K_1).$ 
We need to show that the ratio $(d^{1/2})^n\otimes 1 \in A \otimes E$
and \eqref{eqn:thm_reduce_K_1} in $A \otimes E$ is in $E^*,$ i.e., that the ratio is invariant by $\Gal(A/\Q).$ 

\medskip

Let $\beta$ be the following character of $\Gal(A/\Q)$: for $\Phi_1$ as before, and $g \in \Gal(A/\Q)$, 
define $\beta(g)$ to be $(-1)^k$, where $k$ is the number of 
$\sigma_1 \in \Phi_1$ which need to be replaced by $c \sigma_1$ to transform $\Phi_1$ to $g \Phi_1$.  
If we apply $g$ to the element in \eqref{eqn:thm_reduce_K_1}
$$
\tau \ \mapsto \ \prod_{\sigma_1 \in \Phi_1} (-1)^{m(\sigma_1,\tau)}
$$  
of $A \otimes E$, identified with a product of copies of $A$, one obtains
$$
g \tau \ \mapsto \ \prod_{\sigma_1 \in \Phi_1} (-1)^{m(\sigma_1,\tau)} 
\ = \ \prod_{\sigma_1 \in \Phi_1} (-1)^{m(g \sigma_1, g \tau)}, 
$$  
that is 
$$
\tau \ \mapsto \ \prod_{\sigma_1 \in g \Phi_1} (-1)^{m(\sigma_1,\tau)}. 
$$  
This is simply \eqref{eqn:thm_reduce_K_1} multiplied by $\beta(g)^n.$ On the other hand, a square root of $d$ in $A$ is obtained by taking 
$$
d^{1/2} \ = \ \prod_{\sigma \in \Phi_1} \sigma(\sqrt{D}). 
$$
For this square root we have
$
g(d^{1/2}) \ = \ \beta(g) d^{1/2}.$
Hence, the ratio of $(d^{1/2})^n\otimes 1$ and \eqref{eqn:thm_reduce_K_1} in $A \otimes E,$ being invariant by $\Gal(A/\Q),$ is in $E^*.$ 
This completes the proof of Thm.\,\ref{thm:c+/c-over_total_imag}. 
\end{proof}

The reader should compare the above theorem with some of the motivic calculations in Harris \cite{harris-crelle} and Hida \cite{hida-duke}. 

\medskip
\subsubsection{Rankin--Selberg $L$-functions over a totally imaginary field}
\label{sec:tot-imag-field-comparison}
The period relation in Thm.\,\ref{thm:c+/c-over_total_imag} has a direct bearing on the rationality results in \cite{raghuram-imaginary} for the 
Rankin--Selberg $L$-functions for $\GL(n) \times \GL(n')$ over a totally imaginary field $F$. Suppose $\sigma$ (resp., $\sigma'$) is a cohomological cuspidal automorphic 
representation of $\GL(n)$ (resp., $\GL(n')$) over $F$, and $E$ is a Galois extension of $\Q$ inside $\bQ$ which contains a copy of $F$ and the Hecke eigenvalues 
of $\sigma$ and $\sigma'$ then the main theorem of that article \cite[Thm.\,5.16]{raghuram-imaginary} states that when we have two successive critical points, say $m$ and $m+1$, then 
$$
|\delta_{F/\Q}|^{- \tfrac{n n'}{2}} \cdot \frac{L(m, \sigma \times \sigma')}{L(m+1, \sigma \times \sigma')} \in E, 
$$
where $L(s, \sigma \times \sigma')$ is the completed Rankin--Selberg $L$-function, and $\delta_{F/\Q}$ is the absolute discriminant of $F/\Q$. 
Furthermore, for all $\gamma \in \Gal(\bQ/\Q)$ we have the reciprocity law
$$
\gamma\left(|\delta_{F/\Q}|^{- \tfrac{n n'}{2}} \cdot \frac{L(m, \sigma \times \sigma')}{L(m+1, \sigma \times \sigma')}\right) \ = \ 
 \varepsilon(\gamma) \cdot \varepsilon'(\gamma) \cdot 
|\delta_{F/\Q}|^{- \tfrac{n n'}{2}} \cdot \frac{L(m, {}^\gamma\sigma \times {}^\gamma\sigma')}{L(m+1, {}^\gamma\sigma \times {}^\gamma\sigma')}.  
$$ 
where $ \varepsilon(\gamma), \varepsilon'(\gamma) \in \{\pm 1\}$ are certain signatures that appear due to Galois action on cohomology of the boundary of the Borel--Serre compactification of a 
locally symmetric space for $\GL(N)$, with $N = n+n'.$ If we take $M$ to be the tensor product over $E$ of the motives $M(\sigma)$ and $M(\sigma')$ 
that conjecturally correspond to $\sigma$ and $\sigma'$, then Thm.\,\ref{thm:c+/c-over_total_imag} applied to this $M$ is compatible with the above mentioned 
\cite[Thm.\,5.16]{raghuram-imaginary} after the following equality of signatures 
$$
\frac{ \gamma(|\delta_{F/\Q}|^{n n'/2})} {|\delta_{F/\Q}|^{n n'/2}} \cdot 
\varepsilon(\gamma) \cdot \varepsilon'(\gamma) \ = \ 
\frac{ \gamma((i^{[F:\Q]/2} \, \delta(F))^{nn'})}{(i^{[F:\Q]/2} \, \delta(F))^{nn'}} 
$$
that is proved in \cite[Prop.\,5.26]{raghuram-imaginary}. It is interesting to reflect on the idea that the two sides of the above equality of signatures have very different origins: 
the signature on the left hand side comes from 
$|\delta_{F/\Q}|^{n n'/2}$ (that is used to normalize a measure--\cite[5.2.3.5]{raghuram-imaginary}) and $\varepsilon(\gamma), \ \varepsilon'(\gamma)$ which arise from Galois action in 
boundary-cohomology, or  what amounts to the same, on Eisenstein-cohomology; the signature on the right hand side is intrinsic to the base field as in the proof of 
Thm.\,\ref{thm:c+/c-over_total_imag}  above.

\medskip
\subsection{Tensor product motives over a number field.}
\label{sec:tensor-product-motives-general}

In this section, we will extend the results of Sect.\,\ref{sec:tensor-product-motives} to the case of systems of realizations over a totally real number field, at least 
in the case of vanishing middle Hodge numbers. 

Let $M'$ and $M''$ be Betti--de Rham systems of realizations over a number field $L$ (\ref{sec:realizations_over_number_field_L}). A tensor product 
is defined in the obvious way. It does not commute with restriction from $L$ to $\Q$. To interpret $\Res_{L/\Q}(M' \otimes M'')$, in terms of 
which $c^+$ and $c^-$ of $M' \otimes M''$ are defined, one should view $\Res_{L/\Q}(M')$ and $\Res_{L/\Q}(M'')$ as modules over $H^0(\Spec \, L)$ 
(\ref{sec:realizations_over_L_modules}). As $H^0(\Spec \, L)$-modules, 
\begin{equation}
\label{eqn:tensor_res_over _L}
\Res_{L/\Q}(M' \otimes M'') \ = \ \Res_{L/\Q}(M') \otimes_{H^0(\Spec \, L)} \Res_{L/\Q}(M''). 
\end{equation}
The same applies for systems of realizations with coefficients in $E$, and tensor product of $M'$ and $M''$ over $E$; in \eqref{eqn:tensor_res_over _L}, 
$H^0(\Spec \, L)$ should be replaced by $H^0(\Spec \, L) \otimes E.$

\begin{prop}
\label{prop:tensor_L_E}
Let $M'$ and $M''$ be systems of Betti--de Rham realizations over a totally real number field $L$, with coefficients in $E$. If 
$M'$, $M'',$ and $M' \otimes_E M''$ have vanishing middle Hodge numbers, then as elements of $(\C \otimes E)^*/E^*$ we have
$$
c^+(M' \otimes_E M'') \ = \ c^-(M' \otimes_E M''). 
$$
\end{prop}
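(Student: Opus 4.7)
The plan is to reduce to an application of Theorem \ref{thm:criterion-M-V}, mimicking the proof of Proposition \ref{prop:tensor-product-even-even} but carrying the $L$-module structure along as part of the multilinear algebra data.

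First, I would set $N' := \Res_{L/\Q}(M')$ and $N'' := \Res_{L/\Q}(M'')$, viewed as Betti--de Rham systems over $\Q$ with coefficients in $E$ and a module structure over the Artin motive $H^0(\Spec\,L)$ (Sections \ref{sec:artin}--\ref{sec:over-L}). I would take as multilinear structure $\fS$ on $(N', N'')$ these two module structures together; by \eqref{eqn:tensor_res_over _L}, the system
$$
M \ := \ \Res_{L/\Q}(M' \otimes_E M''),
$$
whose periods we are comparing, is $M^V$ for the representation
$$
V \ := \ N'_B \otimes_{H^0(\Spec\,L)_B \otimes E} N''_B
$$
of $G(\fS)_B$ arising from the formalism of Section \ref{sec:mult-lin-alg}.

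The crucial input is that $L$ is totally real. Then $H^0(\Spec\,L)_B$ decomposes $N'_B = \bigoplus_\sigma M'_\sigma$ and $N''_B = \bigoplus_\sigma M''_\sigma$ over the real embeddings $\sigma$ of $L$, with $\sF_\infty$ preserving each summand. Hence
$$
G(\fS)_B \ = \ \prod_\sigma \GL_E(M'_\sigma) \times \prod_\sigma \GL_E(M''_\sigma),
$$
a connected $\Q$-group, and $Z(\sF_\infty)$ is the corresponding product of the $\GL_E(M'^{\pm}_\sigma)$ and $\GL_E(M''^{\pm}_\sigma)$, again connected.

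Next I would compute the characters $\chi^\pm_E$ of $Z(\sF_\infty)$ on $M^\pm = \bigoplus_\sigma (M'_\sigma \otimes_E M''_\sigma)^\pm$ summand by summand, exactly as in Proposition \ref{prop:tensor-product-even-even}. Writing $n'_\sigma(\pm) := \dim_E M'^{\pm}_\sigma$ and $n''_\sigma(\pm) := \dim_E M''^{\pm}_\sigma$, the vanishing middle Hodge number hypothesis on $M'$ and $M''$ forces $n'_\sigma(+) = n'_\sigma(-)$ and $n''_\sigma(+) = n''_\sigma(-)$ for every $\sigma$, so $\chi^+_E = \chi^-_E$. The vanishing middle Hodge number hypothesis on $M' \otimes_E M''$ ensures that $c^+(M)$ and $c^-(M)$ are computed using the same quotient $M_{\rm dR}/\cF^n$, so the associated characters of $P_{\rm dR}$ agree and their ratio is trivial. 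Theorem \ref{thm:criterion-M-V}(ii), applied to $M^V$, then yields $c^+(M)/c^-(M) \in E^*$.

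The main subtlety---and the reason the totally real hypothesis appears---is ensuring that $G(\fS)_B$ and $Z(\sF_\infty)$ are connected. If $L$ had a complex place, $\sF_\infty$ would swap Betti summands $M'_\sigma \leftrightarrow M'_{c\sigma}$ (and similarly for $M''$), and $Z(\sF_\infty)$ would acquire extra connected components; correspondingly, one would pick up a nontrivial contribution along the lines of $\delta(K)$ in Theorem \ref{thm:c+/c-over_total_imag}. A secondary technical check is that the $H^0(\Spec\,L)$-module structure is genuinely encodable by tensors $\Q(0) \to (N')^{\sf v} \otimes N'$, fitting into the formalism of Section \ref{sec:mult-lin-alg} exactly as the $E$-coefficient structure does.
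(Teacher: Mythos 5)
There is a genuine gap: your identification of $G(\fS)_B$ with the connected group $\prod_\sigma \GL_E(M'_\sigma) \times \prod_\sigma \GL_E(M''_\sigma)$ is false, and the whole difficulty of the proposition lives in the part of the group you have discarded. The decomposition $N'_{\rm B} = \bigoplus_\sigma M'_\sigma$ exists only on the Betti side; the de Rham realization of $\Res_{L/\Q}(M')$ is $M'_{\rm dR}$ viewed as a $\Q$-vector space, and it has no $\Q$-rational decomposition indexed by the embeddings $\sigma$ (the idempotents of $L\otimes\bQ$ cutting it out are not rational). For the same reason your deferred ``secondary technical check'' fails: unlike the $E$-action, multiplication by the idempotents $e_\sigma$ of $H^0(\Spec\,L)_{\rm B}=\Q^{\Sigma_L}$ is \emph{not} a morphism of systems of realizations, so the $H^0(\Spec\,L)$-module structure cannot be encoded by tensors $\Q(0)\to N'^{\sf v}\otimes N'$. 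One must instead carry $H^0(\Spec\,L)$ along as one of the objects $M_\alpha$ with its ring structure, and then $G(\fS)_{\rm B}$ is an extension of $S_{\Sigma_L}$ (or at best of $\Gal(N/\Q)$, by Remark \ref{rem:artin-motives}) by the product of the $\GL_E(M'_\sigma)$ and $\GL_E(M''_\sigma)$ --- disconnected whenever $L\neq\Q$. This is forced: the Tannakian fundamental group of the subcategory generated by $H^0(\Spec\,L)$ already surjects onto $\Gal(N/\Q)$.

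Consequently Theorem \ref{thm:criterion-M-V}(ii) in the ``$G(\fS)_{\rm B}$ connected'' form is not available, and you must do two additional things that your argument omits. First, show that $Z(\sF_\infty)$ surjects onto the component group $S_{\Sigma_L}$ (this is where totally real is used: $\sF_\infty$ preserves each $M_\sigma$ and $\dim M'^+_\sigma=\dim M'^-_\sigma$ for all $\sigma$, so permutations of the embeddings can be centralized by $\sF_\infty$); this is what makes $K_1=\Q$ in the theorem. Second, and essentially, verify $\chi^+_E=\chi^-_E$ not only on $Z(\sF_\infty)^0$ --- where your computation is the correct one, identical to Prop.\,\ref{prop:tensor-product-even-even} --- but also on representatives of the nonidentity components. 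The paper does this by choosing bases $e'^\pm_{i,\sigma}$, $e''^\pm_{j,\sigma}$ and lifting $S_{\Sigma_L}$ so that it permutes them; the action on $\Res_{L/\Q}(M'\otimes M'')^{\pm}$ is then a direct sum of $\dim(M'\otimes M'')/2$ copies of the same permutation representation for $+$ and for $-$, hence has the same determinant. Without this step the proposition is not proved. (Your remark about complex places is also slightly off-target: disconnectedness of $G(\fS)_{\rm B}$ occurs for every $L\neq\Q$, totally real or not; what a complex place would break is the surjection $Z(\sF_\infty)\to S_{\Sigma_L}$ and the component-group character computation, which is where a $\delta(K)$-type contribution would enter.)
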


\begin{proof}
We will consider $\Res_{L/\Q}(M' \otimes M'')$ as derived by the multilinear algebra construction \eqref{eqn:tensor_res_over _L} 
from $\Res_{L/\Q}(M')$, $\Res_{L/\Q}(M'')$, and $H^0(\Spec \, L)$. The structures used are the ring structure of $H^0(\Spec \, L),$ and 
the $H^0(\Spec \, L)-$module and $E-$module structures of $\Res_{L/\Q}(M')$ and $\Res_{L/\Q}(M'')$. 

The Betti realization of $H^0(\Spec \, L)$, with its ring structure $\fS_L$, is the sum of copies of $\Q$ indexed by the set $\Sigma_L := \Hom(L, \C)$ of the 
complex embeddings of $L$. The corresponding group $G(\fS_L)_B$ is the symmetric group $S_{\Sigma_L}$ of permutations of $\Sigma_L.$ 
Using more multilinear algebra structures on $H^0(\Spec \, L)$, one could reduce it to the Galois group of the normal closure 
of $L$ in $\C$, but this will not be needed. 

For $\Res_{L/\Q}(M')$ with its $H^0(\Spec \, L)-$module and $E-$module structures $\fS'$, the Betti realization is  
$\bigoplus_{\sigma \in \Sigma_L} M'_\sigma$, and the group $G(\fS')_B$ is an extension of $S_{\Sigma_L}$ by the product of the $\GL_E(M'_\sigma).$ The real Frobenius $\sF_\infty$ is in this product group. 
One has 
$$
\Res_{L/\Q}(M')^\pm \ = \ \bigoplus_{\sigma \in \Sigma_L} M'_\sigma{}^\pm, 
$$
and for each $\sigma$, $\dim(M'_\sigma{}^\pm) = \dim(M'_\sigma)/2.$ It follows that the centralizer $Z(\sF_\infty)$ maps onto $S_{\Sigma_L}$. 

For $\Res_{L/\Q}(M'),$  $\Res_{L/\Q}(M'')$ and $H^0(\Spec \, L)$, and the ring and module structures $\fS$, one has similarly, 
$$
G(\fS)_B \ = \ 
S_{\Sigma_L} \ltimes \left(\prod_\sigma \GL_E(M'_\sigma) \times \prod_\sigma \GL_E(M''_\sigma) \right).
$$

We need to prove that the determinants of the actions of $Z(\sF_\infty)$ on the $+$ and $-$ parts of $\Res_{L/\Q}(M' \otimes M'')$ are the same. 
For the connected component 
$$
\prod_{\sigma \in \Sigma_L} 
\left(
\GL_E(M'_\sigma{}^+) \times 
\GL_E(M'_\sigma{}^-) \times
\GL_E(M''_\sigma{}^+) \times
\GL_E(M''_\sigma{}^-) 
\right)
$$
the proof is the same as in Sect.\,\ref{sec:tensor-product-motives}. Let us pick bases $e'^+_{i, \sigma}$, $e'^-_{i, \sigma}$, $e''^+_{j, \sigma}$, and $e''^-_{j, \sigma}$, 
where $1 \leq i \leq \dim(M')/2$ and $1 \leq j \leq \dim(M')/2$, of the spaces $M'_\sigma{}^{\pm}$ and $M''_\sigma{}^{\pm}$. We can lift 
$S_{\Sigma_L}$ in $G(\fS)_B$ so that for each decoration $'$, $''$, ${}^+$, ${}^-$, $i$ or $j$, it permutes the $e_\sigma.$ For this choice, the action of 
$S_{\Sigma_L}$ on $\Res_{L/\Q}(M' \otimes M'')^\pm$ is the sum of $\dim(M' \otimes M'')/2$ permutation representation: the same for $+$ and $-$, hence the 
same determinant. 
\end{proof}

\medskip
\subsection{Motives via multiplication induction: Asai motives.}

\subsubsection{Definition}
Consider an extension $K/F$ of number fields, and a Betti--de Rham realization $M$ over $K$ of rank $n$ and weight $w$. 
There is a notion of {\it multiplicative induction from $K$ to $F$ of $M$}: 
$$
M \ \mapsto \ \bigotimes_{K/F} M, 
$$
which is a Betti--de Rham realization over $F$ of rank $n^{[K:F]}$ and weight $[K:F] w$. To define $\otimes_{K/F} M$ over $F$, it suffices to define it locally 
for the etale topology over $\Spec(F).$ This means defining what should be its base-change to $\Spec(F')$, for $F'$ a large enough finite extension of $F$, defining a 
compatibility isomorphism for an iterated extension $F''/F'/F$, and a compatibility between them for $F'''/F''/F'/F$. Here, {\it large enough} means that it contains a copy of the 
normal closure of $K/F$. For $F'$ large enough, there are $[K:F]$ embeddings over $F$
of $K$ in $F'$, and the base-change to $F'$ of $\bigotimes_{K/F} M$ is the tensor product of the corresponding base-changes of $M$. 
More generally, one can take for $K$ an etale algebra over $F$, that is a finite product of finite extensions $K_i$ of $F$. A Betti--de Rham realization $M$ over $K$ is a family 
$M_i$ of realizations over $K_i$. The definition above continues to make sense, and 
$$
\bigotimes_{K/F} M \ = \ \bigotimes_i \bigotimes_{K_i/F} M_i
$$
Statements true for tensor products can often be generalized to multiplicative induction. 

\begin{thm}
\label{thm:asai-motives}
Let $K/F$ be an extension of totally real number fields of degree $[K:F] > 1.$  Let $M$ be a Betti--de Rham realization over $K$ of rank $n$ with coefficients in a number field $E$. 
Assume that $M$ and $\otimes_{K/F} M$ have vanishing middle Hodge numbers. Necessarily, $n$ is even. 
Then as elements of $(E \otimes \C)^*/E^*$ one has
$$
c^+(\Res_{F/\Q}(\otimes_{K/F} M)) = c^-(\Res_{F/\Q}(\otimes_{K/F} M))
$$
unless $[K:F] = 2$ and $n/2$ is odd, in which case if $K = F(\sqrt{D})$ then 
$$
c^+(\Res_{F/\Q}(\otimes_{K/F} M))/c^-(\Res_{F/\Q}(\otimes_{K/F} M)) \ = \ \sqrt{N_{F/\Q}(D)},
$$
where $\sqrt{N_{F/\Q}(D)}$ is viewed as an element of $\C^*/\Q^* \hookrightarrow (\C \otimes E)^*/E^*.$
\end{thm}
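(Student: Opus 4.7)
The plan is to realize $\Res_{F/\Q}(\bigotimes_{K/F}M)$ as a multilinear algebra construction (Sect.~\ref{sec:mult-lin-alg}) from the family $(M_\sigma)_{\sigma \in \Sigma_K}$, obtained by decomposing $\Res_{K/\Q}(M)$ along the $H^0(\Spec K)$-module structure, equipped with the structure $\fS$ consisting of the $E$-action, the $H^0(\Spec K)$-module structure, and the subalgebra $H^0(\Spec F) \hookrightarrow H^0(\Spec K)$. Then $G(\fS)_{\rm B}$ is the semidirect product $\prod_{\sigma \in \Sigma_K} \GL_E(M_\sigma) \rtimes \Aut(\Sigma_K/\Sigma_F)$, where $\Aut(\Sigma_K/\Sigma_F)$ denotes the group of bijections of $\Sigma_K$ compatible with the projection $\Sigma_K \to \Sigma_F$. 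The Betti realization of $\Res_{F/\Q}(\bigotimes_{K/F}M)$ is the representation $V = \bigoplus_{\tau \in \Sigma_F}\bigotimes_{\sigma \mapsto \tau} M_\sigma$ of $G(\fS)_{\rm B}$. By Thm.~\ref{thm:fancy}, the theorem reduces to computing the characters $\chi^\pm$ of $Z(\sF_\infty)$ on $V^\pm$; write $d := n/2 = \dim_E M_\sigma^\pm$ (equal by vanishing middle Hodge) and $m := [K:F]$.

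On the identity component $Z(\sF_\infty)^0 = \prod_\sigma(\GL_E(M_\sigma^+) \times \GL_E(M_\sigma^-))$, the decomposition of each $V_\tau$ along signs $\varepsilon : \{\sigma \mapsto \tau\} \to \{\pm\}$ shows that each $\det \GL_E(M_\sigma^\pm)$ occurs with identical multiplicity $2^{m-2}d^{m-1}$ in $\chi^+$ and $\chi^-$, so $\chi^+ = \chi^-$ on $Z(\sF_\infty)^0$ and $c^+/c^-$ is algebraic. To handle the full $Z(\sF_\infty)$, compute $\chi^+/\chi^-$ on generators of $\Aut(\Sigma_K/\Sigma_F)$. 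For a fibre transposition $g$ exchanging $\sigma_1, \sigma_2 \mapsto \tau_0$, the summands with $\varepsilon_1 = \varepsilon_2$ see the flip on $M^\pm \otimes M^\pm$ of determinant $(-1)^{d(d-1)/2}$, yielding determinant $1$ after pairing; the summands with $\varepsilon_1 \neq \varepsilon_2$ are exchanged in pairs, each pair contributing $(-1)^{d^m}$. One finds $\chi^+(g)/\chi^-(g) = 1$ for $m \geq 3$, and $(-1)^d$ for $m = 2$. Transpositions of $\Sigma_F$ permute even-dimensional blocks $V_\tau^\pm$ of dimension $n^m/2$, contributing trivially. Hence $\chi^+ = \chi^-$ on $Z(\sF_\infty)$ unless $m = 2$ and $d$ is odd, and Thm.~\ref{thm:fancy} gives $c^+/c^- \in E^*$ in the non-exceptional cases.

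In the exceptional case $m = 2$, $d$ odd, the character $\eta := \chi^+/\chi^-$ of $G(\fS)_{\rm B}$ is the fibrewise sign $(\varepsilon, \pi) \mapsto \prod_\tau \mathrm{sgn}(\varepsilon_\tau)$. By Thm.~\ref{thm:fancy}, $c^+/c^-$ is determined modulo $E^*$ by $\eta$, and equals the period $c^+(N)/c^-(N)$ of the rank-one Betti--de~Rham realization $N$ over $\Q$ corresponding to $\eta$ via the tannakian dictionary. Writing $K = F(\sqrt{D})$ and setting $\omega := \prod_\tau \sigma_\tau(\sqrt{D})$ for chosen extensions $\sigma_\tau \in \Sigma_K$ of the embeddings $\tau \in \Sigma_F$, one has $\omega^2 = N_{F/\Q}(D)$, so $\omega$ is a specific square root of $N_{F/\Q}(D)$ in $\bQ$; a Galois cocycle computation parallel to the proof of Thm.~\ref{thm:c+/c-over_total_imag} shows that the Galois character of $\omega$ coincides with $\eta$, identifying $N$ with the quadratic Artin motive attached to $\Q(\sqrt{N_{F/\Q}(D)})$ and yielding $c^+/c^- = \sqrt{N_{F/\Q}(D)}$ modulo $E^*$. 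The hardest step will be this final Galois-cocycle identification: one must carefully match the abstract permutation action of $\Aut(\Sigma_K/\Sigma_F)$ with the actual Galois action on the torsor $T$ of comparison isomorphisms, in order to transport the combinatorial sign character to the motivic period.
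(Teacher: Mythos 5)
Your proposal is correct and follows essentially the same route as the paper: the same wreath-product group $G(\fS)_{\rm B}$ built from $H^0(\Spec F)\to H^0(\Spec K)$ and the module structures, the same determinant bookkeeping on $Z(\sF_\infty)^0$, on fibre transpositions (where the $(-1)^{d}$ discrepancy appears only for $[K:F]=2$), and on lifts of $S_{\Sigma_F}$, and the same identification of the residual fibrewise sign character with the rank-one Artin motive of $\Q(\sqrt{N_{F/\Q}(D)})$, whose period is $\sqrt{N_{F/\Q}(D)}$. The final Galois-theoretic identification you flag as the hardest step is exactly what the paper does (via the character $\beta$-type argument from Thm.\,\ref{thm:c+/c-over_total_imag}), so no new idea is needed there.
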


\begin{proof}

The method of proof is the same as in Sect.\,\ref{sec:tensor-product-motives-general}, starting this time with $H^0(\Spec(F))$ and $H^0(\Spec(K)),$ their ring structures, and the 
morphism 
$$
H^0(\Spec(F)) \ \to \ H^0(\Spec(K)), 
$$
$M$ and its $H^0(\Spec(K))-$module and $E$-module structures.

Let $\Sigma_K = \Hom(K,\C) = \Hom(K,\R)$ be the set of all complex (real) embeddings of $K$, and similarly $\Sigma_F$. Restriction to $F$ gives the surjective map 
$$
\xymatrix{
J := \Sigma_K \ar[d]^{\pi} \\
I := \Sigma_F. 
}
$$
If we consider just $H^0(\Spec(F))$ and $H^0(\Spec(K)),$ their ring structures, and the 
morphism $H^0(\Spec(F)) \to H^0(\Spec(K)),$ one has $G(\fS)_B = \Aut(J \to I)$, the group of all permutations of $J$ compatible with $\pi : J \to I.$
All the fibres of $\pi$ have cardinality $[K:F]$, and $\Aut(J \to I)$ is a wreath product: an extension
$$
1 \ \longrightarrow \ 
\prod_{i \in I} S_{\pi^{-1}(i)}  \ \longrightarrow \ 
\Aut(J \to I) \ \longrightarrow \ 
S_I \ \longrightarrow \ 
1.
$$
If we consider, in addition, $M$ and its module structures, one has for the Betti realizations: 
$$
\Res_{K/\Q}(M)_{\rm B} \ = \ \bigoplus_{j \in J} M_j, \quad {\rm and} \quad 
\Res_{F/\Q}(\otimes_{K/F} M)_{\rm B} \ = \ \bigoplus_{i \in I} \bigotimes_{\pi(j) = i} M_j.
$$
and $G(\fS)_B$ is an extension
$$
1 \ \longrightarrow \ 
\prod_{j \in J} \GL_E(M_j) \ \longrightarrow \ 
G(\fS)_B \ \longrightarrow \ 
\Aut(J \to I) \ \longrightarrow \ 
1.
$$
This is as in Sect.\,\ref{sec:tensor-product-motives-general}, except that we have used an additional structure on $H^0(\Spec(K))$ to reduce $S_J$ to $\Aut(J \to I).$ 
Here $\sF_\infty$ is the family of involutions in $\prod_{j \in J} \GL(M_j)$, defining decompositions 
$M_j = M_j^+ \oplus M_j^-$. Since $M$ has vanishing middle Hodge number, one has $\dim(M_j^+) = \dim(M_j^-) = \dim(M)/2 = m$ (say). The equality of the dimensions 
of all the $M_j^+$, $j \in J$, ensures that $Z(\sF_\infty)$ maps onto $\Aut(J \to I)$.

\medskip

We need to compare the characters of $Z(\sF_\infty)$ acting on 
$$
\Res_{F/\Q}(\otimes_{K/F} M)^+ \ = \ 
\ \bigoplus_{i \in I} \bigotimes_{\substack{\varepsilon : \pi^{-1}(i) \to \{0,1\} \\ \sum_{j \in \pi^{-1}(i) } \varepsilon(j) = 0}} M_j^{\varepsilon(j)}, 
$$
$$
\Res_{F/\Q}(\otimes_{K/F} M)^- \ = \ 
\ \bigoplus_{i \in I} \bigotimes_{\substack{\varepsilon : \pi^{-1}(i) \to \{0,1\} \\ \sum_{j \in \pi^{-1}(i) } \varepsilon(j) = 1}} M_j^{\varepsilon(j)}, 
$$
where, $M_j^0 = M_j^+$ and $M_j^1 = M_j^-.$

\medskip


For the action of the connected component $Z(\sF_\infty) = \prod \GL(M_j^+) \times \GL(M_j^-)$, only the $\pi(j)$-summand matters for the 
determinant of the action of $\GL(M_j^+)$ (resp., $\GL(M_j^-)$). The action is on 
$$
M_j^+ \otimes ({\rm something})
$$
with $\dim({\rm something})$ being the same for $+$ or $-$. The same for $\GL(M_j^-)$. It remains to see what happens on the representatives of the connected components. 

\medskip
Fix bases $\{e_{j,1}^\pm, \dots, e_{j,m}^\pm\}$ for $M_j^\pm.$ As a representative for $\sigma$, we take $\{e_{j,1}^\pm, \dots, e_{j,m}^\pm\} \to \{e_{\sigma(j),1}^\pm, \dots, e_{\sigma(j),m}^\pm\}$. 
We need to compare the actions of $\det(\sigma)$ on $\Res_{F/\Q}(\otimes_{K/F} M)^\pm.$

\medskip
Action of $S_{\pi^{-1}(i)}$, the permutation group of the fibre above $i$: 
It suffices to compare the actions on $\otimes_{\pi(j) = i} M_j^{\varepsilon(j)}.$ For a transposition of two $j$'s; call them $1$ and $2$; if $[K:F] > 2$, we have---for some 
$W^\pm$ of the same dimension---to consider 
$$
W^+ \otimes \left((M_1^+ \otimes M_2^+) \oplus (M_1^- \otimes M_2^-) \right) \ \oplus \ 
W^- \otimes \left((M_1^+ \otimes M_2^-) \oplus (M_1^- \otimes M_2^+) \right), 
$$
and 
$$
W^- \otimes \left((M_1^+ \otimes M_2^+) \oplus (M_1^- \otimes M_2^-) \right) \ \oplus \ 
W^+ \otimes \left((M_1^+ \otimes M_2^-) \oplus (M_1^- \otimes M_2^+) \right). 
$$
The determinant is the same since $\dim(W^+) = \dim(W^-).$ Only the case $[K:F] = 2$ remains to be considered. Here, we have to compare 
$$
(M_1^+ \otimes M_2^+) \oplus (M_1^- \otimes M_2^-) \quad {\rm and} \quad (M_1^+ \otimes M_2^-) \oplus (M_1^- \otimes M_2^+).
$$
For the latter, we have an involution with no fixed points on the basis vectors, the determinant of the involution is $(-1)^{m^2};$  
on the former, we have $\dim(M)$ many fixed-points: 
$e_{1,a}^+ \otimes e_{2,a}^+$ and $e_{1,a}^- \otimes e_{2,a}^-$, hence the determinant of the involution is $(-1)^{m^2-m}(-1)^{m^2-m} = 1;$ whence, the ratio 
of the two determinants is $(-1)^{m^2} = (-1)^m = (-1)^{\dim(M)/2}.$  For $\dim(M)/2$ even, we get the same trivial determinant on $\prod_{i \in I} S_{\pi^{-1}(i)}$, and 
if $\dim(M)/2$ is odd then we get $\prod_{i \in I} {\rm sgn}.$

\medskip
Let us choose an isomorphism between $J \to I$ and $(J_0 \times I) \to I$ so that 
$$
\Aut(J \to I) \ = \ S_I \times \prod_{i \in I} S_{J_0}.
$$
Except in the case of $[K:F] = 2$, and $\dim(M)/2$ is odd, we have an equality of characters on $\prod_{i \in I} S_{J_0}.$ We claim that in this case we have an equality of characters 
on all of $\Aut(J \to I)$ (and hence $c^+ \approx c^-$). Indeed, the $(\otimes_{\pi(j) = i} M_i)^+$ for all $i$ have been identified, and this gives the same action. Same for $-$, and we have 
the same dimension, hence the same character. 

\medskip
The same applies to the case $[K:F] = 2$ and $\dim(M)/2$ odd, in which case the ratio of the two characters is 
$$
\begin{cases} \mbox{$1$ on $S_I$,}\\ \mbox{$\prod_{i \in I} {\rm sgn}$ on $\prod_{i \in I} S_{\pi^{-1}(i)}$} \end{cases} 
$$
This character does not depend on how the $\pi^{-1}(i)$ have all been identified with a fixed $J_0$; two such identifications are conjugate by an element of 
$\prod_{i \in I} S_{\pi^{-1}(i)}$. We have inside $\Aut(J \to I)$, the Galois group of the normal closure of $F/\Q.$ Restricting the character to this Galois group 
gives us a quadratic extension $\Q$; if $K = F(\sqrt{D})$, then this quadratic extension is $\Q(\sqrt{N_{F/\Q}(D)}).$ It gives an Artin motive $A$ of rank $1$ over $\Q$, 
and in the case $[K:F] = 2$ and $\dim(M)/2$ odd, we get
$$
c^+(\Res_{F/\Q}(\otimes_{K/F} M)) \ \approx \ c^-(\Res_{F/\Q}(\otimes_{K/F} M)) c^+(A).
$$

\end{proof}

\subsubsection{Motivating example-I}
Consider a projective nonsingular variety $X$ over $K$, and the variety $\prod_{K/F}X$ over $F$ given by Weil restriction of scalars. For any $F$-algebra $A$, 
one defines $(\prod_{K/F}X)(A) = X(K \otimes_F A).$ The motivating example for multiplicative induction comes from relating the cohomology of 
$\prod_{K/F}X$ and the cohomology of $X$. For example, for the Betti realizations, take $A = \C$, 
fix an embedding $\sigma : F \to \C$, and fix an ordering 
$\{\eta_1,\dots,\eta_d\}$ on the set of embeddings $K \to \C$ that restrict to $\sigma$, where $d = [K:F]$. For the $\C$-points, one has: 
$$
(\prod_{K/F}X)_\sigma (\C) \ = \ \prod_{j = 1}^d X_{\eta_j}(\C), 
$$
where, 
$(\prod_{K/F}X)_\sigma = (\prod_{K/F}X) \times_{F, \sigma} \C$ and $X_{\eta_j} := X \times_{K, \eta_j} \C.$
By the K\"unneth theorem, one has
\begin{equation}
\label{eqn:kunneth-asai}
H^p((\prod_{K/F}X)_\sigma (\C), \Q) \ = \ \bigoplus_{\sum p_j = p} \bigotimes_{j=1}^d H^{p_j}(X_{\eta_j}(\C), \Q). 
\end{equation}
Now consider the Betti--de Rham realization for the motive $H^q(X)$ over $K$ with coefficients in $\Q$. If $q$ is even, 
the Betti realization of the multiplicative induction
$\bigotimes_{K/F} H^q(X)$ is the piece in $H^{q[K:F]}((\prod_{K/F}X) (\C), \Q)$ corresponding to the summand in \eqref{eqn:kunneth-asai} with $p_1 = \cdots = p_d = q.$

\subsubsection{Motivating example-II: Asai motives}
To a Hilbert modular form $f$ for a real quadratic extension $K/\Q$, Asai \cite{asai} attached a degree-$4$ $L$-function which has all the usual analytic properties of an automorphic $L$-function 
under suitable assumptions on $f$. 
This $L$-function is now called the Asai $L$-function, or the twisted tensor $L$-function. 
As is now well-known, the Asai $L$-function is the standard $L$-function of a 
suitable automorphic form ${\rm As}(f)$ on $\GL(4)/\Q$ obtained by Langlands's principle of functoriality which on the Galois side is given by multiplicative induction: a two-dimensional irreducible representation $V$ of $\Gal(\bQ/K)$ corresponding to $f$ gives after multiplicative induction a four-dimensional representation $\As(V) = \otimes_{K/\Q} V$ of $\Gal(\bQ/\Q)$ that corresponds to ${\rm As}(f)$; see \cite{muthu}. 
The Asai $L$-function is one of the $L$-functions in the Langlands-Shahidi family of $L$-functions. 
More generally, given a quadratic extension of totally real number fields $K/F$, and given a cuspidal automorphic representation $\pi$ of $\GL_n/K$, the principle of functoriality predicts an automorphic representation ${\rm As}(\pi)$ of 
$\GL_{n^2}/F$, which is cuspidal under some regularity assumption on $\pi$. Furthermore, suppose that $\pi$ is motivic and $M = M(\pi)$ is the conjectural motive associated to $\pi$, then 
conjecturally, ${\rm As}(\pi)$ is also motivic, and one expects $\otimes_{K/\Q}(M(\pi)) \ = \ M({\rm As}(\pi)).$
The period relation in Thm.\,\ref{thm:asai-motives} then gives a motivic explanation to the rationality results for 
the ratios of critical values of Asai $L$-functions in the forthcoming \cite{muthu-raghuram}.

\medskip
\subsection{Symplectic and Orthogonal Motives}

\subsubsection{The case when the base field is $\Q$}
We consider a Betti--de Rham system of realizations $M$ over $\Q$ with coefficients in $E$. 
In the even weight case we assume the vanishing of the middle Hodge number. We assume $M$ is given a linear algebra structure $\fS$ such that 
$G(\fS)$ is contained in a group $\GSp$ of symplectic similitudes, or in the connected component $\GO^\circ$ of a group $\GO$ of orthogonal similitudes. 
The group $\GO^\circ$ is also sometimes denoted $\GSO.$

\medskip
\begin{thm}
\label{thm:gsp-go}
If the multiplier of $\sF_\infty$ is $1$, then $c^+_E(M) = c^-_E(M)$.
\end{thm}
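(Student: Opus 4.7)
My plan is to apply Theorem~\ref{thm:criterion-M-V}(ii) after reducing to a convenient minimal structure. Since $c^\pm_E(M)$ depends only on $M$ together with its $E$-coefficient structure, not on the multilinear structure used to describe it, I may replace $\fS$ by the smaller structure $\fS'$ consisting just of the $E$-module structure and the bilinear form $\omega$ that witnesses the embedding into $\GSp$ or $\GO^\circ$. Then $G(\fS')_B$ is $\GSp_E(M_B)$ or $\GSO_E(M_B)$, and in either case it is connected; so by part (ii) of the criterion it suffices to verify $\chi^+_E=\chi^-_E$ on $Z(\sF_\infty)\subset G(\fS')_B$.

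First I would check that $\dim_E M^+=\dim_E M^-$ by a Hodge-theoretic count. Under either hypothesis (odd weight, or even weight with vanishing middle Hodge number), every nonzero Hodge piece $M^{p,q}$ has $p\neq q$, so these pieces pair up under $\sF_\infty$ and on each pair $M^{p,q}\oplus M^{q,p}$ the involution $\sF_\infty$ has $\pm 1$-eigenspaces of the same $E$-dimension $h^{p,q}$. Summing gives the equality of dimensions.

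Next I would use the multiplier-$1$ condition on $\sF_\infty$. The identity $\omega(\sF_\infty v,\sF_\infty w)=\omega(v,w)$, together with $\sF_\infty$ acting by $\pm 1$ on $M^\pm$, forces $\omega(M^+,M^-)=0$ and $\omega|_{M^\pm}$ to be nondegenerate. Each $g\in Z(\sF_\infty)\cap G(\fS')_B$ therefore restricts to similitudes of $(M^\pm,\omega|_{M^\pm})$ with the same multiplier $\lambda(g)$, and the universal identity
$$
\det_E(g|_{M^\pm})^2 \;=\; \lambda(g)^{\dim_E M^\pm}
$$
gives $(\chi^+_E/\chi^-_E)(g)^2=\lambda(g)^{\dim_E M^+-\dim_E M^-}=1$. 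Thus $\chi^+_E/\chi^-_E$ is $\mu_2$-valued. It is actually trivial: in the symplectic case this follows from $\det_E=\lambda^{k}$ unambiguously on $\GSp_E$ of $E$-dimension $2k$; in the orthogonal case $\det_E(g|_{M^\pm})=\epsilon^\pm\lambda(g)^{\dim_E(M^\pm)/2}$ for signs $\epsilon^\pm\in\{\pm1\}$, and the global $\GSO$-condition $\det_E(g)=\lambda(g)^{\dim_E(M)/2}$ forces $\epsilon^+\epsilon^-=1$, so $\epsilon^+=\epsilon^-$ and the ratio is $\lambda(g)^{(\dim_E M^+-\dim_E M^-)/2}=1$.

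With $\chi^+_E=\chi^-_E$ on $Z(\sF_\infty)\subset G(\fS')_B$ and $G(\fS')_B$ connected, Theorem~\ref{thm:criterion-M-V}(ii) yields $c^+_E(M)=c^-_E(M)$ in $(E\otimes\C)^*/E^*$. The main subtlety I anticipate is the sign bookkeeping in the orthogonal case: the restriction $g|_{M^\pm}$ may a priori lie in either component of $\GO(M^\pm)$, and one must verify that the sign ambiguity cancels between $M^+$ and $M^-$. This is precisely what the $\GSO$-condition on the full space provides, and it is where the equality $\dim_E M^+=\dim_E M^-$ is used a second time.
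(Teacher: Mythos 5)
Your proof is correct and follows essentially the same route as the paper's: the multiplier-$1$ condition forces $B(M^+,M^-)=0$, so $M_{\rm B}=M^+\perp M^-$, each $g\in Z(\sF_\infty)$ restricts to similitudes of $M^\pm$ with a common multiplier, and the determinant--multiplier relation for similitudes (with the $\GSO$ determinant condition supplying the sign cancellation in the orthogonal case) gives $\chi^+_E=\chi^-_E$, whence the conclusion via Thm.\,\ref{thm:criterion-M-V}(ii).

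Two points to tighten. First, your formula $\det_E(g|_{M^\pm})=\epsilon^\pm\lambda(g)^{\dim_E(M^\pm)/2}$ presupposes that $n:=\dim_E M^\pm$ is even; in the symplectic case this is automatic, but in the orthogonal case it must be proved, and the paper does so by applying the $\GSO$ relation to $\sF_\infty$ itself: $\det(\sF_\infty)=(-1)^n$ while $\mu(\sF_\infty)^n=1$, forcing $(-1)^n=1$. (Alternatively one can avoid the issue entirely: from $\det(g^+)^2=\det(g^-)^2=\lambda(g)^n$ and $\det(g^+)\det(g^-)=\lambda(g)^n$ one gets $\det(g^+)=\det(g^-)$ directly.) Second, the structure $\fS'$ consisting of the $E$-module structure and the pairing $\omega$ alone cuts out $\GO$, which is disconnected and for which the sign argument fails; to get the connected group $\GSO$ one must also include, as the paper does, the trivialization of $\det(M)\otimes L^{\otimes -n}$ refining the trivialization of its square induced by $\omega$. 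Since you do invoke the $\GSO$ condition $\det_E(g)=\lambda(g)^{\dim_E(M)/2}$ for all $g$ in your group, the logic goes through once $\fS'$ is so enlarged.
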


\medskip


Let us elaborate our setting a bit more. One starts with $M$ of rank $2n$ over $E,$ and with an $L$ of rank $1$ over $E$. In practice (motivic case), $L$ will be $L(\chi)(m),$ 
with $L(\chi)$ the system of realizations defined by a Dirichlet character $\chi$ with values in $E$ and with $m \in \Z.$ The assumption of 
`the multiplier of $\sF_\infty$ is $1$' translates to $\chi(-1) = (-1)^m.$ 
In the symplectic case, the linear algebra structure $\fS$ is a non-degenerate alternating $E$-bilinear pairing 
$$
B  : M \otimes_E M \to L.
$$
In the orthogonal case, one has to give more than a non-degenerate symmetric $E$-bilinear pairing 
$$
B  : M \otimes_E M \to L.
$$
Such a pairing induces a similar pairing on $\det(M) := \wedge^{2n} M$: 
$$
\det(B) : \det(M)^{\otimes^2} \to L^{\otimes^{2n}}.
$$
This new pairing amounts to a triviliazation of 
$$
(\det(M) \otimes L^{\otimes^{-n}})^{\otimes^2}.
$$
The required multilinear algebra structure $\fS$ consists of the $B$, together with a trivialization of $\det(M) \otimes L^{\otimes^{-n}}$ (isomorphism with the unit object) 
inducing the above trivialization of its tensor square.

\medskip
\begin{proof}
For $x \in M^+$ and $y \in M^-$, on the one hand, one has
$B(\sF_\infty x, \sF_\infty y) = B(x, -y) = -B(x,y),$ but on the other hand one also has
$B(\sF_\infty x, \sF_\infty y) = B(x, y),$
hence, $B(x,y) = 0;$ whence, $M_B$ is the symplectic or orthogonal direct sum of $M^+$ and $M^-.$  

\smallskip

We have to show that $Z(\sF_\infty)$ acts with the same determinant on $M^+$ and $M^-$. The dimension $n$ of 
$M^+$ and $M^-$ is even. In the symplectic case, because $M^+$ and $M^-$ are symplectic. In the 
orthogonal case, the connected component of the group of similitudes is given by 
$$
\det(g) = {\rm multiplier}(g)^n,
$$
forcing $n$ to be even since the multiplier of $\sF_\infty$ is $1$ and determinant $(-1)^n.$ 

\smallskip

An element $g$ in $G(\fS)$ centralizing $\sF_\infty$ is the sum of an $g^+$ acting on $M^+$ and $g^-$ acting on $M^-$. In the symplectic case, $g^+$ and $g^-$ are symplectic 
similitudes with the same multipliers, hence the same determinant
$$
\det(g^\pm) = {\rm multiplier}(g^\pm)^{n/2}.
$$

\smallskip

In the orthogonal case, they are orthogonal similitudes with the same multiplier. The ratios 
$$
\det(g^\pm)/{\rm multiplier}(g^\pm)^{n/2} \in \{\pm 1\}, 
$$
but $\det(g)/{\rm multiplier}(g)^{n} = 1$ means that they are equal: for some sign $\epsilon$, one has
$$
\det(g^\pm) = {\rm multiplier}(g^\pm)^{n/2}\cdot\epsilon,
$$
proving that $\det(g^+) = \det(g^-).$
\end{proof}

\medskip
\subsubsection{The case when the base field is totally real}
\label{sec:symp-orth-tot-real}
Thm.\,\ref{thm:gsp-go} will continue to hold for $M$ and $L$ Betti--de Rham systems of realizations over a totally real field $F$. The parity assumption is now that for 
each complex embedding $\sigma$ of $F$, the involution $\sF_\infty$ of $L_\sigma$ is trivial. In the motivic case, one will want $L$ to be $L(\chi)(m)$ for $\chi$ an 
abelian character of $\Gal(\overline{F}/F)$ with values in $E^*$, and the parity assumption is that for each infinite Frobenius $\sF_\infty$ one has 
$\chi(\sF_\infty) = (-1)^m.$
The relevant group $G(\fS)$ for $\Res_{F/\Q}(M)$ is now an extension of the symmetric group $S_T,$ of the set $T$ of complex embeddings of $F$, by 
the product of the (connected components) of the groups of similitudes of the $M_\sigma$'s. The group $Z(\sF_\infty)$ is the extension of $S_T$ by the product 
of the centralizers. The proof that the action of this product of centralizers on 
$$
\Res_{F/\Q}(M)^+ = \prod M_\sigma^+, \quad {\rm and} \quad \Res_{F/\Q}(M)^- = \prod M_\sigma^-, 
$$
have the same determinant is as before. It remains to compute a ratio of determinants on the quotient $S_T$. For this, we may first extend the field of coefficients to 
split the forms $B$ on $M_\sigma$. One can then find basis $e^\pm_{i, \sigma}$ of the $M_\sigma^\pm$ and representatives for elements of $S_T$ 
which for each $(\pm, i)$, permute the $e^\pm_{i, \sigma}$. As $n$ is even, for such a choice both determinants are trivial.

\medskip
\subsubsection{Example: The tensor product of two rank-two motives-II}
\label{sec:prod-rank-two-motives-2}
We revisit the example discussed in \ref{sec:prod-rank-two-motives-1}. Using the notations therein, if $\varphi \in S_k(\Gamma_0(N), \omega)$ is a primitive cuspform and 
$M(\varphi)$ the associated rank-two motive, then $M(\varphi)$ is symplectic but does not satisfy the parity condition in the hypothesis of Thm.\,\ref{thm:gsp-go}. The symplectic
form $B$ is an alternating form $B_\varphi : M(\varphi) \otimes M(\varphi) \to \Q(\omega)(1-k).$ 
Since the Dirichlet character $\omega$ is 
the nebentypus character of a weight $k$ modular form, one has $\omega(-1) = (-1)^k$. Hence $\sF_\infty$ acts on the rank-one motive $\Q(\omega)(1-k)$ via 
$\omega(-1)\cdot(-1)^{1-k} = -1.$ In particular, Thm.\,\ref{thm:gsp-go} does not apply to $M(\varphi)$--but this is expected since the two periods for a general (non-CM) cuspform 
$\varphi \in S_k(\Gamma_0(N), \omega)$ are expected to be algebraically independent. 

\smallskip

Now suppose, for $j = 1,2$ we have $\varphi_j \in S_{k_j}(\Gamma_0(N_j), \omega_j).$ Assume $k_1 \neq k_2.$ The orthogonal form 
$B_{\varphi_1} \otimes B_{\varphi_2}$ on the tensor product motive $M(\varphi_1) \otimes M(\varphi_2)$ takes values in the rank-one motive $\Q(\omega_1\omega_2)(2-k_1-k_2)$, on which 
the action of $\sF_\infty$ is via $(\omega_1\omega_2)(-1)\cdot(-1)^{2-k_1-k_2}  = 1,$ satisfying the hypothesis of the theorem. It remains to check that $G(\fS)$ is contained in the connected 
component $\GO^\circ$. This is a special case of the following general lemma.

\begin{lem}
Let $(V_1, B_1)$ and $(V_2,B_2)$ be nondegenerate symplectic vector spaces over a field $F$. Then $V = V_1\otimes_F V_2$ equipped with $B = B_1 \otimes B_2$ is a nondegenerate 
orthogonal space over $F$. Under the canonical map $\GL(V_1) \times \GL(V_2) \to \GL(V_1 \otimes V_2)$, the image of 
$\GSp(V_1,B_1) \times \GSp(V_2,B_2)$ is contained inside $\GSO(V, B).$  (In particular, $\GL(2) \times \GL(2)$ maps inside $\GSO(4).$)
\end{lem}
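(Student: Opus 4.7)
\medskip
\noindent\textbf{Proof plan for the lemma.} The plan is to check the three assertions (symmetry, nondegeneracy, and landing in the connected component $\GSO$) in order, with the only nontrivial step being a determinant calculation at the end.

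First I would verify that $B = B_1 \otimes B_2$ is a well-defined symmetric bilinear form: on pure tensors
$$
B(v_1 \otimes v_2,\, w_1 \otimes w_2) \ = \ B_1(v_1, w_1)\, B_2(v_2, w_2),
$$
and since each $B_i$ is alternating, swapping $(v_1 \otimes v_2)$ with $(w_1 \otimes w_2)$ contributes a sign $(-1)(-1) = +1$. Nondegeneracy of $B$ follows from nondegeneracy of $B_1$ and $B_2$: choosing symplectic bases of $V_1$ and $V_2$, the Gram matrix of $B$ on the tensor basis is the Kronecker product of the Gram matrices of $B_1$ and $B_2$, which is invertible.

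Next I would check that $(g_1, g_2) \in \GSp(V_1, B_1) \times \GSp(V_2, B_2)$ with multipliers $\mu_1, \mu_2$ maps to an orthogonal similitude of $(V, B)$ with multiplier $\mu_1 \mu_2$. This is immediate from
$$
B\bigl((g_1 \otimes g_2)(v_1 \otimes v_2),\, (g_1 \otimes g_2)(w_1 \otimes w_2)\bigr)
 = \mu_1\, \mu_2\, B_1(v_1, w_1)\, B_2(v_2, w_2).
$$
So the image lies in $\GO(V, B)$; it remains to show it lies in the identity component $\GSO(V, B)$.

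The key step is to check the determinant condition defining $\GSO$. Write $\dim V_i = 2n_i$, so that $\dim V = 4 n_1 n_2$; the condition defining $\GSO(V, B) \subset \GO(V, B)$ on a space of even dimension $2n$ is $\det(g) = \mu(g)^n$, i.e., here $\det(g) = \mu(g)^{2 n_1 n_2}$. For a symplectic similitude with multiplier $\mu_i$, one has $\det(g_i) = \mu_i^{n_i}$. Therefore
$$
\det(g_1 \otimes g_2) \ = \ \det(g_1)^{\dim V_2}\, \det(g_2)^{\dim V_1}
 \ = \ \mu_1^{2 n_1 n_2}\, \mu_2^{2 n_1 n_2}
 \ = \ (\mu_1 \mu_2)^{2 n_1 n_2},
$$
which is exactly $\mu(g_1 \otimes g_2)^{2 n_1 n_2}$. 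This places the image in $\GSO(V, B)$, finishing the lemma. The main (and only) obstacle is keeping track of the two exponents in the determinant computation; once the formulas $\det(g_i) = \mu_i^{n_i}$ and $\det(A \otimes B) = \det(A)^{\dim W} \det(B)^{\dim V}$ are lined up, the identity falls out.
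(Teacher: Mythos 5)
Your proposal is correct and follows essentially the same route as the paper: compute the multiplier of $g_1\otimes g_2$ and compare $\det(g_1\otimes g_2)=\det(g_1)^{\dim V_2}\det(g_2)^{\dim V_1}$ with $\mu(g_1\otimes g_2)^{\dim(V)/2}$. The only (immaterial) difference is that you invoke the exact identity $\det(g_i)=\mu_i^{n_i}$ for symplectic similitudes, whereas the paper gets by with the weaker $\det(g_i)^2=\mu_i^{2n_i}$, which suffices because the exponents $2n_2$ and $2n_1$ in the tensor-product determinant formula are even.
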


\begin{proof}
Suppose the dimension of $V_1$ is $2n_1$ and 
$\GSp(V_1,B_1)$ has a similitude character $\mu_1$ then for $g_1 \in \GSp(V_1,B_1)$ one has $\det(g_1)^2 = \mu_1(g_1)^{2n_1}.$ Similarly, 
for $g_2 \in \GSp(V_2,B_2),$ one has $\det(g_2)^2 = \mu_2(g_2)^{2n_2}.$ For the element $g_1\otimes g_2$ acting on $(V,B)$, 
the similitude character $\mu$ and the determinant are given by
$\mu(g_1\otimes g_2) = \mu_1(g_1)\mu_2(g_2)$ and $\det(g_1\otimes g_2) = \det(g_1)^{2n_2} \det(g_2)^{2n_1},$ respectively. 
One checks that $\det(g_1 \otimes g_2) \cdot \mu(g_1\otimes g_2)^{\dim(V_1\otimes V_2)/2} = 1$ which implies that $g_1 \otimes g_2 \in \GSO(V, B).$
\end{proof}

\medskip
\subsubsection{Example: The conjectural motive attached to representations of $\OO(2n)$}
In this paragraph we discuss the relation of Thm.\,\ref{thm:gsp-go} with the main result of \cite{bhagwat-raghuram}. Suppose $\OO(2n)$ is a split orthogonal group
over $\Q$, and $\sigma$ is a cuspidal automorphic representation 
of $\OO(2n)(\A_\Q)$ satisfying the conditions: 
\begin{enumerate}
\item[(i)] the Arthur parameter $\Psi$ of $\sigma$ is a cuspidal automorphic representation of $\GL(2n)/\Q$;
\item[(ii)] $\sigma$ is globally generic with respect to some Whittaker datum $\psi$;
\item[(iii)] $n$ is even, and the $\sigma_\infty$ is a discrete series representation of $\OO(n,n)$ which is locally generic with respect to $\psi_\infty.$
\end{enumerate} 
We will see below that Thm.\,\ref{thm:gsp-go} applies to the conjectural motive $M$ attached to $\sigma$ giving us then a motivic explanation
of the main theorem of \cite{bhagwat-raghuram} recalled in \eqref{eqn:BR}.

\smallskip

The motive is attached not so much as to $\sigma$, but rather to its Arthur parameter $\Psi$, which by (i) is a cuspidal automorphic representation of $\GL(2n)/\Q$. 
However, $\Psi$ is not a representation that is of motivic type, in as much as it is not algebraic, and as in \cite{clozel}, one remedies the situation by considering: 
$$
\Psi^T := \Psi \otimes |\!|\ |\!|^{1/2}
$$
a half-integral Tate-twist of $\Psi$ (one may also consider $\Psi \otimes |\!|\ |\!|^{m/2}$ for any odd integer $m$) which is of motivic type. Let $M = M(\Psi^T)$ be the conjectural rank $2n$ motive associated to $\Psi^T.$ For the expected details of this dictionary between motives and automorphic forms, the reader is referred to \cite[Chap.\,7]{harder-raghuram-book}. 

\smallskip

To get the Hodge types of $M$, recall from \cite[2.2.8]{bhagwat-raghuram} that the Langlands parameter of the discrete series representation $\sigma_\infty$, which by definition of 
the transfer $\sigma \mapsto \Psi$ is the same as the Langlands parameter $\varphi(\Psi_\infty)$ of $\Psi_\infty,$ is of the form:
$\oplus_{j=1}^nI(\chi_{\ell_j})$, for a decreasing sequence of even positive integers $\ell_1 > \ell_2 > \cdots > \ell_n$, 
where $\chi_\ell$ is the character of $\C^*$ that maps $z$ to $(z/\bar{z})^{\ell/2}$, and $I(\chi_\ell)$ is its induction to a $2$-dimensional irreducible 
representation of the Weil group $W_\R$ of $\R$. We can read off the exponents of the characters of $\C^*$ appearing in $\varphi(\Psi^T_\infty)$. One aspect of the dictionary 
$\Psi^T \leftrightarrow M$ is that if $(p,q)$ is a Hodge type for $M$ if and only if $z^{-p}\bar{z}^{-q}$ appears in $\varphi(\Psi^T_\infty \otimes |\!| \ |\!|^{(1-2n)/2}$). One concludes that 
the Hodge types of $M(\Psi^T)$ are of the form:
$$
\left\{ \left( \frac{\pm \ell_j+2n-2}{2}, \frac{\mp\ell_j+2n-2}{2} \right) \right\}_{1\leq j \leq n}.
$$
The purity weight of the motive is 
$w = w(M) = 2n-2.$ 

\smallskip

Next, we look at $L$-functions. One has the following relations:
$L(s, \sigma) \ = \ L(s, \Psi),$
i.e., the degree-$2n$ $L$-function of $\sigma$ on $\OO(2n)$ corresponding to the inclusion $\OO(2n,\C) \hookrightarrow \GL(2n, \C)$ of dual groups, 
is the standard $L$-function $L(s, \Psi)$ attached to $\Psi.$ Of course, by definition of Tate-twist, one also has 
$L(s, \Psi^T) \ = \ L(s + 1/2, \Psi). $
Another aspect of the dictionary $\Psi^T \leftrightarrow M$ is that 
$L(s, \Psi^T \otimes |\!| \ |\!|^{(1-2n)/2}) \ = \ L(s, M).$ Hence, $L(s, \Psi) = L(s + n-1, M).$ The same relations hold for local and global $L$-functions. One deduces 
for the symmetric square $L$-functions the relation $L(s, \Sym^2, \Psi) = L(s + 2n-2, \Sym^2(M)).$
Since $\Psi$ is a transfer from $\OO(2n)$, it is part of the Arthur classification, that $L(s, \Sym^2, \Psi)$ has a pole at $s = 1$, or that $L(s, \Sym^2(M))$ has a pole at $s = 2n-1.$ 
This happens when the Tate motive $\Q(2-2n)$ appears in $\Sym^2(M)$; in other words, we have an orthogonal structure $\fS$: 
$$
M \otimes M \to \Q(2-2n).
$$

If $g \in G(\fS)$ the group preserving $\fS$, then its multiplier is 
$$
\mu(g) = |\!| g |\!|^{2-2n}. 
$$ 
Also, the multiplier of $\sF_\infty$ on $\Q(2-2n)$ is $(-1)^{2-2n} = 1$ satisfying the parity condition in Thm.\,\ref{thm:gsp-go}. 
The determinant of $M$ is the central character of $\Psi^T \otimes |\!| \ |\!|^{(1-2n)/2}$. The central character of $\Psi$ is trivial since it is a transfer from 
the split orthogonal group, hence the determinant character of $M$ is 
$$
\det(M)(g) = |\!| g |\!|^{(2-2n)n}.
$$
It follows that $\det(M)(g) \mu(g)^{-n} = 1$, i.e., $G(\fS)$ is contained in $\GSO(M)$. Hence, Thm.\,\ref{thm:gsp-go} applies to the motive $M$ as claimed.

\bigskip

\noindent{\it Acknowledgements:} The second author is grateful to the Charles Simonyi Endowment that funded his membership at the Institute for Advanced Study, Princeton, during the Spring and Summer terms of 2018, when the authors got started on this project; he also acknowledges the warm hospitality of the IAS during several subsequent short visits.

\bigskip

\end{document}